\def\inte#1{
\displaystyle\mathop{#1\kern0pt}^\circ }
\let\pa=\partial
\def\cR{{\mathcal R}}
\def\pa{\partial}
\def\l{\frak l}
\def\virgp{\raise 2pt\hbox{,}}
\def\cdotpv{\raise 2pt\hbox{;}}
\def\eqdefa{\buildrel\hbox{\footnotesize def}\over =}
\def\C{\mathop{\mathbb C\kern 0pt}\nolimits}
\def\DD{\mathop{\mathbb D\kern 0pt}\nolimits}
\def\EE{\mathop{{\mathbb E \kern 0pt}}\nolimits}
\def\K{\mathop{\mathbb K\kern 0pt}\nolimits}
\def\N{\mathop{\mathbb N\kern 0pt}\nolimits}
\def\Q{\mathop{\mathbb Q\kern 0pt}\nolimits}
\def\R{\mathop{\mathbb R\kern 0pt}\nolimits}
\def\SS{\mathop{\mathbb S\kern 0pt}\nolimits}
\def\ZZ{\mathop{\mathbb Z\kern 0pt}\nolimits}
\def\TT{\mathop{\mathbb T\kern 0pt}\nolimits}
\def\P{\mathop{\mathbb P\kern 0pt}\nolimits}
\def\no{\noindent}
\newcommand{\beq}{\begin{equation}}
\newcommand{\eeq}{\end{equation}}
\newcommand{\ben}{\begin{eqnarray}}
\newcommand{\een}{\end{eqnarray}}
\newcommand{\beno}{\begin{eqnarray*}}
\newcommand{\eeno}{\end{eqnarray*}}
\newtheorem{defi}{Definition}[section]
\newtheorem{thm}{Theorem}[section]
\newtheorem{lem}{Lemma}[section]
\renewcommand{\theequation}{\thesection.\arabic{equation}}
\newtheorem{remark}{Remark}[section]
\newtheorem{lemma}{Lemma}[section]
\newtheorem{cor}{Corollary}[section]
\newtheorem{theorem}{Theorem}[section]
 \def\fr#1#2{{\textstyle \frac{#1}{#2}}}
\newdimen\eqjot \eqjot = 1\jot
\def\openupeq{\openup \the\eqjot}
\def\qaeq#1#2{{\def\\{&}\vcenter{\openupeq\halign{$\displaystyle
   ##\hfil$&&\hskip#1pt$\displaystyle##\hfil$\cr #2\cr}}}}
\def\qeq{\qaeq{20}}
\def\pofbox#1 #2$#3${\setbox0=\hbox{$#3$}\ht0=0pt\dp0=0pt\wd0=0pt\hskip-#1pt\raise#2pt\box0\hskip#1pt}
\begin{document}

\title[Blow-up and peakons to a generalized $\mu$-Camassa-Holm equation]
{Blow-up solutions and peakons to a generalized $\mu$-Camassa-Holm integrable equation}

\author{Changzheng Qu}
\address{Changzheng Qu\newline
Department of Mathematics, Ningbo University, Ningbo 315211, P. R.
China} \email{quchangzheng@nbu.edu.cn}
\author{Ying Fu}
\address{Ying Fu\newline
Department of Mathematics\\
Northwest University\\
Xi'an 710069\\
P. R. China} \email{fuying@nwu.edu.cn}
\author{Yue Liu}
\address{Yue Liu\newline
 Department of Mathematics, University of Texas, Arlington, TX 76019-0408} \email{yliu@uta.edu}

 \maketitle

\begin{abstract}
Consideration here is  a generalized $\mu$-type integrable equation, which can be regarded as a generalization to both the $\mu$-Camassa-Holm and modified $\mu$-Camassa-Holm equations. It is shown that the proposed equation is formally integrable with the Lax-pair and the bi-Hamiltonian structure and its  scale limit is an integrable model of hydrodynamical systems describing short capillary-gravity waves. Local well-posedness of the Cauchy problem in the suitable Sobolev space is established by the viscosity method. Existence of peaked traveling-wave solutions and formation of singularities of solutions for the equation are investigated. It is found that the equation admits a single peaked soliton and multi-peakon solutions. The effects of varying $\mu$-Camassa-Holm and  modified $\mu$-Camassa-Holm nonlocal nonlinearities on blow-up criteria and wave breaking   are illustrated in detail. Our  analysis  relies on the method of characteristics and conserved quantities and is proceeded with a priori differential estimates.
\end{abstract}

\maketitle

\noindent {\sl Keywords\/}: $\mu$-Camassa-Holm equation, modified $\mu$-Camassa-Holm equation,
integrable system, blow-up, wave breaking, peakons.

\vskip 0.2cm

\noindent {\sl AMS Subject Classification} (2000): 35B30,35G25  \\

\renewcommand{\theequation}{\thesection.\arabic{equation}}
\setcounter{equation}{0}

\section{Introduction}
In this paper, we are concerned with the following new integrable partial
differential equation
\begin{equation}\label{m-m-mu-ch}
m_t+k_1\big(2\mu(u)u-u^2_x)m\big)_x+k_2(2mu_x+um_x)+\gamma u_x=0,
\end{equation}
where $u(t,x)$ is a function of time $t$ and a single spatial variable
$x$, and
 \begin{equation}\label{mm}
  m=\mu(u)-u_{xx},\quad \mu(u)=
\int_{\Bbb S}u(t,x)dx,
\end{equation}
with ${\Bbb S}={\Bbb R}/{\Bbb Z}$ which denotes the unit circle on ${\Bbb
R}^2$. Eq.\eqref{m-m-mu-ch} is  reduced to the $\mu$-Camassa-Holm (CH) equation \cite{klm}
\begin{equation}\label{mu-ch}
m_t+2mu_x+um_x + \gamma u_x =0,
\end{equation}
for $k_1=0$ and $k_2=1,$ and the modified $\mu$-CH equation \cite{qfl}
\begin{equation}\label{m-mu-ch}
m_t+\big((2\mu(u)u-u^2_x)m\big)_x + \gamma u_x =0,
\end{equation}
for $k_2=0$ and  $k_1=1$, respectively.

It is known that the CH equation \cite{ch, ff} of the following form
\begin{equation}\label{ch}
m_t + u m_x + 2 u_x m+\gamma u_x=0,\quad\text{with}\quad m=u-u_{xx}
\end{equation}
was proposed as a model for the unidirectional propagation of the
shallow water waves over a flat bottom (see also \cite{cl, joh}),
with $u(x,t)$ representing the height of the water's free surface in
terms of non-dimensional variables. It is completely integrable with
a bi-Hamiltonian structure and an infinite number of conservation laws \cite {ch,
ff}. It is of interest to note that the CH equation can also be
derived by tri-Hamiltonian duality from the KdV equation (a number
of additional examples of dual integrable systems derived applying
the method of tri-Hamiltonian duality can be found in
\cite{fuc,or}). The CH equation has two remarkable features:
existence of peakon and multi-peakons \cite{achm, ch,cht} (when $ \gamma
= 0$) and breaking waves, i.e., wave profile remains bounded while
its slope becomes unbounded in finite time \cite{con1,con2,
ce-1,ce-2,ce-3,lo}. Geometrically, the CH equation describes the geodesic flows on the Bott-Virasoro
group for the case $\gamma\not=0$ \cite{mis,sch} and on the
diffeomorphism group of the unit circle under $H^1$ metric for the
case $\gamma=0$ \cite{kou}, respectively. Note that the two geometric descriptions
are not analogous: for $ \gamma  = 0 $ (on the diffeomorphism group)
the Riemannian exponential map is a local chart, but this is not the
case for $\gamma \neq 0$ (on the Bott-Virasoro group) cf. the
discussion in \cite {ckkt}. The CH equation also arises
from a non-stretching invariant planar curve flow in the
centro-equiaffine geometry \cite{cq,olv}. Well-posedness and wave
breaking of the CH equation were studied extensively, and many
interesting results have been obtained, see \cite{con1, ce-1, ce-2,
ce-3, lo}, for example. The $\mu$-CH equation \eqref{mu-ch} was first introduced
by Khesin, Lenells and Misio\l ek \cite{klm}. They verified that the $\mu$-CH equation is bi-Hamiltonian and admits
cusped and smooth traveling wave solutions. In \cite{lmt}, it was shown that the $\mu$-CH equation also admits single peakon and multi-peakons. Interestingly, the $\mu$-CH equation describes a geodesic flow on diffeomorphism
group over ${\Bbb S}$ with certain metric. Its well-posedness and blow up were investigated in \cite{flq,klm}.

Another well-known integrable equation admitting peakons with quadratic nonlinearities is the
Degasperis-Procesi (DP) equation \cite{dp}, which takes the form
\begin{equation}\label{dp}
m_t+um_x+3u_xm=0, \;\; m=u-u_{xx}.
\end{equation}
It is regarded as a model for nonlinear shallow water dynamics, which can also be obtained from the governing
equations for water waves \cite{cl}. Its asymptotic accuracy is the same as for the Camassa-Holm shallow
water equation. More interestingly, it admits
the shock peakons in both periodic \cite{ely-2} and non-periodic
settings \cite{ly}. Wave breaking phenomena and global existence of
solutions for the Degasperis-Procesi equation were investigated in
\cite{ck,dhh, ely-1,ely-2,ly,lun}. The $\mu$-version of the DP equation
was introduced by Lenells, Misio\l ek and Ti\u{g}lay \cite{lmt}, which takes the form \eqref{dp}
with $m$ replaced by $m=\mu(u)-u_{xx}$. Its integrability,
well-posedness, blow up and existence of peakons were investigated in \cite{flq,
lmt}.

It is  noted that all nonlinear terms in the CH and DP equations are quadratic.
So it is of great interest to find such integrable equations with
cubic and higher-order nonlinear terms. The CH equation can be obtained
by the tri-Hamiltonian duality approach from the bi-Hamiltonian structure of the KdV equation \cite{or}. Similarly, the approach
applied to the modified KdV equation then yields the so-called modified CH equation \cite{or,fok,qia}
\begin{equation}\label{m-ch}
m_t+\big((u^2-u^2_x)m\big)_x=0,\quad m=u-u_{xx},
\end{equation}
which has cubic nonlinearities, and is integrable with the  Lax-pair and the bi-Hamiltonian structure.
It was shown that the modified CH equation has a single peaked soliton and multi-peakons with a different character than those of the CH equation \cite{gloq},
and it also has new features of blow-up criterion and wave breaking mechanism. The $\mu$-version of the modified CH equation, called the modified $\mu$-CH equation
\begin{equation}\label{m-mu-ch}
m_t+\big((2u\mu(u)-u^2_x)m\big)_x=0,\quad m=\mu(u)-u_{xx},
\end{equation}
was introduced in \cite{qfl}. It is also formally integrable with the bi-Hamiltonian structure and the Lax-pair, and arises from a non-stretching planar curve flows in ${\mathbb R}^2$ \cite{qfl}. On the other hand, its local well-posedness, wave breaking, existence of peakons and their stability
were discussed in \cite{qfl,lqz}. As an extension of both the CH and modified CH equations, an integrable equation with both quadratic and cubic nonlinearities has been introduced by Fokas \cite{fok}, which takes the form
\begin{equation}\label{fokas-ch}
m_t+k_1 \big((u^2-u_x^2))m\big)_x+k_2(2u_xm+um_x)=0,\quad m=u-u_{xx}.
\end{equation}
It was shown by Qiao et al \cite{qxl} that Eq.\eqref{fokas-ch} is  integrable with the Lax-pair and the bi-Hamiltonian structure. Its peaked solitons were also obtained in \cite{qxl}. Indeed, Eq.\eqref{fokas-ch} can be obtained by the tri-Hamiltonian duality approach from the bi-Hamiltonian structure of the Gardner equation
\begin{equation}\label{garder-eq}
m_t+k_1 u^2u_x+k_2 uu_x=0.
\end{equation}
It was noticed in \cite{gloq} and \cite{qfl} that the short-pulse equation  as an approximation of the Maxwell equation \cite{sw},
\begin{equation}\label{sp-eq}
v_{xt}+\frac 12 (v^2v_x)_x+v=0
\end{equation}
is a scale limit of both the modified CH equation \eqref{m-ch} and the modified $\mu$-CH \eqref{m-mu-ch} equation.

The goal of the present  paper is to investigate whether or not the generalized $\mu$-version \eqref{m-m-mu-ch}  with  two different nonlocal nonlinearities has  similar remarkable properties as the CH equation \eqref{ch} and the modified CH equation \eqref{m-ch} as well as their $\mu$-versions. As is well known, in order for persistent structures and properties to exist in such equations, there must be a delicate nonlinear/nonlocal balance, even in the absence of linear dispersion with $ \gamma = 0.$  Indeed, it is shown in Theorem \ref{thm-peakon-1} that  Eq.\eqref{m-m-mu-ch}  admits a single peaked soliton and multi-peakon solutions even the parameters $ k_1 $ and $ k_2 $ have  a different sign.

On the other hand, it is found that those two parameters $ k_1 $ and $ k_2 $ corresponding to  the modified $\mu$-CH cubic and $\mu$-CH quadratic nonlinearities respectively play an important role in the effects of varying nonlocal nonlinearites on the finite blow-up of solutions, or wave breaking. It is well known that wave breaking relies crucially on strong nonlinear and nonlocal dispersion. Indeed, in the case of the CH equation with the quadratic nonlinearity, the wave {\it cannot} break in finite time when the  momentum potential $ m = (1 - \partial_x^2) u $ keeps positive initially \cite{con1}.  The breaking wave, however, occurs for the modified CH equation with the cubic nonlinearity, even  $ m $ is positive initially \cite {gloq}. Those of key different features of wave breaking make our investigation on the wave breaking to the generalized $\mu$-version  \eqref{m-m-mu-ch} more interesting but the analysis more  challenging.

In view of the transport theory, the key issue to obtain the wave breaking result (Theorem \ref{thm-7.2}) is to derive a priori differential estimate so that the slope $ k_1 u_x m + k_2 u_x $ is unbounded below. Rather, the nonlocal nature and mixed strong nonlinearites of the problem prohibit the use of classical blow-up approaches. To this end, we shall adopt the
method of characteristics with conservative quantities and the new mechanism of a priori differential inequalities to control higher nonlocal nonlinearities. This new approach is expected to have more applications to deal with wave breaking of the other  nonlinear dispersive model wave equations with higher nonlocal nonlinearities.

The outline of the paper is as follows. In Section 2, integrability of Eq.\eqref{m-m-mu-ch} is illuminated.
Section 3 is devoted to some basic facts and results, which will be used in the subsequent
sections. In Section 4, existence of the single peakon and multi-peakons of Eq.\eqref{m-m-mu-ch} are demonstrated. Local well-posedness in the Sobolev space to the Cauchy problem associated with Eq.\eqref{m-m-mu-ch} is then obtained in Section 5.  Blow-up criterion for strong solutions is established in Section 6. Finally in Section 7, with a new blow-up mechanism, sufficient conditions for wave breaking of strong solutions in finite time are described in detail.

\renewcommand{\theequation}{\thesection.\arabic{equation}}
\setcounter{equation}{0}

\section{Integrability and conservation laws}
In this section, we study integrability of
Eq.\eqref{m-m-mu-ch}. On one hand, Eq.\eqref{m-m-mu-ch} admits a bi-Hamiltonian structure, and it is formally integrable. Indeed, it can be written in the bi-Hamiltonian form
\begin{equation*}
m_t=J\frac{\delta H_1}{\delta m}=K\frac{\delta H_2}{\delta m},
 \end{equation*}
where
\begin{equation*}
J=-k_1\,\partial_x\, m\,\partial_x^{-1}\,m\,\partial_x -k_2(m\partial_x+\partial_x m)-\frac 12 \gamma \partial_x, \qquad {\rm and}
\qquad K=-\partial A=\partial_x^3,
\end{equation*}
are compatible Hamiltonian operators, while
\begin{equation}\label{cons-01}\begin{array}{c}
H_1=\frac 12 \int_{{\Bbb S}}mu \,dx, \qquad \text{and}
\\[0.3cm]
H_2=\int_{{\Bbb S}} \left
(\mu^2(u)u^2+\mu(u)u u^2_x-\frac 1{12}u^4_x \right) \, dx+k_2\int_{{\Bbb S}}(\mu(u)u+\frac 12 uu_x^2)dx
\end{array}
\end{equation}
are the corresponding Hamiltonian functionals. According to  Magri's
theorem  \cite{olv}, the associated recursion operator $\cR
= J\cdot K^{-1}$ produces a hierarchy of commuting Hamiltonian
functionals and  bi-Hamiltonian flows.

On the other hand, Eq.\eqref{m-m-mu-ch} admits the following Lax-pair
\begin{align*}
\begin{pmatrix} \psi_1\\\psi_2\end{pmatrix}_x=U(m)\begin{pmatrix}
\psi_1\\\psi_2\end{pmatrix}, \qquad \quad\begin{pmatrix}
\psi_1\\\psi_2\end{pmatrix}_t=V(m,u,\lambda)\begin{pmatrix}
\psi_1\\\psi_2\end{pmatrix},
\end{align*}
where $U$ and $V$ are given by
\begin{align*}
U(m,\lambda)=\begin{pmatrix}0 & \lambda \,m\\[0.2cm]-k_1 \lambda \,m-k_2 \lambda &
0\end{pmatrix},
\end{align*}
and
\begin{align*}
V(m,u,\lambda)=\begin{pmatrix}-\frac 12 u_x
& -\frac {\mu(u)}{2\lambda} -k_1 \lambda (2\mu(u)u-u^2_x)m-k_2 \lambda um \\[.2cm]
G & \frac 12 k_2 u_x\end{pmatrix},
\end{align*}
for $\gamma=0$ with
\begin{eqnarray*}
\begin{aligned}
G=&\;k_2\big(\frac 1{2\lambda}+\lambda k_2u\big)+\frac 1{2\lambda}k_1 \mu(u)+k_1^2\lambda (2u\mu(u)-u_x^2)m\\
&+k_1k_2\lambda \big(2u\mu(u)-u_x^2-um\big),
\end{aligned}
\end{eqnarray*}
and
\begin{align*}
U(m,\lambda)=\begin{pmatrix}-\sqrt{-\frac 12 \gamma}\lambda  & \lambda \,m\\
-k_1 \lambda \,m-k_2 \lambda &
\sqrt{-\frac 12 \gamma}\lambda\end{pmatrix},
\end{align*}
and
\begin{align*}
V(m,u,\lambda)=\begin{pmatrix}A
& B \\
G & -A\end{pmatrix},
\end{align*}
for $\gamma\not=0$ with \begin{eqnarray*}
&&A=\frac 14 \sqrt{-2\gamma}\big(\lambda^{-1}+2\lambda k_1(2u\mu(u)-u_x^2)+2\lambda k_2 u\big)-\frac 12 k_2 u_x,\\
&&B= -\frac {\mu}{2\lambda}+\frac 12 \sqrt{-2\gamma}u_x-k_1\lambda (2u\mu(u)-u_x^2)m-\lambda k_2 um.
\end{eqnarray*}

Let us now consider a scale limit of Eq.\eqref{m-m-mu-ch}. Applying the scaling transformation
$$\qeq{x\longmapsto \epsilon\, x,\\t\longmapsto \epsilon^{-1} t,\\ u\longmapsto \epsilon^2 u}$$
to Eq.\eqref{m-m-mu-ch} produces
\begin{eqnarray}\label{m-m-mu-ch-1}
\begin{aligned}
&(\epsilon^2 \mu(u)-u_{xx})_t+k_1 \big((2\epsilon^2 u\mu(u)-u_x^2)(\epsilon^2
\mu(u)-u_{xx})\big)_x\\
&+k_2\big(2u_x(\epsilon^2 \mu(u)-u_{xx})+u (\epsilon^2 \mu(u)-u_{xx})_x \big)+\gamma u_x=0.
\end{aligned}
\end{eqnarray}
Expanding
$$u(t,x)= u_0(t,x) + \epsilon\, u_1(t,x)  + \epsilon^2\, u_2(t,x) + \> \cdots $$
in powers of the small parameter $\epsilon$, the leading order term $u_0(t,x)$ satisfies
\begin{equation*}
-u_{0,xxt}+k_1 (u_{0,x}^2u_{0,xx})_x-k_2 \big(u_{0,x}u_{0,xx}+u_0u_{0,xxx} \big)+ \gamma u_{0,x}=0.
\end{equation*}
It follows that $v=u_{0,x}$ satisfies the following integrable equation
\begin{equation*}
v_{xt} -k_1 v_x^2v_{xx}+k_2 (vv_{xx}+\frac 12 v_x^2)-\gamma v=0,
\end{equation*}
which describes asymptotic dynamics of a short capillary-gravity wave \cite{fmn}, where $v(t,x)$ denotes the fluid velocity on the surface.

\renewcommand{\theequation}{\thesection.\arabic{equation}}
\setcounter{equation}{0}
\section{Preliminaries}

In this section, we are concerned with the following Cauchy problem
of Eq.\eqref{m-m-mu-ch} with $\gamma=0$
\begin{equation}\label{cauchy}
\left\{
 \begin{array}{ll}
\begin{split}
&m_t+k_1\big(2\mu(u)u-u^2_x)m\big)_x+k_2 (2mu_x+um_x)=0, \quad t>0,  \quad x \in \mathbb{R}, \\
&u(0,x)=u_0(x), \quad m=\mu(u)-u_{xx},\quad x \in \mathbb{R},\\
&u(t,x+1)=u(t,x),\quad t \ge 0, \quad x\in\mathbb{R}.
 \end{split}
\end{array} \right.
\end{equation}

In the following, for a given Banach space $Z$, we denote its norm
by $\|\cdot\|_Z$. Since all space of functions are over
$\mathbb{S}$, for simplicity, we drop $\mathbb{S}$ in our notations
of function spaces if there is no ambiguity. Various positive
constants will be uniformly denoted by common letters $c$ or $C$, which are
only allowed to depend on the initial data $u_0(x)$.

The notions of strong (or classical) and weak solutions will be
needed.

\begin{defi}\label{def-strong-solution}
If $u \in C([0, T]; H^{s}) \cap C^1([0, T]; H^{s-1})$ with
$s>\frac{5}{2}$ and some $T>0$ satisfies \eqref{cauchy}, then $u$ is
called a strong solution of \eqref{m-m-mu-ch} on $[0, T]$. If
$u(t,x)$ is a strong solution on $[0, T]$ for every $T >
0$, then it is called a global strong solution.
\end{defi}

Plugging $m$ in terms of $u$ into Eq.\eqref{m-m-mu-ch} results
in the following fully nonlinear partial differential equation:
\begin{eqnarray}\label{cauchy-1}
\begin{aligned}
&&u_t+ k_1\left [(2u\mu(u) -\fr1{3}u^2_x )u_x+\partial_x A^{-1}
(2\mu^2(u)u+\mu(u) u_x^2)+\fr{1}{3}\mu(u_x^3)\right]\\
&&+k_2\left[ uu_x+A^{-1}\partial_x(2u\mu(u)+\fr 12 u_x^2)  \right]
=0,
\end{aligned}
\end{eqnarray}
where $A=\mu-\partial_x^2$ is a linear operator, defined by $A(u)=\mu(u)-u_{xx}$, $A^{-1}$ is its inverse operator. Recall that \cite{klm}
\begin{equation}\label{g}
u=A^{-1}m= g\ast m,
\end{equation}
where $g$ is the Green function of the operator $A^{-1}$, given by
\begin{equation}\label{green}
g(x)=\fr{1}{2} (x-\fr 1{2})^2+\fr {23}{24}.
\end{equation}
Its derivative can be assigned to zero at $x=0$, so one has \cite{lmt}
\begin{eqnarray}\label{green-derivative}
g_x(x)\eqdefa\left\{\begin{array}{cc}0, &\; x=0,\\
x-\fr 1{2},&\; 0<x<1.
\end{array}\right.
\end{eqnarray}

The above formulation allows us to define a weak solution as
follows.

\begin{defi}\label{def-gws-3-1}
Given the initial data $u_{0} \in W^{1,3}$, a function $u \in
L^{\infty}([0, T), W^{1,3})$ is said to be a weak solution to the
Cauchy problem \eqref{cauchy}
if it satisfies the following identity:
\begin{eqnarray}\label{weaksol}
\begin{aligned}
&\int_0^{T}\int_{\mathbb{S}}\Big[u\,\varphi_{t}+k_1 \big(\mu(u)
u^2\varphi_{x} +\fr{1}{3}u_{x}^3\, \varphi -g_x\ast \left(2\mu^2(u)u
+\mu(u) u_x^2\right)\varphi\\&\qquad -\fr {1}{3}
\mu(u_x^3)\varphi \big)+k_2\big(\fr 12 u^2\varphi_x-g_x\ast (2u\mu(u)+\fr 12 u_x^2 )\varphi\big)\Big] \,dx\,dt\\&\qquad+\int_{\mathbb{S}}u_{0}(x)\,\varphi(0, x)\, dx=0,
\end{aligned}
\end{eqnarray}
for any smooth test function $\varphi(t,x)\in C^{\infty}_{c}([0,T
)\times {{\Bbb S}})$.  If $u$ is a weak solution on $[0, T)$ for
every $T
>0$, then it is called a global weak solution.
\end{defi}

The local well-posedness to the Cauchy problem \eqref{cauchy} will be established in the next section.

Denote
\begin{equation}\label{mu-1}
\mu_0=\mu(u(t))=\int_{\Bbb S}u(t,x)\ dx,\quad\mu_1=\left(\int_{\Bbb
S}u^2_x(t,x) d x\right)^{\frac1{2}}.
\end{equation}
It is easy to see from equation \eqref{m-mu-ch} and the conservation
law $H_1$ that $\mu(u)$ and $\mu_1$ are conserved. Furthermore, we
have the following result.

\begin{lemma}\label{sobolev-3}\cite{flq}
Let $u_0\in H^s$, $s>5/2$, and $u\in C([0,T); H^s)\cap C^1([0,T);
H^{s-1})$ is a solution of the Cauchy problem \eqref{cauchy}.
Then the following inequality holds:
\begin{equation}\label{inequality-2}
\|u(t,\cdot)-\mu(u(t,\cdot)) \|_{L^{\infty}}\le \frac{\sqrt{3}}{6}\mu_1.
\end{equation}
\end{lemma}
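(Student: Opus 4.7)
The plan is to exploit the fact that $v(t,\cdot) := u(t,\cdot) - \mu(u(t,\cdot))$ is a smooth periodic function on $\mathbb{S} = \mathbb{R}/\mathbb{Z}$ with vanishing mean, and to apply a weighted Cauchy--Schwarz in Fourier space to obtain the sharp $H^1 \hookrightarrow L^\infty$ embedding constant for zero-mean periodic functions. The right-hand side $\mu_1$ is time-independent by the conservation of $H_1$, so it suffices to prove the pointwise inequality at each fixed $t \in [0,T)$.

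First, for fixed $t$, since $u(t,\cdot) \in H^s$ with $s > 5/2$ the function $v(t,\cdot)$ is continuous, periodic, and satisfies $\int_{\mathbb S} v(t,x)\,dx = 0$. Thus it admits a Fourier series
\begin{equation*}
v(t,x) = \sum_{n \in \mathbb{Z} \setminus \{0\}} c_n(t)\, e^{2\pi i n x},
\end{equation*}
with no constant term. Parseval's identity gives
\begin{equation*}
\mu_1^2 = \|u_x(t,\cdot)\|_{L^2}^2 = \|v_x(t,\cdot)\|_{L^2}^2 = \sum_{n \neq 0} 4\pi^2 n^2\, |c_n(t)|^2.
\end{equation*}

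Second, I would apply the triangle inequality followed by Cauchy--Schwarz with the natural weight $1/(2\pi|n|)$:
\begin{equation*}
|v(t,x)| \leq \sum_{n \neq 0} |c_n(t)| = \sum_{n \neq 0} \frac{1}{2\pi |n|} \cdot 2\pi|n|\, |c_n(t)| \leq \Bigl(\sum_{n \neq 0} \frac{1}{4\pi^2 n^2}\Bigr)^{1/2} \|v_x(t,\cdot)\|_{L^2}.
\end{equation*}
The prefactor evaluates to $\frac{1}{2\pi^2}\sum_{n=1}^\infty \frac{1}{n^2} = \frac{1}{2\pi^2}\cdot \frac{\pi^2}{6} = \frac{1}{12}$, so the constant is $\sqrt{1/12} = \frac{1}{2\sqrt{3}} = \frac{\sqrt{3}}{6}$, matching the claim.

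I do not anticipate a genuine obstacle here: the content of the lemma is essentially the sharp Sobolev embedding for mean-zero periodic functions, and the identification of the extremal constant via Basel's $\zeta(2)$ value is routine. The only point requiring a moment's care is to check that $v(t,\cdot)$ has the claimed Fourier representation with absolutely summable coefficients (justifying the pointwise bound), which follows from $v(t,\cdot) \in H^s$, $s > 5/2 > 1/2$. An alternative, Fourier-free route would be to write $v = g_x \ast v_x$ using the antiderivative of the mean-zero projection of the Dirac mass on $\mathbb{S}$ and invoke a direct Cauchy--Schwarz; I would only fall back to this if the Fourier presentation clashed with the rest of the paper's notation.
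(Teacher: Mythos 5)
Your argument is correct and yields exactly the stated constant: $\bigl(\sum_{n\neq 0}(2\pi n)^{-2}\bigr)^{1/2}=\bigl(\tfrac{2}{4\pi^{2}}\zeta(2)\bigr)^{1/2}=\tfrac{1}{\sqrt{12}}=\tfrac{\sqrt{3}}{6}$. Note, however, that the paper does not prove this lemma at all; it imports it from \cite{flq}, and the proof there is precisely the ``Fourier-free route'' you sketch as a fallback: since $(\mu-\partial_x^2)g=\delta$ and $\mu(g)=1$, one has $u-\mu(u)=-g\ast u_{xx}=-g_x\ast u_x$, and Cauchy--Schwarz in the convolution integral together with $\|g_x\|_{L^2}^2=\int_0^1(x-\tfrac12)^2\,dx=\tfrac{1}{12}$ gives \eqref{inequality-2}. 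Your Fourier computation is the exact Parseval dual of this: the weight $1/(2\pi|n|)$ is, up to sign, the $n$-th Fourier coefficient of $g_x$, and the Basel sum is just $\|g_x\|_{L^2}^2=1/12$ read off in frequency space. Both routes give the same sharp constant --- equality is attained at $u=g$, i.e.\ at the peakon profile, where $\max|g-1|=\tfrac1{12}$ and $\|g_x\|_{L^2}=1/\sqrt{12}$ --- so the difference is presentational; the convolution form avoids the (minor) need to justify absolute summability of the Fourier series, although, as you observe, the Cauchy--Schwarz step itself supplies that. One cosmetic remark: conservation of $\mu_1$ is not actually needed, since \eqref{inequality-2} is a pointwise-in-time inequality with $\mu_1$ evaluated at the same instant on both sides; the conservation only matters later, when the lemma is used to produce time-uniform bounds in the blow-up arguments.
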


\begin{lemma}\label{l2.1}\cite{kat}
Let $r,t$ be real numbers such that $-r<t\le r$. Then
$$\|fg\|_{H^t}\le c\|f\|_{H^r}\|g\|_{H^t},\quad if\quad r>1/2,$$
$$\|fg\|_{H^{r+t-\frac1{2}}}\le c\|f\|_{H^r}\|g\|_{H^t},\quad if\quad r<1/2,$$
where $c$ is a positive constant depending on $r$ and $t$.
\end{lemma}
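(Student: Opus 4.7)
The plan is to prove the estimate by Bony's paraproduct decomposition combined with a Littlewood-Paley analysis. Writing
\[
fg \;=\; T_f g + T_g f + R(f,g),
\]
where $T_f g = \sum_j S_{j-1} f\,\Delta_j g$ and $T_g f$ are the two paraproducts (low-times-high) and $R(f,g) = \sum_{|j-k|\le 1}\Delta_j f\,\Delta_k g$ is the remainder (high-times-high), I would estimate each of the three pieces separately in the target Sobolev norm. The two hypotheses $t\le r$ and $-r<t$ correspond respectively to the paraproducts and to the remainder, so they will enter in two conceptually different places.

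For case (i), where $r>1/2$, the embedding $H^r\hookrightarrow L^\infty$ is available. Since each block $S_{j-1}f\,\Delta_j g$ has Fourier support in an annulus of size $2^j$, almost-orthogonality together with $\|S_{j-1}f\|_{L^\infty}\le C\|f\|_{H^r}$ yields $\|T_f g\|_{H^t}\le C\|f\|_{H^r}\|g\|_{H^t}$. The symmetric piece $T_g f$ is where $t\le r$ enters: if $t>1/2$ we again use $H^t\hookrightarrow L^\infty$ on $g$, and if $t\le 1/2$ we use Bernstein to produce a factor $2^{j(1/2-t)}$, which is absorbed into $\|f\|_{H^{1/2}}\le C\|f\|_{H^r}$ after the $\ell^2$ summation. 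The remainder $R(f,g)$ has Fourier support only in a ball of size $2^j$, so one estimates $\|\Delta_n R(f,g)\|_{L^2}$ shell-by-shell, producing a Schur-type convolution on $\ell^2$ whose summability requires exactly $r+t>0$.

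For case (ii), where $r<1/2$ (and hence $t\le r<1/2$), the $L^\infty$ embedding fails and must be replaced by Bernstein's inequality, $\|S_{j-1}f\|_{L^\infty}\le C\,2^{j(1/2-r)}\|f\|_{H^r}$ and similarly for $g$. The same decomposition applies, but each low-frequency factor now costs $2^{j(1/2-r)}$ (resp.\ $2^{j(1/2-t)}$), and this loss is precisely the source of the target-regularity drop from $t$ to $r+t-\tfrac12$. Once more the remainder $R(f,g)$ is summable because $r+t>0$.

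The main technical obstacle is the remainder $R(f,g)$. Because the spectral support of each high-high block is only contained in a ball (not a separated annulus), one cannot simply distribute the output derivative onto a single factor; the argument collapses to a Schur test whose geometric-series convergence fails as $t\to -r$, so the strict inequality $-r<t$ is essential. The excluded endpoints $r=\tfrac12$, and likewise $t=\tfrac12$ in case (i), cause a logarithmic divergence that would need to be handled through a Besov $B^{1/2}_{2,1}$ refinement; away from these points everything reduces to Littlewood-Paley orthogonality together with Cauchy-Schwarz/Young on $\ell^2$.
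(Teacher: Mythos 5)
The paper does not prove this lemma at all: it is imported verbatim from Kato's lecture notes \cite{kat} (it is the classical Sobolev multiplication estimate in one dimension), so there is no in-paper argument to compare against. Your paraproduct proof is correct and is the standard modern route, essentially the one in Bahouri--Chemin--Danchin, which the authors cite for the neighbouring Lemmas 3.3--3.5. Kato's original argument is genuinely different and more elementary: one works entirely on the Fourier side, writing $\widehat{fg}=\hat f*\hat g$, invoking Peetre's inequality $\langle\xi\rangle^{t}\le C\langle\xi-\eta\rangle^{|t|}\langle\eta\rangle^{t}$ to shift the weight onto the two factors, and closing with Cauchy--Schwarz (or a Schur test) using that $\langle\cdot\rangle^{-2r}\in L^{1}$ exactly when $r>1/2$; the loss of $1/2$ a derivative in the case $r<1/2$ comes out of Young's inequality in the same computation. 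What the Fourier-convolution route buys is brevity and no Littlewood--Paley machinery; what your decomposition buys is a clean separation of where each hypothesis enters ($t\le r$ for the paraproducts, $r+t>0$ for the remainder) and immediate generalizability to Besov norms. Your identification of the remainder $R(f,g)$ as the term responsible for the strict inequality $-r<t$ is exactly right.

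One small correction: the endpoint $t=\tfrac12$ in case (i) is \emph{not} excluded by the lemma (only $r=\tfrac12$ is), and your proof does in fact cover it without any Besov refinement. The logarithmic loss in $\|S_{j-1}g\|_{L^{\infty}}\le Cj^{1/2}\|g\|_{H^{1/2}}$ is multiplied in the $T_{g}f$ term by the geometric factor $2^{j(1/2-r)}$ coming from $\|f\|_{H^{r}}$ with $r>\tfrac12$ strictly, and $j^{1/2}2^{j(1/2-r)}$ is bounded, so the $\ell^{2}$ summation closes. You should state this explicitly rather than deferring it, since otherwise your argument appears to prove a strictly weaker statement than the one claimed.
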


\begin{lemma}\label{l2.3}\cite{bcd,dan}
If $r>0$, then $H^r\cap L^{\infty}$ is a Banach algebra. Moreover
$$\|fg\|_{H^r}\le c(\|f\|_{L^{\infty}}\|g\|_{H^r}+\|f\|_{H^r}\|g\|_{L^{\infty}}),$$
where $c$ is a constant depending only on $r$.
\end{lemma}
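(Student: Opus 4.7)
The plan is to prove the product estimate by means of the Littlewood--Paley decomposition and Bony's paraproduct decomposition, which is the standard route taken in Bahouri--Chemin--Danchin and Danchin's book. Write $f = \sum_{j\ge -1} \Delta_j f$ using dyadic frequency projections $\Delta_j$ and the low-frequency cutoffs $S_{j} = \sum_{k\le j-1}\Delta_k$. Decompose the product via Bony's formula
\begin{equation*}
fg = T_f g + T_g f + R(f,g),
\end{equation*}
where $T_f g = \sum_j S_{j-1}f \,\Delta_j g$ is the paraproduct and $R(f,g) = \sum_{|j-k|\le 1}\Delta_j f\, \Delta_k g$ is the remainder. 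The Banach-algebra claim will be immediate from the estimate once both factors are assumed to lie in $H^r\cap L^\infty$.

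For the paraproducts, I would first note that each block $S_{j-1}f\,\Delta_j g$ is spectrally localized in a dyadic annulus of size $\sim 2^j$, so that by the almost-orthogonality of the blocks together with $\|S_{j-1}f\|_{L^\infty}\le C\|f\|_{L^\infty}$ (by Bernstein),
\begin{equation*}
\|T_f g\|_{H^r}^2 \le C\sum_{j} 2^{2jr}\,\|S_{j-1}f\|_{L^\infty}^2\|\Delta_j g\|_{L^2}^2
\le C\|f\|_{L^\infty}^2\|g\|_{H^r}^2.
\end{equation*}
The symmetric bound $\|T_g f\|_{H^r}\le C\|g\|_{L^\infty}\|f\|_{H^r}$ is obtained identically. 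These two pieces already reproduce the right-hand side of the claimed inequality.

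The main obstacle, and the only place where the hypothesis $r>0$ is genuinely used, is the remainder term $R(f,g)$, because each block $\Delta_j f\,\Delta_k g$ (with $|j-k|\le 1$) is supported in a ball of radius $\sim 2^j$ rather than an annulus, so many blocks contribute to a single low-frequency piece $\Delta_\ell R(f,g)$. I would estimate, for each $\ell$,
\begin{equation*}
\|\Delta_\ell R(f,g)\|_{L^2} \le C\sum_{j\ge \ell-N}\|\Delta_j f\|_{L^\infty}\|\Delta_j g\|_{L^2}
\le C\|f\|_{L^\infty}\sum_{j\ge \ell-N}\|\Delta_j g\|_{L^2},
\end{equation*}
and then absorb the factor $2^{\ell r}$ into the sum by writing $2^{\ell r} = 2^{(\ell-j)r}\,2^{jr}$. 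Because $r>0$, the sequence $(2^{(\ell-j)r})_{j\ge \ell-N}$ is $\ell^1$-summable, so Young's inequality for series gives
\begin{equation*}
\|R(f,g)\|_{H^r} \le C\|f\|_{L^\infty}\|g\|_{H^r},
\end{equation*}
and by symmetry the mirror bound in terms of $\|g\|_{L^\infty}\|f\|_{H^r}$. Summing the three contributions yields the stated Moser-type inequality; taking $f=g$ produces the Banach-algebra property of $H^r\cap L^\infty$ for $r>0$. The delicate point to watch is precisely the failure of this argument when $r\le 0$, which is why the hypothesis $r>0$ cannot be relaxed.
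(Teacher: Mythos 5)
Your proof is correct, and it is essentially the argument of the cited references: the paper itself gives no proof of this lemma, quoting it directly from Bahouri--Chemin--Danchin and Danchin, where exactly this Bony-decomposition argument (paraproduct bounds valid for all $r$, remainder bound requiring $r>0$ via Young's inequality for the convolution of $\ell^1$ and $\ell^2$ sequences) is used. The only cosmetic imprecision is in the remainder estimate, where the sum should run over pairs $|j-k|\le 1$ rather than the diagonal $k=j$ alone; this changes nothing.
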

\begin{lemma}\label{l2.4}\cite{bcd,dan}
If $r>0$, then
$$\|[\Lambda^r,f]g\|_{L^2}\le c(\|\partial_x f\|_{L^{\infty}}\|\Lambda^{r-1}g\|_{L^2}
+\|\Lambda^r f\|_{L^2}\|g\|_{L^{\infty}}),$$ where $c$ is a constant
depending only on $r$.
\end{lemma}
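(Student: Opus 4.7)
The plan is to prove this classical Kato--Ponce--type commutator estimate by Littlewood--Paley techniques, following the Bony paraproduct decomposition. Let $\{\Delta_j\}_{j\ge -1}$ be a dyadic partition of unity, $S_j = \sum_{k\le j-1}\Delta_k$, and write Bony's decomposition
\begin{equation*}
fg = T_f g + T_g f + R(f,g), \qquad T_f g := \sum_j S_{j-1}f\,\Delta_j g, \qquad R(f,g):=\sum_{|j-k|\le 1}\Delta_j f\,\Delta_k g.
\end{equation*}
Applying the same decomposition to $f\Lambda^r g$, the commutator splits as
\begin{equation*}
[\Lambda^r,f]g \;=\; [\Lambda^r,T_f]g \;+\; \bigl(\Lambda^r T_g f - T_{\Lambda^r g} f\bigr) \;+\; \bigl(\Lambda^r R(f,g) - R(f,\Lambda^r g)\bigr).
\end{equation*}

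First I would handle $[\Lambda^r,T_f]g = \sum_j [\Lambda^r,S_{j-1}f]\Delta_j g$. A first-order Taylor expansion of the Fourier symbol of $\Lambda^r$ together with Bernstein's inequality shows that each commutator gains one derivative on $f$ at the cost of one frequency factor on $\Delta_j g$, yielding the pointwise bound
\begin{equation*}
\bigl\|[\Lambda^r,S_{j-1}f]\Delta_j g\bigr\|_{L^2} \lesssim \|\partial_x f\|_{L^\infty}\, 2^{j(r-1)}\|\Delta_j g\|_{L^2}.
\end{equation*}
Almost orthogonality of the Littlewood--Paley blocks and the characterization $\|\Lambda^{r-1}g\|_{L^2}^2 \sim \sum_j 2^{2j(r-1)}\|\Delta_j g\|_{L^2}^2$ then control this piece by $\|\partial_x f\|_{L^\infty}\|\Lambda^{r-1}g\|_{L^2}$.

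For the second piece, $\Lambda^r T_g f - T_{\Lambda^r g}f = \sum_j (\Lambda^r(S_{j-1}g\,\Delta_j f) - S_{j-1}\Lambda^r g\,\Delta_j f)$, I would bound $S_{j-1}g$ by $\|g\|_{L^\infty}$, use spectral localization of $\Delta_j f$ (so that $\Lambda^r$ acts essentially as multiplication by $2^{jr}$), and combine with $\|\Lambda^r f\|_{L^2}^2 \sim \sum_j 2^{2jr}\|\Delta_j f\|_{L^2}^2$ to obtain the $\|g\|_{L^\infty}\|\Lambda^r f\|_{L^2}$ contribution. The remainder $\Lambda^r R(f,g)-R(f,\Lambda^r g)$ is treated symmetrically: since its Fourier support lies in a ball of size $2^j$ whenever both $\Delta_j f$ and $\Delta_k g$ interact with $|j-k|\le 1$, Bernstein's inequality transfers $\Lambda^r$ onto the $f$--factor, producing again $\|g\|_{L^\infty}\|\Lambda^r f\|_{L^2}$.

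The main obstacle is tracking the endpoint bookkeeping in the low--frequency/high--frequency interactions of the paraproducts when $r$ is small, where one must replace direct use of Bernstein's inequality by the first-order commutator identity for $\Lambda^r$ and verify that the summation in $j$ remains absolutely convergent; this is what forces the sharp split of one derivative onto $f$ (through $\|\partial_x f\|_{L^\infty}$) and the remaining $r-1$ derivatives onto $g$. Summing the three pieces yields the stated estimate.
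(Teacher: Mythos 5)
The paper offers no proof of this lemma at all --- it is quoted from \cite{bcd,dan} as a known Kato--Ponce--type commutator estimate --- so there is nothing in the paper to compare your argument against; I can only assess the sketch on its own terms. Your decomposition and your treatment of the first two pieces are the standard ones and are sound: $[\Lambda^r,T_f]g$ is handled by the first-order expansion of the symbol of $\Lambda^r$, which is legitimate there precisely because $S_{j-1}f$ sits at much lower frequency than $\Delta_j g$; and both $\Lambda^r T_g f$ and $T_{\Lambda^r g}f$ consist of blocks supported in \emph{annuli} of radius $\sim 2^j$, so the per-block bound $2^{jr}\|g\|_{L^\infty}\|\Delta_j f\|_{L^2}$ (the inequality $\|S_{j-1}\Lambda^r g\|_{L^\infty}\lesssim 2^{jr}\|g\|_{L^\infty}$ uses $r>0$) resums by almost orthogonality to $\|g\|_{L^\infty}\|\Lambda^r f\|_{L^2}$.

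The genuine gap is in the remainder piece. The blocks of $R(f,\Lambda^r g)=\sum_j \widetilde{\Delta}_j f\,\Delta_j \Lambda^r g$ are supported in \emph{balls} $B(0,C2^j)$, not annuli. Your per-block bound $2^{jr}\|\widetilde{\Delta}_j f\|_{L^2}\|g\|_{L^\infty}$ is correct, but ball-supported blocks can only be resummed in $H^\sigma$ for $\sigma>0$; at the output regularity $\sigma=0$ required here one falls back on the triangle inequality, which needs $(2^{jr}\|\Delta_j f\|_{L^2})_j\in\ell^1$, i.e.\ $f\in B^r_{2,1}\subsetneq H^r$. This is not mere bookkeeping: the abstract remainder bound $\|R(a,b)\|_{L^2}\lesssim \|a\|_{H^r}\|b\|_{B^{-r}_{\infty,\infty}}$ is \emph{false} at this endpoint (take $a=N^{-1/2}\sum_{k\le N}e^{i2^kx}$ and $b=\sum_{k\le N}e^{-i2^kx}$, whose product piles mass of size $\sqrt{N}$ onto frequency zero), so no argument using only $\|\Lambda^r g\|_{B^{-r}_{\infty,\infty}}\lesssim\|g\|_{L^\infty}$ can close this term. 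The ``first-order commutator identity'' you invoke to repair the endpoint does not apply here either, since it requires the modulating factor to be at much lower frequency than the differentiated one --- exactly what fails in the balanced high-high interaction. To finish one must exploit $g\in L^\infty$ beyond its $B^0_{\infty,\infty}$ norm, e.g.\ via the Coifman--Meyer bilinear multiplier theorem applied to the symbol $\langle\xi+\eta\rangle^r-\langle\eta\rangle^r$ on the region $|\xi|\sim|\eta|$ (this is Kato and Ponce's original route), or a duality/square-function argument for the low-frequency output of the high-high interaction; with that ingredient supplied, your outline becomes a complete proof.
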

The following estimates for solutions to the one-dimensional transport equation have been used in \cite{dan,gl}.

\begin{lemma}\cite{bcd,dan}\label{lemtrans}
Consider the one-dimensional linear
transport equation
\begin{equation}\label{transport}
\partial_{t}f + v \,\partial_x f= g,\quad   f|_{t=0}=f_0.
\end{equation}
Let $0 \leq \sigma <1$, and suppose  that
\begin{align*}
f_{0}\in H^{\sigma}, &\quad
 g \in L^1([0, T]; H^{\sigma}),\\
 v_x \in L^1([0, T]; L^{\infty}),&\quad
 f \in L^{\infty}([0, T]; H^{\sigma})\cap C([0, T];
\mathcal{S}^{\prime}).
\end{align*}
Then $f \in C([0, T];H^{\sigma})$. More precisely, there
exists a constant $C$ depending only on $ \sigma$ such that, for every $0<t\leq T$,

\begin{equation*}
\begin{split}
\|f(t)\|_{H^{\sigma}}  \leq & e^{CV(t)}
\Big(\|f_0\|_{H^{\sigma}}+C\int_{0}^{t}\|g(\tau)\|_{H^{\sigma}}d\tau\Big)
\quad\hbox{\it with} \quad V(t)= \int_{0}^{t} \|\partial_x v(\tau)\|_{L^{\infty}}\,d\tau.
\end{split}
\end{equation*}
\end{lemma}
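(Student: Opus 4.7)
The plan is to establish the estimate by a Littlewood--Paley block analysis: localize the transport equation in frequency, derive an $L^2$ energy estimate for each dyadic piece, control the resulting commutator by Bony paradifferential calculus, sum with the Sobolev weight, and close with Gr\"onwall's inequality. Continuity in time is then obtained by a regularization argument.

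First I would apply the dyadic block $\Delta_q$ to \eqref{transport} to obtain
\begin{equation*}
\partial_t \Delta_q f + v\,\partial_x \Delta_q f \;=\; \Delta_q g + R_q, \qquad R_q \;=\; [v,\Delta_q]\partial_x f,
\end{equation*}
and then take the $L^2$ scalar product with $\Delta_q f$. An integration by parts in the transport term (using only $\partial_x v \in L^1_T(L^\infty)$) yields
\begin{equation*}
\frac{1}{2}\frac{d}{dt}\|\Delta_q f\|_{L^2}^2 \;\leq\; \Bigl(\tfrac{1}{2}\|\partial_x v\|_{L^\infty}\|\Delta_q f\|_{L^2} + \|\Delta_q g\|_{L^2} + \|R_q\|_{L^2}\Bigr)\|\Delta_q f\|_{L^2}.
\end{equation*}
After dividing by $\|\Delta_q f\|_{L^2}$ and integrating in time, multiplying by $2^{q\sigma}$, squaring, summing in $q$, and using Minkowski's inequality, one arrives at
\begin{equation*}
\|f(t)\|_{H^\sigma} \;\leq\; \|f_0\|_{H^\sigma} + \int_0^t \|g(\tau)\|_{H^\sigma}\,d\tau + C\int_0^t \|\partial_x v(\tau)\|_{L^\infty}\|f(\tau)\|_{H^\sigma}\,d\tau,
\end{equation*}
provided one has the commutator bound
\begin{equation*}
\Bigl(\sum_q 2^{2q\sigma}\|R_q\|_{L^2}^2\Bigr)^{1/2} \;\leq\; C\,\|\partial_x v\|_{L^\infty}\,\|f\|_{H^\sigma}.
\end{equation*}
The conclusion $\|f(t)\|_{H^\sigma} \leq e^{CV(t)}\bigl(\|f_0\|_{H^\sigma} + C\int_0^t \|g\|_{H^\sigma}\bigr)$ then follows from Gr\"onwall's lemma.

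The main obstacle is the commutator estimate, which is where the hypothesis $0\leq\sigma<1$ becomes essential. I would handle it by decomposing $v\,\partial_x f = T_v \partial_x f + T_{\partial_x f} v + R(v,\partial_x f)$ via Bony's paraproduct. The paraproduct $T_v \partial_x f$ contributes the generic commutator $[T_v,\Delta_q]\partial_x f$, which is estimated by a first-order Taylor expansion of the convolution kernel and Bernstein's inequality, producing a factor $2^{-q}\|\partial_x v\|_{L^\infty}$ that precisely compensates the derivative on $f$. The remaining terms $T_{\partial_x f} v$ and $R(v,\partial_x f)$ are controlled using the embedding $L^\infty \hookrightarrow B^0_{\infty,\infty}$ and the fact that $\sigma < 1$ guarantees the remainder sums absolutely with a square-summable sequence $(c_q)$ of unit $\ell^2$ norm. (For $\sigma \geq 1$ this scheme breaks down and an additional derivative on $v$ is required.)

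Finally, to upgrade the a priori bound to the continuity $f \in C([0,T];H^\sigma)$, I would regularize the data by mollification: replace $v,\,g,\,f_0$ by $v^\varepsilon,\,g^\varepsilon,\,f_0^\varepsilon \in C^\infty$, obtain smooth solutions $f^\varepsilon$ satisfying the same estimate uniformly in $\varepsilon$, pass to the limit $\varepsilon\to 0$ in the distributional sense using uniqueness of the transport equation, and combine strong convergence with the weak-$*$ bound to identify the limit with $f$ and recover time continuity.
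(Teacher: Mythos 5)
The paper does not prove this lemma; it is quoted verbatim from the cited references \cite{bcd,dan}, and your argument is precisely the standard proof given there: dyadic localization of the transport equation, the $L^2$ energy estimate on each block with the antisymmetry of $v\partial_x$ costing only $\|\partial_x v\|_{L^\infty}$, the Bony-decomposition commutator estimate (valid exactly because $0\le\sigma<1$), Gr\"onwall, and a mollification argument for time continuity. Your sketch is correct and takes essentially the same route as the source the paper relies on, so there is nothing to add beyond the routine care needed when dividing by $\|\Delta_q f\|_{L^2}$ (replace it by $(\|\Delta_q f\|_{L^2}^2+\varepsilon)^{1/2}$ and let $\varepsilon\to0$).
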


 As for the periodic case, thanks to Lemma 3.9 in \cite{cll}, we know that for every $u\in H^1({\mathbb S})$
\begin{align*}
\|u\|^2_{\mu}\leq\|u\|^2_{H^1}\leq3\|u\|^2_{\mu}
\end{align*}
where
$$\|u\|^2_{\mu}=\left. ((\mu-\partial^2_x)u,u\right. )_{L^2}
=\mu^2(u)+\int_{{\Bbb S}}u^2_x\ dx,$$ which implies that the norm $\|\cdot\|_{\mu}$ is equivalent to the norm $\|\cdot\|_{H^1}$.

\renewcommand{\theequation}{\thesection.\arabic{equation}}
\setcounter{equation}{0}

\section{Peaked solutions}

It is known that a remarkable property to the CH and $\mu$-CH equations is the existence of single and multi-peakons. In this section, we are concerned with the existence of single and multi-peakons to Eq.\eqref{m-m-mu-ch}. Recall
first that the single peakon and multi-peakons for $\mu$-CH equation and modified $\mu$-CH equation. Their single peakons are given respectively by \cite{klm}
\begin{eqnarray}\label{mu-ch-peakon-1}
u(t, x) =\frac{12}{13} c g(x-ct),
\end{eqnarray}
and \cite{qfl}
\begin{eqnarray}\label{m-mu-ch-peakon-1}
u(t, x) =\frac{2\sqrt{3c}}{5} g(x-ct),
\end{eqnarray}
where
\begin{eqnarray*}
g(x)=\frac 12 (x-[x]-\frac 12)^2+\frac {23}{24}
\end{eqnarray*}
with $[x]$ denoting the largest integer part of $x$.
Their multi-peakons are given by
\begin{eqnarray}\label{mu-ch-peakon-2}
u(t, x) = \sum\limits_{i=1}^N p_i(t)g(x-q^i(t)),
\end{eqnarray}
where $p_i(t)$ and $q^i(t)$ satisfy the following ODE system respectively for the $\mu$-CH equation \cite{lmt}
\begin{eqnarray*}
\begin{aligned}
&\dot{p}_i(t) =-\sum\limits_{j=1}^Np_ip_j g_x(q^i-q^j),\\
&\dot{q}^i(t) =\sum\limits_{j=1}^Np_jg(q^i(t)-q^j(t)),\;\; i=1,2,\cdots, N,\\
\end{aligned}
\end{eqnarray*}
and for the modified $\mu$-CH equation \eqref{m-m-mu-ch} \cite{qfl}
\begin{eqnarray*}
\begin{aligned}
\dot{p}_i(t) =&0,\\
\dot{q}^i(t) =&\frac 1{12} \Big[\sum\limits_{j,k\not=i}^N(p_j+p_k)^2+25 p_i^2\Big]+p_i\Big[\sum\limits_{j\not=i}^N p_j\big((q^i-q^j+\frac 12 \lambda_{ij})^2+\frac {49}{12}\big)\Big]\\
&+\sum\limits_{j<k, j,k\not=i}^N p_jp_k(q^j-q^k+\epsilon_{jk})^2,
\end{aligned}
\end{eqnarray*}
where $g_x$ ia defined by \eqref{green-derivative}, and
\begin{eqnarray}\label{lam-eps}
\begin{aligned}
&\lambda_{ij}=\left\{\begin{array}{c}1,\quad i<j\\
-1,\quad i>j,
\end{array}\right.\\
&\epsilon_{jk}=\left\{\begin{array}{c}1,\quad k-j\geq 2\\
0,\quad k-j\leq 1,
\end{array}\right.
\end{aligned}
\end{eqnarray}

Recall that the single peakon of the CH equation \eqref{ch} and the modified CH equation \eqref{m-ch} are given respectively by \cite{ch}
\begin{eqnarray*}
u=ce^{-|x-ct|}
\end{eqnarray*}
and \cite{gloq}
\begin{eqnarray*}
u=\sqrt{\frac{3c}{2}}e^{-|x-ct|}.
\end{eqnarray*}
Their periodic peakons are given respectively by \cite{con2,qll}
\begin{eqnarray*}
u=\frac{c}{\cosh(\frac 12)}\cosh \big( \frac 12 -(x-ct)+[x-ct]\big)
\end{eqnarray*}
and \cite{qll}
\begin{eqnarray*}
u=\sqrt{\frac {3c}{1+2\cosh^2(\frac 12 )}}\cosh \big( \frac 12 -(x-ct)+[x-ct]\big).
\end{eqnarray*}
Their multi-peakons are given by \cite{ch,cht}
\begin{eqnarray*}
u=\sum\limits_{i=1}^Np_i(t)e^{-|x-q^i(t)|},
\end{eqnarray*}
where $p_i(t)$ and $q^i(t)$ satisfy the system respectively for the CH equation \eqref{ch} with $\gamma=0$ \cite{cht}
\begin{eqnarray*}
\begin{aligned}
&\dot{p}_i=\sum\limits_{j=1}^Np_ip_j {\rm sign}(q^i-q^j) \;e^{-|q^i-q^j|},\\
&\dot{q}^i=\sum\limits_{j=1}^Np_je^{-|q^i-q^j|},
\end{aligned}
\end{eqnarray*}
and the system for the modified CH equation \eqref{m-ch} \cite{gloq}
\begin{eqnarray*}
\begin{aligned}
&\dot{p}_i=0,\\
&\dot{q}^i=\frac 23 p_i^2+2\sum\limits_{j= 1}^Np_ip_j e^{-|q^i-q^j|}+4\sum\limits_{1\leq j,j<i\leq N}p_k p_ie^{-|q^i-q^k|}.
\end{aligned}
\end{eqnarray*}

The existence of the single peakons of Eq.\eqref{m-m-mu-ch} is governed by the following result.
\begin{thm}\label{thm-peakon-1}
For any $c\geq -\fr {169k_2^2}{1200k_1}$, Eq.\eqref{m-m-mu-ch} admits
the peaked periodic-one traveling wave solution $u_c=\phi_c(\xi)$,
$\xi=x-ct$, where $\phi_c(\xi)$ is given by
\begin{eqnarray}\label{peakon-1}
\phi_c(\xi)=a \big[\fr 12 (\xi-\fr 12)^2+\fr {23}{24} \big]
\end{eqnarray}
with
\begin{equation}\label{peakon-root}
a=\left\{\begin{array}{cc}\fr {-13k_2\pm \sqrt{169k_2^2+1200 ck_1}}{50k_1},&  \;\; k_1\not=0,\\[.3cm]
\fr {12c}{13k_2},& \;\; k_1=0,\;\; k_2\not=0
\end{array}\right.
\end{equation}
for $\xi\in [-1/2,1/2]$ and $\phi(\xi)$ is extended periodically to the
real line.
\end{thm}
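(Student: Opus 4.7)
The plan is to verify that $u_c(t,x) = a\,g(x-ct)$, with $g$ the periodic Green function of $A = \mu-\partial_x^2$ from \eqref{green}, solves the $u$-formulation \eqref{cauchy-1} of \eqref{m-m-mu-ch} (with $\gamma=0$) in the weak sense of Definition 3.2, and to read off the algebraic relation between $a$ and $c$ that is forced. Three identities drive the calculation: $\mu(g)=\int_0^1 g(x)\,dx = 1$, so $\mu(u_c)=a$; $g_x(\xi)=\xi-\tfrac12$ on $(0,1)$; and $g_{xx}=1$ classically there, with $Ag=\delta$ distributionally. As a consequence the momentum $m_c=\mu(u_c)-(u_c)_{xx}$ vanishes identically in the interior of one period, so the only nontrivial part of the proof is a matching condition at the crest $x=ct$.

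I would then compute, pointwise on one period $x-ct\in(0,1)$, the classical value of each summand in \eqref{cauchy-1}, setting $y = x-ct-\tfrac12\in(-\tfrac12,\tfrac12)$. The local $k_1$ contribution $(2u\mu(u)-\tfrac13 u_x^2)u_x$ evaluates to $a^3\bigl(\tfrac23 y^3+\tfrac{23}{12}y\bigr)$, while $\tfrac13\mu(u_x^3)=\tfrac{a^3}{3}\int_0^1(\xi-\tfrac12)^3\,d\xi=0$ by symmetry. The nonlocal piece $\partial_x A^{-1}\bigl(2\mu^2(u)u+\mu(u)u_x^2\bigr) = \partial_x A^{-1}\bigl(a^3(2y^2+\tfrac{23}{12})\bigr)$ is resolved by solving $\mu(v)-v_{xx}=a^3(2y^2+\tfrac{23}{12})$: the compatibility relation $\mu(v)=\mu(\mathrm{RHS})=\tfrac{25 a^3}{12}$ is imposed, and two integrations with the periodicity of $v_x$ and the zero-mean normalization of $v$ give $v_x=a^3\bigl(\tfrac{y}{6}-\tfrac{2y^3}{3}\bigr)$. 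The $k_2$ terms are treated in parallel: $uu_x=a^2\bigl(\tfrac12 y^3+\tfrac{23}{24}y\bigr)$ and $A^{-1}\partial_x\bigl(2u\mu(u)+\tfrac12 u_x^2\bigr) = a^2\bigl(-\tfrac{y^3}{2}+\tfrac{y}{8}\bigr)$. Adding all contributions to $u_t=-cay$, the cubic-in-$y$ terms cancel exactly and there remains
\begin{equation*}
\Bigl(-c\,a+\tfrac{25}{12}k_1 a^3 + \tfrac{13}{12} k_2 a^2\Bigr)\,y = 0 \qquad \text{for } y\in(-\tfrac12,\tfrac12),
\end{equation*}
equivalent to the algebraic relation $25 k_1 a^2 + 13 k_2 a - 12 c = 0$. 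Solving by the quadratic formula produces \eqref{peakon-root}, the threshold $c\ge -\tfrac{169 k_2^2}{1200 k_1}$ being precisely the non-negativity of the discriminant (and a single linear inversion handles the case $k_1=0$, $k_2\neq 0$).

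To upgrade the pointwise identity on $(0,1)$ to the weak formulation, I would insert $u_c$ into \eqref{weaksol} against an arbitrary test function $\varphi\in C_c^\infty([0,T)\times\mathbb{S})$. Because $u_c$ is continuous, $u_{cx}$ is bounded (with a single jump per period at the crest), and the remaining nonlinearities are smoothed by convolution with $g_x$, every integrand sits in $L^\infty$; no distributional boundary mass can be produced at the peak, and the global weak identity reduces, after a Fubini-type split in $t$, to the pointwise one already established. The main technical obstacle is the explicit resolution of the two nonlocal operators $A^{-1}\partial_x$ and $\partial_x A^{-1}$ on polynomial data: one must enforce the compatibility $\mu(A^{-1}f)=\mu(f)$ and pin down the two integration constants via the periodicity of $v_x$ and the zero-mean normalization of $v$. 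Once these are handled correctly, the cancellation of the cubic-in-$y$ terms and the emergence of the coefficient $\tfrac{25}{12}$ on the $k_1 a^3$ contribution (rather than the naive $\tfrac{26}{12}$ or $\tfrac{23}{12}$ one would get by omitting the $A^{-1}$ pieces) follow automatically, and the algebraic relation for $a$ in \eqref{peakon-root} drops out of the linear coefficient in $y$.
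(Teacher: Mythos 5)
Your proposal is correct and follows essentially the same strategy as the paper: insert the ansatz $u_c=a\,g(x-ct)$ into the nonlocal $u$-formulation of the equation, use $\mu(u_c)=a$ and $\mu(u_{c,x}^3)=0$, observe that the cubic terms in $\xi-\tfrac12$ cancel, and read off the quadratic relation $\tfrac{25}{12}k_1a^2+\tfrac{13}{12}k_2a=c$ whose discriminant gives the stated threshold on $c$. The only difference is computational bookkeeping: the paper evaluates the nonlocal terms by explicitly computing the convolutions $g_x\ast(2\mu(u_c)u_c+u_{c,x}^2)$ and $g_x\ast(2\mu(u_c)u_c+\tfrac12 u_{c,x}^2)$ over the three subintervals $(0,ct)$, $(ct,x)$, $(x,1)$ and checks the two cases $x>ct$ and $x\le ct$ against the test function directly, whereas you invert $A=\mu-\partial_x^2$ by solving the second-order ODE with the mean-compatibility and periodicity conditions — both yield the same polynomials in $\xi-\tfrac12$, and your justification that the continuity of $u_c$ prevents any boundary mass at the crest correctly bridges the pointwise identity to Definition~\ref{def-gws-3-1}.
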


\begin{proof} In view of \eqref{peakon-1}, we assume that
the periodic peakon of \eqref{m-m-mu-ch} is given by
\begin{equation}\label{peakon-2}
u_c(t, x)=a\left[\fr 1{2}(\xi-[\xi]-\fr{1}{2})^2+\fr {23}{24}\right],
\end{equation}
where $a$ is a constant.
According to Definition \ref{def-gws-3-1}, $u_c$ satisfies the
equation
\begin{equation}
\label{peakon-3}
\begin{split}
\sum\limits_{j=1}^6 I_j:=&\int_0^T\int_{{\Bbb S}} u_{c,t} \varphi
dxdt+k_1 \int_0^T\int_{{\Bbb S}}
\big(2\mu(u_c)u_cu_{c,x}-\fr{1}{3}u_{c,x}^3\big)\varphi dxdt\\
&+k_1\int_0^T\int_{{\Bbb S}} g_x\ast \left[\mu
(u_c)\big(2\mu(u_c)u_c+u_{c,x}^2\big)\right]\varphi dxdt
+\fr{1}{3}k_1\int_0^T\int_{{\Bbb S}} \mu(u_{c,x}^3)\varphi dxdt\\
&+k_2\int_0^T\int_{{\Bbb S}} g_x\ast \big(2\mu(u_c)u_c+\frac 12 u_{c,x}^2\big)\varphi dxdt
+k_2\int_0^T\int_{{\Bbb S}} u_c u_{c,x}\varphi dxdt=0,
\end{split}
\end{equation}
for some $T>0$ and every test function $ \varphi(t, x)\in
C^{\infty}_{c} ([0,T)\times {\Bbb S})$. For any $x\in {{\Bbb S}}$, one
finds that
\begin{align*}
\mu(u_c)=&\;a\int_0^{ct}\left[\fr{1}{2}(x-ct+\fr{1}{2})^2+\fr{23}{24}\right]dx
\;+a\int_{ct}^1\left[\fr{1}{2}(x-ct-\fr{1}{2})^2+\fr{23}{24}\right]dx
=a,\\
\mu(u_{c,x}^3)=&\;a^3\int_0^{ct}(x-ct+\fr{1}{2})^3dx+a^3\int_{ct}^1(x-ct-\fr{1}{2})^3dx
=0.
\end{align*}

To compute $I_j$, $j=1,\ldots,6$, we need to consider two possibilities:
(i) $x>ct$, and (ii) $x \leq ct$. For $x>ct$, we have
$$2\mu(u_c)u_c-\fr 1{3}u_{c,x}^2=\fr{2}{3}a^2(\xi-\fr
1{2})^2+\fr{23}{12}a^2.
$$ On the other hand,
\begin{equation*}
\begin{split}
g_x\ast &\big(2\mu(u_c)u_c+u_{c,x}^2\big)\\
=&\;\int_{{\Bbb S}}\left(x-y-[x-y]-\fr 1{2}\right)\left(2a^2\big(y-ct-[y-ct]-\fr1{2}\big)^2+\fr{23}{12}a^2\right)dy\\
=&\;\int_0^{ct}\left(x-y-\fr 1{2}\right)\left(2a^2(y-ct+\fr1{2})^2+\fr{23}{12}a^2\right)dy\\
&\;+\int_{ct}^x\left(x-y-\fr 1{2}\right)\left(2a^2(y-ct-\fr1{2})^2+\fr{23}{12}a^2\right)dy\\
&\;+\int_{x}^1\left(x-y+\fr 1{2}\right)\left(2a^2(y-ct-\fr1{2})^2+\fr{23}{12}a^2\right)dy=\frac {1}{3}a^2 \xi(\xi-1)(1-2\xi),
\end{split}
\end{equation*}
\begin{equation*}
\begin{split}
g_x\ast & \big(2\mu(u_c)u_c+\frac 12 u_{c,x}^2\big)\\
=&\;\int_{{\Bbb S}}\left(x-y-[x-y]-\fr 1{2}\right)\left(\fr 32 a^2\big(y-ct-[y-ct]-\fr 1{2}\big)^2+\fr{23}{12}a^2\right)dy\\
=&\;\int_0^{ct}\left(x-y-\fr 1{2}\right)\left(\fr 32a^2(y-ct+\fr1{2})^2+\fr{23}{12}a^2\right)dy\\
&\;+\int_{ct}^x\left(x-y-\fr 1{2}\right)\left( \fr 32 a^2(y-ct-\fr1{2})^2+\fr{23}{12}a^2\right)dy\\
&\;+\int_{x}^1\left(x-y+\fr 1{2}\right)\left(\fr 32 a^2(y-ct-\fr1{2})^2+\fr{23}{12}a^2\right)dy=\frac {1}{4}a^2 \xi(\xi-1)(1-2\xi),
\end{split}
\end{equation*}
It follows that
\begin{equation*}
\begin{split}
I_1=&\; \int_0^T\int_{{\Bbb S}} u_{c,t} \varphi dxdt
=\;-ca\int_0^T\int_{{\Bbb S}} (\xi-\fr 1{2})\varphi(x,t)dxdt,\\
I_2=&\; k_1a^3\int_0^T\int_{{\Bbb S}}\left[\fr 2{3}(\xi-\fr 1{2})^3+\fr {23}{12}(\xi-\fr{1}{2})\right]\varphi(x,t) dxdt,\\
I_3=&\;\fr{1}{3}k_1a^3\int_0^T\int_{{\Bbb
S}}(-2\xi^3+3\xi^2-\xi)\varphi(x,t) dxdt,\;\; I_4=0,\\
I_5=&\frac 14 k_2 a^2\;\int_0^T\int_{{\Bbb
S}}(-2\xi^3+3\xi^2-\xi)\varphi(x,t) dxdt,\\
I_6=& k_2 a^2\;\int_0^T\int_{{\Bbb S}}\left[\frac 12 (\xi-\fr 12)^3+\fr {23}{24}(\xi-\fr 12)\right]\varphi(x,t) dxdt.
\end{split}
\end{equation*}
Plugging above expressions into \eqref{peakon-3}, we deduce that
for any $\varphi(x,t)\in C_c^{\infty}([0,T)\times {\Bbb S})$
\begin{equation*}
\sum\limits_{j=1}^6 I_j=\int_0^T\int_{{\Bbb
S}}a \left(\fr{25}{12}k_1a^2+\fr {13}{12}k_2 a -c\right )(\xi-\fr1{2})\varphi(x,t)dxdt.
\end{equation*}
A similar computation yields for $x\leq ct$ that
\begin{eqnarray*}
&2\mu(u_c)u_c-\fr 1{3}u_{c,x}^2=\;\fr{2}{3}a^2(\xi+\fr 1{2})^2+\fr{23}{12}a^2,\\
&2\mu(u_c)u_c+\fr 1{2}u_{c,x}^2=\;\fr{3}{2}a^2(\xi+\fr 1{2})^2+\fr{23}{12}a^2,
\end{eqnarray*}
and
\begin{eqnarray*}
\begin{aligned}
&g_x\ast \big(2\mu(u_c)u_c+u_{c,x}^2\big)
=\;\frac {1}{3}a^2 \xi(\xi-1)(1-2\xi),\\
&g_x\ast \big(2\mu(u_c)u_c+\frac 12 u_{c,x}^2\big)
=\;\frac {1}{4}a^2 \xi(\xi+1)(1+2\xi),\\
\end{aligned}
\end{eqnarray*}
This allows us to compute
\begin{eqnarray*}
\begin{aligned}
\sum\limits_{j=1}^4I_j=&\;\int_0^T\int_{{\Bbb
S}}(\fr {25}{12}k_1 a^3-ca)(\xi-\fr 12 )\varphi(x,t) dxd,\\
I_5=&\;\fr 14 k_2 a^2 \int_0^T\int_{\Bbb S}(2\xi^3+3\xi^2 +\xi)\varphi(x,t)dxdt,\\
I_6=&\;k_2 a^2 \int_0^T\int_{\Bbb S}\left[\fr 12 (\xi+\fr 12)^2+\fr {23}{24}(\xi+\fr 12 )\right]\varphi(x,t)dxdt,\\
\end{aligned}
\end{eqnarray*}
whence we arrive at
\begin{equation*}
\sum\limits_{j=1}^6 I_j=\int_0^T\int_{{\Bbb
S}}a \left(\fr{25}{12}k_1a^2+\fr {13}{12}k_2 a -c\right)(\xi+\fr1{2})\varphi(x,t)dxdt.
\end{equation*}
Since $\varphi$ is arbitrary, both cases imply that $a$ fulfills the equation
$$\fr{25}{12}k_1a^2+\fr {13}{12}k_2 a-c=0. $$
Clearly, its solutions are given by \eqref{peakon-root}. Thus the theorem is proved.
\end{proof}


Furthermore, one can show that Eq.\eqref{m-m-mu-ch} admits
the multi-peakons of the form \eqref{mu-ch-peakon-2},
where $p_i(t)$ and $q^i(t)$, $i=1,2,\ldots,N$, satisfy the following ODE system
\begin{eqnarray}\label{peakon-4}
\begin{split}
&\dot{p}_i+k_2\sum\limits_{j=1}^Np_ip_j(q^i-q^j-\fr 12 )=0,\\
&\dot{q}_i-k_1\big[ \frac 1{12}\big(23 \sum\limits_{j,k\not=i}(p_j+p_k)^2+25 p_i^2\big)\\
&-p_i \big( \sum\limits_{j\not= i}p_j(q^i-q^j)^2+\frac 12 \lambda_{ij})^2+\frac {49}{12} \big) \\
&-\sum\limits_{j<k, j,k\not=i}p_jp_k(q^j-q^k+\epsilon_{jk} )^2 \big]\\
&-k_2\sum\limits_{j=1}^N p_j\big(\frac 12 (q^i-q^j)^2-\frac 12 |q^i-q^j|+\frac {13}{12}\big)=0,
\end{split}
\end{eqnarray}
where $\lambda_{ij}$ and $\epsilon_{jk}$ are given by \eqref{lam-eps}.

In particular, when $N=2$, system \eqref{peakon-4} can be solved explicitly, which yields
\begin{eqnarray}\label{two-peakon-1}
\begin{aligned}
p_1=&\,\frac {ae^{b(t-t_0)}}{1+e^{b(t-t_0)}},\;\; p_2= \frac {a}{1+e^{b(t-t_0)}},\\
q^1=&\;-\frac {k_1 a^2}{b} \left(\fr 1{12}+(\fr 12 -a_1)^2 \right) \frac 1{1+e^{b(t-t_0)}}\\
&\;+\frac a {12}\big( 23 k_1 a +6 a_1 (a_1-1)k_2+13k_2 \big) (t-t_0)\\
&+\frac a{6b}\big( k_1 a-3a_1(a_1-1)k_2 \big)\ln (1+e^{b(t-t_0)})+c_1,\\
q^2=&\;q^1+a,
\end{aligned}
\end{eqnarray}
where $a$, $a_1$, $b>0$ and $t_0$ are some constants.

\renewcommand{\theequation}{\thesection.\arabic{equation}}
\setcounter{equation}{0}
\section{The local well-posedness}
In this section, we shall discuss the local
well-posedness of the Cauchy problem (\ref{cauchy}). Our result is the following theorem:
\begin{theorem}\label{t3-1}
Suppose that $u_0\in H^s(\mathbb{S})$ for some constant $s>5/2$. Then there
exists $T>0$, which depends only on $\|u_0\|_{H^s}$, such that  problem (\ref{cauchy}) has
a unique solution $u(t,x)$ in the space $C([0,T);H^s(\mathbb{S}))\bigcap C^1([0,T);H^{s-1}(\mathbb{S}))$. Moreover,
the solution $u$ depends continuously on the initial data $u_0$ in
the sense that the mapping of the initial data to the solution is
continuous from the Sobolev space $H^s$ to the space $C([0,T);H^s(\mathbb{S}))\bigcap C^1([0,T);H^{s-1}(\mathbb{S}))$.
\end{theorem}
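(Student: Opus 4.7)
The strategy is the classical viscosity (parabolic regularization) method, applied to the nonlocal transport reformulation (5.2). First I would rewrite the Cauchy problem in the form
$$u_t + F_1(u) u_x + F_2(u) = 0, \quad u(0,x)=u_0(x),$$
where $F_1(u) = k_1\bigl(2u\mu(u)-\tfrac13 u_x^2\bigr) + k_2 u$ and $F_2(u)$ gathers the remaining nonlocal pieces $k_1\partial_x A^{-1}\bigl(2\mu^2(u)u+\mu(u)u_x^2\bigr) + \tfrac{k_1}{3}\mu(u_x^3) + k_2 A^{-1}\partial_x\bigl(2u\mu(u)+\tfrac12 u_x^2\bigr)$. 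The operator $A^{-1}$ gains two derivatives, so $\partial_x A^{-1}$ is of order $-1$; hence $F_2(u) \in H^s$ whenever $u \in H^s$ with $s>5/2$, with a control of the form $\|F_2(u)\|_{H^s} \le C(\|u\|_{H^s})(1+\|u\|_{H^s}^2)$. This uses the algebra property (Lemma 3.3) and the continuous embedding $H^{s-1}\hookrightarrow W^{1,\infty}$.

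Next I would introduce, for $\varepsilon>0$, the viscous problem
$$u^\varepsilon_t - \varepsilon u^\varepsilon_{xx} + F_1(u^\varepsilon) u^\varepsilon_x + F_2(u^\varepsilon) = 0, \qquad u^\varepsilon(0,x)=u_0(x),$$
and obtain, via the heat semigroup together with a Picard iteration in $C([0,T_\varepsilon];H^s)$, a unique maximal solution $u^\varepsilon$ with the standard parabolic blow-up criterion. To get a common time of existence independent of $\varepsilon$, I would apply $\Lambda^s=(1-\partial_x^2)^{s/2}$ to the equation, pair with $\Lambda^s u^\varepsilon$ in $L^2$, and observe that the dissipative contribution $\varepsilon\|\partial_x\Lambda^s u^\varepsilon\|_{L^2}^2$ has the good sign and may be discarded. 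Writing the transport term as
$$\bigl(\Lambda^s(F_1\partial_x u^\varepsilon),\Lambda^s u^\varepsilon\bigr)_{L^2} = \bigl([\Lambda^s,F_1]\partial_x u^\varepsilon,\Lambda^s u^\varepsilon\bigr)_{L^2} + \bigl(F_1\partial_x \Lambda^s u^\varepsilon,\Lambda^s u^\varepsilon\bigr)_{L^2},$$
the commutator is estimated by Lemma 3.4 and the second piece by $\tfrac12\|\partial_x F_1\|_{L^\infty}\|u^\varepsilon\|_{H^s}^2$ after integration by parts. Using Lemma 3.3 for the nonlinear terms in $F_1$ and $F_2$, this yields a differential inequality of cubic type
$$\frac{d}{dt}\|u^\varepsilon(t)\|_{H^s}^2 \le C\bigl(1+\|u^\varepsilon(t)\|_{H^s}^3\bigr),$$
whose comparison principle gives a uniform bound and a uniform existence time $T=T(\|u_0\|_{H^s})>0$.

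With these uniform estimates in hand, I would use the equation itself to bound $\partial_t u^\varepsilon$ in $L^\infty([0,T];H^{s-1})$, hence obtain relative compactness (via Aubin--Lions) of $\{u^\varepsilon\}$ in $C([0,T];H^{s'})$ for every $s'<s$. Extracting a subsequence and passing to the limit gives a distributional solution $u\in L^\infty([0,T];H^s)\cap\mathrm{Lip}([0,T];H^{s-1})$ to (5.1). Uniqueness follows by performing an $H^{s-1}$ energy estimate on the difference $w=u_1-u_2$ of two solutions, using Lemmas 3.2--3.4 and the mapping properties of $A^{-1}$ to show that the right-hand side is Lipschitz on bounded sets of $H^s$; Gr\"onwall then forces $w\equiv 0$. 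Finally, to upgrade $u$ to $C([0,T];H^s)\cap C^1([0,T];H^{s-1})$ and to prove continuous dependence on the initial datum, I would apply a Bona--Smith approximation: regularize $u_0$ as $u_0^\delta=J_\delta u_0$ (with $J_\delta$ a Friedrichs mollifier), compare the corresponding smooth solutions via the uniqueness-type estimate at different regularity levels, and let $\delta\to 0$.

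The main technical obstacle is the cubic term $u_x^2 \cdot u_x$ appearing in $F_1(u)u_x$ together with the nonlocal piece $\mu(u_x^2)$ hidden in $F_2$: controlling $\|u_x^2\|_{H^s}$ is exactly where $s>5/2$ becomes essential, since one needs $\|u_x\|_{L^\infty}\lesssim \|u\|_{H^s}$ to close the estimate by Lemma 3.3 and obtain the cubic Gr\"onwall bound above. The same cubic strength shows up again in the uniqueness step, where one must carefully commute $\Lambda^{s-1}$ past products like $(u_{1,x}+u_{2,x})w_x$ and track every nonlocal factor introduced by $A^{-1}$; the commutator estimate of Lemma 3.4 combined with the continuity of $A^{-1}:H^{s-2}\to H^s$ is what ultimately makes the argument close.
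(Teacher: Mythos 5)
Your proposal is correct in outline, but it takes a genuinely different route from the paper's. The paper also calls its method ``viscosity,'' but the regularization is not parabolic: it replaces $m_t$ by $m_t-\varepsilon m_{xxt}$, so that after inverting the bounded operator $\mathcal{D}=(1-\varepsilon\partial_x^2)^{-1}(\mu-\partial_x^2)^{-1}:H^s\to H^{s+4}$ the regularized problem becomes an ODE in $H^s$ with a smooth (indeed smoothing) vector field; contraction mapping then gives, for each fixed $\varepsilon$, a solution that is \emph{global} in time and $C^\infty$ in $t$, which sidesteps the Picard-on-the-heat-semigroup step and the $\varepsilon$-dependent lifespan that your parabolic regularization $-\varepsilon u^\varepsilon_{xx}$ requires. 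The second difference is the convergence mechanism: instead of Aubin--Lions compactness followed by a separate Bona--Smith upgrade, the paper mollifies the \emph{data} ($u_0^\varepsilon=\phi^\varepsilon\ast u_0$ with the scaling $\varepsilon^{-1/16}$) at the same time as it regularizes the equation, and proves directly that $\{u^\varepsilon\}$ is Cauchy in $C([0,T);H^s)$ and $\{u^\varepsilon_t\}$ in $C([0,T);H^{s-1})$, which delivers strong convergence, the stated continuity in time, and (essentially) continuous dependence in one pass. Your route buys a more modular argument with off-the-shelf compactness; the paper's buys global solvability of the approximate problems and avoids having to extract subsequences. One quantitative slip in your write-up: since the nonlinearity is cubic in $u$ (e.g.\ the term $u_x^2u_x$ and the nonlocal pieces involving $\mu(u)u_x^2$), the energy pairing produces a bound of the form $\frac{d}{dt}\|u^\varepsilon\|_{H^s}^2\le C\bigl(\|u^\varepsilon\|_{H^s}^3+\|u^\varepsilon\|_{H^s}^4\bigr)$, i.e.\ quartic rather than cubic in $\|u^\varepsilon\|_{H^s}$ (compare the $\bigl(\int(\Lambda^{q+2}u)^2\,dx+1\bigr)^2$ on the right of (5.3)); the comparison ODE $y'\le C(1+y^2)$ for $y=\|u^\varepsilon\|_{H^s}^2$ still closes and gives a uniform lifespan, so this does not affect the validity of your argument, only the stated exponent.
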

In the remaining part of this section, we will concentrate on proving this theorem. The proof is approached by the viscosity method, which has been applied to establish the local well-posedness for the KdV equation \cite{bon} and the CH equation \cite{lo}.

Firstly, we consider the Cauchy problem for a regularized version of the first
equation of (\ref{cauchy}).
\begin{eqnarray}\label{e3.1}
\left\{ \begin{aligned}
&m_t-\varepsilon m_{xxt}=-k_1((2\mu(u)u-u^2_x)m)_x-k_2(2mu_x+um_x), \quad t>0,  \quad x \in \mathbb{R}, \\
&u(t,x+1)=u(t,x), \quad t>0,  \quad x \in \mathbb{R}, \\
&u(0,x)=u_0(x)\in H^s, \quad x \in \mathbb{R},\quad s\ge1.
\end{aligned} \right.
\end{eqnarray}
Here $\varepsilon$ is a constant and $0<\varepsilon<1/16$. We start by inverting the linear
differential operator on the left hand side.
For any $0<\varepsilon<1/16$ and any $s$, the integral operator
\begin{eqnarray}\label{e3.2}
\mathcal{D}=(1-\varepsilon\partial^2_x)^{-1}(\mu-\partial^2_x)^{-1}:H^s\rightarrow
H^{s+4}
\end{eqnarray}
defines a bounded linear operator on the indicated Sobolev spaces.

To prove the existence of a solution to the problem in (\ref{e3.1}), we
use the operator (\ref{e3.2}) to both sides of the equation in
(\ref{e3.1}) and then integrate the resulting equation with respect
to time $t$. This leads to the following equation
\begin{align*}
u(t,x)=u_0(x)-\int^t_0\mathcal{D}&\left\{k_1\left[2\mu^2(u)u_x-\mu(u)\partial^3_x(u^2)+\mu(u)\partial_x(u^2_x)
+\frac1{3}\partial_x(u^3_x)\right]\right.\nonumber\\&\left.+k_2\left[2\mu(u)u_x-\frac1{2}\partial^3_x(u^2)
+\frac1{2}\partial_x(u_x^2)
 \right]\right\}(\tau,x)\ d\tau.
\end{align*}
A standard application of the contraction mapping theorem leads to
the following existence result which is stated without the detailed proof.
\begin{theorem}\label{t3.1}
For each initial data $u_0\in H^s$ with $s\ge1$, there exists a
$T>0$ depending only on the norm of $u_0$ in $H^s$ such that there
corresponds a unique solution $u(t,x)\in C([0,T];H^s)$ of the
Cauchy problem (\ref{e3.1}) in the sense of distribution. If $s\ge2$,
the solution $u(t,x)\in C^{\infty}([0,\infty);H^s)$ exists for all time. In particular, when
$s\ge4$, the corresponding solution is a classical globally defined
solution of (\ref{e3.1}).
\end{theorem}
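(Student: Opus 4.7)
The plan is to solve the regularized Cauchy problem \eqref{e3.1} by applying the Banach fixed-point theorem to the integral formulation already written down just above in the excerpt, namely
\[
F(u)(t,x) = u_0(x) - \int_0^t \cD\bigl(N(u)\bigr)(\tau,x)\,d\tau,
\]
where
\[
N(u)=k_1\Bigl[2\mu^2(u)u_x-\mu(u)\pa_x^3(u^2)+\mu(u)\pa_x(u_x^2)+\tfrac{1}{3}\pa_x(u_x^3)\Bigr]+k_2\Bigl[2\mu(u)u_x-\tfrac{1}{2}\pa_x^3(u^2)+\tfrac{1}{2}\pa_x(u_x^2)\Bigr].
\]
The structural observation that drives the whole argument is that $N(u)$ contains at most three spatial derivatives of $u$, whereas $\cD=(1-\varepsilon\pa_x^2)^{-1}(\mu-\pa_x^2)^{-1}$ is smoothing of order four on the periodic Sobolev scale; in particular $\cD\circ N$ maps $H^s$ boundedly into $H^{s+1}$ with operator norm depending on $\varepsilon$.

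First I would fix a radius $R>2\|u_0\|_{H^s}$ and seek $T>0$ so that $F$ sends the closed ball $B_R\subset C([0,T];H^s)$ into itself. Since $\mu(u)=\int_{\mathbb S} u\,dx$ is a bounded linear functional on $H^s$, the algebra property of $H^s$ for $s>1/2$ (Lemma \ref{l2.3}) combined with Moser-type tame estimates yields a bound of the form
\[
\|\cD(N(u))\|_{H^s}\leq C(\varepsilon)\,P\bigl(\|u\|_{H^s}\bigr)
\]
for some polynomial $P$ of degree three, whence $\|F(u)\|_{C([0,T];H^s)}\leq\|u_0\|_{H^s}+T\,C(\varepsilon)P(R)\leq R$ for sufficiently small $T$.

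Next I would show that $F$ is a strict contraction on $B_R$. Writing $N(u)-N(v)$ as a telescoping sum of multilinear differences in $u-v$ and applying Lemma \ref{l2.3} once more, together with the smoothing property of $\cD$, one obtains
\[
\|\cD(N(u)-N(v))\|_{H^s}\leq C(\varepsilon,R)\,\|u-v\|_{H^s},
\]
so that $\|F(u)-F(v)\|_{C([0,T];H^s)}\leq T\,C(\varepsilon,R)\,\|u-v\|_{C([0,T];H^s)}$. For $T=T(\|u_0\|_{H^s},\varepsilon)$ small enough, $F$ is a strict contraction, and the Banach fixed-point theorem supplies a unique fixed point $u\in C([0,T];H^s)$, which by construction is the sought distributional solution of \eqref{e3.1}.

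For the higher-regularity and long-time statements, I would argue by bootstrap and continuation. Differentiating the integral equation in $t$ gives $u_t=-\cD(N(u))\in C([0,T];H^{s+1})$ by the smoothing property; iterating this identity propagates time regularity to all orders, so $u\in C^\infty([0,T);H^s)$ whenever $s\geq 2$. For global existence one derives an a priori differential inequality for $\|u(t)\|_{H^s}$ by pairing the equation with $u$ in $H^s$; the $\varepsilon$-dissipation coming from $(1-\varepsilon\pa_x^2)^{-1}$, together with the equivalence of $\|\cdot\|_\mu$ and $\|\cdot\|_{H^1}$ recalled at the end of Section 3, keeps the right-hand side polynomial in $\|u\|_{H^s}$, so Gronwall's lemma prevents finite-time blow-up of the $H^s$-norm and the standard continuation argument extends the solution to $[0,\infty)$. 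Finally, the Sobolev embedding $H^s\hookrightarrow C^2$ for $s\geq 4$ promotes this solution to a classical one. The main technical obstacle throughout is the cubic contribution $\pa_x(u_x^3)$, whose control genuinely requires the four-derivative smoothing of $\cD$ rather than plain algebra estimates, and the careful tracking of the $\varepsilon$-dependence so that the same scheme can later drive the viscosity limit needed to establish Theorem \ref{t3-1}.
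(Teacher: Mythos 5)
Your local-in-time argument is exactly the paper's: the authors also invert $(1-\varepsilon\pa_x^2)(\mu-\pa_x^2)$, pass to the integral equation, and invoke the contraction mapping theorem (they explicitly omit the details as ``standard''), so that part of your proposal is fine, modulo the usual caveat that at the low end $s\in[1,3/2]$ the cubic term $\pa_x(u_x^3)$ is not handled by the plain algebra property of $H^{s-1}$ and needs the negative-index product estimates of Lemma~\ref{l2.1}.

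The genuine gap is in your global existence step. You claim that pairing the equation with $u$ in $H^s$ gives a differential inequality with polynomial right-hand side and that ``Gronwall's lemma prevents finite-time blow-up.'' That is false: a superlinear polynomial bound such as $\frac{d}{dt}y\le C(1+y)^2$ is of Riccati type and is perfectly consistent with finite-time blow-up --- indeed the paper's own Theorem~\ref{t3.2} derives exactly such an inequality and extracts from it only the local bound $M/(1-cMt)$. There is also no ``$\varepsilon$-dissipation'' in \eqref{e3.1}: the regularization $(1-\varepsilon\pa_x^2)^{-1}$ is conservative, not dissipative. The mechanism the paper actually uses (stated immediately after Theorem~\ref{t3.1}) is the exact conservation law
\begin{equation*}
\mu^2(u)(t)+\int_{\mathbb S}\bigl(u_x^2+\varepsilon u_{xx}^2\bigr)\,dx
=\mu^2(u)(0)+\int_{\mathbb S}\bigl(u_{0,x}^2+\varepsilon u_{0,xx}^2\bigr)\,dx,
\end{equation*}
which for fixed $\varepsilon>0$ gives a time-uniform bound on an $H^2$-equivalent norm of $u$; only with this a priori bound in hand do the higher-order energy estimates become tame (linear in the top norm with bounded coefficients), so that a genuine Gronwall argument and the continuation criterion yield global existence for $s\ge2$. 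Your proposal never invokes this conservation law, and without it the scheme you describe cannot get past the local existence time.
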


The global existence result follows from the conservation law
$$\mu^2(u)(t)+\int_{\mathbb{R}}(u_x^2+\varepsilon u^2_{xx})\ dx=
\mu^2(u)(0)+\int_{\mathbb{R}}(u_{0x}^2+\varepsilon
u^2_{0xx})\ dx,$$ admitted by (\ref{e3.1}) in its integral form.

Now we establish the a priori estimates for solutions of
(\ref{e3.1}) using energy estimates.
\begin{theorem}\label{t3.2}
Suppose that for some $s\ge4$, the function $u(t,x)$ is the solution
of (\ref{e3.1}) corresponding to the initial data
$u_0\in H^s$. For any real number $q\in(1/2,s-2]$, there exists a constant $c$
depending only on $q$ such that
\begin{align}\label{e3.3}
&\int_{\mathbb{S}}(\Lambda^{q+2}u)^2\
dx\le\int_{\mathbb{S}}\big[(\Lambda^{q+2}u_0)^2+\varepsilon(\Lambda^{q+3}u_0)^2\big]\
dx+c\int^t_0\left(\int_{\mathbb{S}}(\Lambda^{q+2}u)^2\,dx+1\right)^2\
d\tau.
\end{align}
For any $q\in[0,s-2]$, there exists a constant $c$ such that
\begin{align}\label{e3.4}
\|u_t\|_{H^{q+1}}\le c\|u\|^2_{H^{q+2}} \left(1+\|u\|_{H^{q+2}}\right).
\end{align}
\end{theorem}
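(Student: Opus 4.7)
The plan is to prove both estimates by energy methods at the level of the momentum variable $m=\mu(u)-u_{xx}$, turning the regularization $-\varepsilon m_{xxt}$ in \eqref{e3.1} into an auxiliary positive-definite Lyapunov contribution. For \eqref{e3.3} I would apply $\Lambda^q$ to \eqref{e3.1} and take the $L^2(\mathbb{S})$ pairing with $\Lambda^q m$. Integrating the $\varepsilon$-contribution by parts in $x$, the left-hand side collapses to
\[
\frac12\frac{d}{dt}\Bigl[\|\Lambda^q m\|_{L^2}^2+\varepsilon\|\Lambda^q m_x\|_{L^2}^2\Bigr],
\]
reducing the task to bounding the pairing
\[
I:=\int_{\mathbb{S}}\Lambda^q\Bigl[-k_1\bigl((2\mu(u)u-u_x^2)m\bigr)_x-k_2(2mu_x+um_x)\Bigr]\cdot\Lambda^q m\,dx
\]
by a fourth-degree polynomial in $\|u\|_{H^{q+2}}$.

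Each summand in $I$ has the structure $\int_{\mathbb{S}}\Lambda^q(FG_{(x)})\cdot\Lambda^q m\,dx$ with $F$ a coefficient built from $u,u_x,\mu(u)$ and $G\in\{m,m_x\}$. I would treat each via the commutator decomposition $\Lambda^q(FG)=F\Lambda^q G+[\Lambda^q,F]G$: integration by parts reduces the symmetric principal part $\int F\Lambda^q G\cdot\Lambda^q m\,dx$ either to zero or to $-\tfrac12\int F_x(\Lambda^q m)^2dx$, bounded by $\|F_x\|_{L^\infty}\|\Lambda^q m\|_{L^2}^2$; the commutator remainder is controlled via Lemma~\ref{l2.4}, whose input norms $\|F\|_{H^q}$, $\|F_x\|_{L^\infty}$, $\|G\|_{L^\infty}$ are in turn bounded by $c\|u\|_{H^{q+2}}^k$ with $k\le 2$, using the Sobolev algebra Lemma~\ref{l2.3}, the embedding $H^{q+2}\hookrightarrow C^2$ (valid since $q>1/2$), and the pointwise inequality $|\mu(u)|\le c\|u\|_{H^{q+2}}$. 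Collecting all contributions yields
\[
\frac{d}{dt}\Bigl[\|\Lambda^q m\|_{L^2}^2+\varepsilon\|\Lambda^q m_x\|_{L^2}^2\Bigr]\le c\bigl(\|u\|_{H^{q+2}}^2+1\bigr)^2.
\]
Integrating in time, using the Fourier equivalence $\tfrac14\|u\|_{H^{q+2}}^2\le\|\Lambda^q m\|_{L^2}^2\le\|u\|_{H^{q+2}}^2$ (which follows from $\widehat m(0)=\mu(u)$, $\widehat m(n)=n^2\widehat u(n)$ for $n\ne0$, and the elementary inequality $\tfrac14(1+n^2)^{q+2}\le(1+n^2)^q n^4\le(1+n^2)^{q+2}$ for $|n|\ge 1$), together with the conservation of $\mu(u)$ that comes from integrating \eqref{e3.1} over $\mathbb{S}$, I would recover \eqref{e3.3}.

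For \eqref{e3.4} I would invert the full left-hand operator of \eqref{e3.1} to write $u_t=\mathcal{D}\Phi$, where $\Phi$ is the momentum right-hand side and $\mathcal{D}$ is the operator of \eqref{e3.2}. Because $(\mu-\partial_x^2)^{-1}$ gains two derivatives while $(1-\varepsilon\partial_x^2)^{-1}$ is a contraction on every Sobolev scale, $\mathcal{D}\colon H^{q-1}\to H^{q+1}$ is bounded uniformly in $\varepsilon\in(0,1/16)$, so it suffices to show $\|\Phi\|_{H^{q-1}}\le c\|u\|_{H^{q+2}}^2(1+\|u\|_{H^{q+2}})$. Each summand of $\Phi$ is a product (possibly followed by $\partial_x$) of a factor in $H^{q+2}$ with $m\in H^q$ or $m_x\in H^{q-1}$; Lemmas~\ref{l2.1} and~\ref{l2.3} applied term by term deliver the claimed cubic bound, whence \eqref{e3.4}.

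The hardest piece is controlling the commutator contribution of the transport summand $\int\Lambda^q(um_x)\cdot\Lambda^q m\,dx$ inside $I$ when $q$ is close to $1/2$: a direct use of Lemma~\ref{l2.4} would produce an $\|m_x\|_{L^\infty}$ factor that is not controlled by $\|u\|_{H^{q+2}}$ alone, since $H^{q-1}$ fails to embed into $L^\infty$ for $q\le 3/2$. A systematic workaround combines the splitting $um_x=(um)_x-u_xm$ with the commutator identity $\partial_x\bigl([\Lambda^q,u]m\bigr)=[\Lambda^q,u]m_x+[\Lambda^q,u_x]m$: integrating by parts in the pairing converts the problematic commutator $[\Lambda^q,u]m_x$ into a combination whose bound requires only $\|u_{xx}\|_{L^\infty}$ and $\|m\|_{L^\infty}$, both controlled by $\|u\|_{H^{q+2}}$ once $q+2>5/2$. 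An analogous regrouping handles the transport portion of the cubic flux $\bigl((2\mu(u)u-u_x^2)m\bigr)_x$; after this reduction the remaining analysis is routine Kato-Ponce-type bookkeeping.
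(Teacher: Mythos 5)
Your route for \eqref{e3.4} is sound and in fact slightly more direct than the paper's: the paper pairs the equation with $\Lambda^q u_t\Lambda^q$ and closes with the Cauchy inequality, while you invert $\mathcal{D}$ explicitly and estimate the right-hand side in $H^{q-1}$ with Lemmas~\ref{l2.1}--\ref{l2.3}; both yield the cubic bound. The genuine gap is in \eqref{e3.3}. You run the energy estimate at the level of $m$, pairing with $\Lambda^qm$, and you correctly isolate the obstruction: for $q\in(1/2,3/2]$ the commutators $[\Lambda^q,u]m_x$ and $[\Lambda^q,u_x^2]m_x$ cannot be handled by Lemma~\ref{l2.4}, since the resulting factor $\|m_x\|_{L^{\infty}}$ is not controlled by $\|u\|_{H^{q+2}}$. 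But the workaround you propose is circular. Set $A=\int_{\mathbb{S}}[\Lambda^q,u]m_x\,\Lambda^qm\,dx$, $B=\int_{\mathbb{S}}[\Lambda^q,u_x]m\,\Lambda^qm\,dx$ and $C=\int_{\mathbb{S}}[\Lambda^q,u]m\,\Lambda^qm_x\,dx$. Integrating your identity $\partial_x([\Lambda^q,u]m)=[\Lambda^q,u]m_x+[\Lambda^q,u_x]m$ against $\Lambda^qm$ gives $A+B=-C$; the only way to treat $C$ with the quoted lemmas is to integrate by parts once more, which returns $C=-(A+B)$, i.e.\ the tautology $A=A$. A direct bound on $C$ also fails, because $\|\Lambda^qm_x\|_{L^2}\sim\|u\|_{H^{q+3}}$ exceeds the energy. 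Closing the argument this way requires the endpoint commutator estimate $\|[\Lambda^q,u]\partial_xg\|_{L^2}\le c\,\|u_x\|_{L^{\infty}}\|g\|_{H^{q}}$, valid for $0<q<1$ but not among the paper's stated tools.

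The paper sidesteps this by performing the energy estimate at the level of $u$: it pairs the fully expanded equation with $(\Lambda^qu)\Lambda^q$, then differentiates the equation once in $x$ and pairs with $(\Lambda^qu_x)\Lambda^q$, so that the total energy $\mu^2(\Lambda^qu)+\mu^2(\Lambda^qu_x)+\|\Lambda^qu_x\|^2_{L^2}+\|\Lambda^qu_{xx}\|^2_{L^2}+\varepsilon(\cdots)$ is equivalent to $\|u\|^2_{H^{q+2}}$ via the $\|\cdot\|_{\mu}$-norm equivalence. In that formulation the worst commutators are $[\Lambda^{q+1},u]u_{xx}$ and $[\Lambda^{q+1},u_x^2]u_{xx}$, for which Lemma~\ref{l2.4} only demands $\|u_x\|_{L^{\infty}}$ and $\|u_{xx}\|_{L^{\infty}}$, both controlled by $\|u\|_{H^{q+2}}$ once $q>1/2$. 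Recasting your computation in the $u$-variable (or importing the sharper commutator estimate) is what is needed to repair the proof; the remaining bookkeeping in your proposal, including the Fourier equivalence between $\|\Lambda^qm\|_{L^2}$ and $\|u\|_{H^{q+2}}$, is fine.
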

\begin{proof}
First of all, we recall that the operator $\Lambda=(1-\partial^2_x)^{1/2}$ and $\|v\|_{H^s}=\|\Lambda^s v\|_{L^2}$.
Note that $\mu(u)$ is free of $x$, then the equation in \eqref{e3.1} is equivalent to the following equation:
\begin{align*}
\mu_t(u)&-u_{xxt}+\varepsilon u_{xxxxt}\\=&-k_1\left(2\mu^2(u)u_x-4\mu(u)u_xu_{xx}+2u_xu^2_{xx}+u^2_xu_{xxx}-2\mu(u)uu_{xxx}\right)\\
&-k_2\big(2\mu(u)u_x-2u_xu_{xx}-uu_{xxx}\big).
\end{align*}
For any $q\in(1/2,s-2]$, applying $(\Lambda^qu)\Lambda^q$ to both sides of the above
equation and integrating with respect to $x$, we have the following estimate
\begin{align*}
\frac1{2}\dfrac{d}{dt}&\big(\mu^2(\Lambda^qu)+\|\Lambda^qu_x\|^2_{L^2}+\varepsilon\|u_{xx}\|^2_{H^q}\big)\\
=&(2k_1\mu(u)+k_2)\int_{\Bbb S}\left(\dfrac{3}{2}\Lambda^qu_x\Lambda^qu^2_{x}+\Lambda^qu_{xx}\Lambda^q(uu_{x})
-\Lambda^qu_x\Lambda^qu^2_x\right)\,dx\\
&-\dfrac{k_1}{3}\int_{\Bbb S}\Lambda^qu_{xx}\Lambda^qu^3_{x}\,dx.
\end{align*}
Using the Cauchy-Schwartz inequality, $|\mu(u)|\le c\|u\|_{L^2}\le c\|u\|_{H^{q+2}}$, we deduce that
\begin{align}\label{e3.5}
\frac1{2}\dfrac{d}{dt}\big(\mu^2(\Lambda^qu)+\|\Lambda^qu_x\|^2_{L^2}+\varepsilon\|u_{xx}\|^2_{H^q}\big)\le
c(\|u\|^4_{H^{q+2}}+\|u\|^3_{H^{q+2}}).
\end{align}
Next, we differentiate the equation in \eqref{e3.1} with regard to $x$ and then obtain:
\begin{align*}
-u_{xxxt}&+\varepsilon u_{xxxxxt}\\&=-k_1\left(2\mu^2(u)u_x-4\mu(u)u_xu_{xx}+2u_xu^2_{xx}+u^2_xu_{xxx}-2\mu(u)uu_{xxx}\right)_x\\
&\quad-k_2\big(2\mu(u)u_x-2u_xu_{xx}-uu_{xxx}\big)_x.
\end{align*}
Applying $\Lambda^qu_x\Lambda^q$ to the above equation and
integrating with respect to $x$ over $\Bbb S$ and noting that
$$\int_{\Bbb S}\Lambda^qf_x\Lambda^qg_x\,dx=-\int_{\Bbb S}\Lambda^{q+1}f\Lambda^{q+1}g\,dx+\int_{\Bbb S}\Lambda^qf\Lambda^qg\,dx,$$
we arrive at
 \begin{align}\label{e3.6}
\frac1{2}\dfrac{d}{dt}\big(\mu^2(\Lambda^qu_x)+\|\Lambda^qu_{xx}\|^2_{L^2}+\varepsilon\|u_{xxx}\|^2_{H^q}\big):\equiv
J_1+J_2+J_3,
\end{align}
where
\begin{align*}
J_1=&\;(4k_1\mu(u)+2k_2)\int_{\Bbb S}\Lambda^qu_x\Lambda^q(u_xu_{xx})_x\,dx\\
=&\;(2k_1\mu(u)+k_2)\int_{\Bbb S}\big(\Lambda^{q+1}u_x\Lambda^{q+1}u^2_x-\Lambda^qu_x\Lambda^qu^2_x\big)\,dx,\\
J_2=&\;(2k_1\mu(u)+k_2)\int_{\Bbb S}\Lambda^qu_x\Lambda^q(uu_{xxx})_x\,dx\\
=&\;(2k_1\mu(u)+k_2)\int_{\Bbb S}\Big(\Lambda^{q+1}u_x[\Lambda^{q+1},u]u_{xx}+u\Lambda^{q+1}u_{x}\Lambda^{q+1}u_{xx}
\\&\;\qquad\qquad\qquad\qquad-\Lambda^{q}u_{x}\Lambda^{q}(uu_{xx})
+\Lambda^{q}u_{xx}\Lambda^{q}(u_xu_{xx})\Big)\,dx,
\end{align*}
and
\begin{align*}
J_3=&-k_1\int_{\Bbb S}\Lambda^qu_x\Lambda^q(2u_xu^2_{xx}+u_x^2u_{xxx})_x\,dx=k_1\int_{\Bbb S}\Lambda^qu_{xx}\Lambda^q(u^2_xu_{xx})_x\,dx\\
=&-k_1\int_{\Bbb S}\left\{[\Lambda^{q+1},u^2_{x}]u_{xx}\Lambda^{q+1}u_x+(u_x)^2\Lambda^{q+1}u_{x}\Lambda^{q+1}u_{xx}-
\Lambda^qu_{x}\Lambda^q(u^2_xu_{xx})\right\}\,dx.
\end{align*}
Applying Lemma \ref{l2.4}, the Cauchy-Schwartz inequality and observing the fact that $H^q$ is
 a Banach algebra for $q>1/2$, we obtain
\begin{align*}
J_1\le c(1+\|u\|_{H^{q+2}})\|u\|^3_{H^{q+2}}\le c(\|u\|^4_{H^{q+2}}+\|u\|^3_{H^{q+2}}),
\end{align*}
and
\begin{align*}
J_2\le &c(1+\|u\|_{H^{q+2}})\left(\|u\|_{H^{q+2}}(\|u_x\|_{L^{\infty}}\|u_{xx}\|_{H^q}
+\|u\|_{H^{q+1}}\|u_{xx}\|_{L^{\infty}})
+3\|u\|^3_{H^{q+2}}\right),\\
\le &c(\|u\|^4_{H^{q+2}}+\|u\|^3_{H^{q+2}}),\\
J_3\le &c\left(\|u_x\|_{H^{q+1}}(2\|u_xu_{xx}\|_{L^{\infty}}\|u_{xx}\|_{H^{q}}
+\|u_x\|^2_{H^{q+1}}\|u_{xx}\|_{L^{\infty}})+3\|u\|^4_{H^{q+2}}\right)\\
\le &c(\|u\|^4_{H^{q+2}}+\|u\|^3_{H^{q+2}}).
\end{align*}
Collecting above expressions, we have
\begin{align}\label{e3.6}
\frac1{2}\dfrac{d}{dt}(\mu^2(\Lambda^qu_x)+\|\Lambda^qu_{xx}\|^2_{L^2}+\varepsilon\|u_{xxx}\|^2_{H^q})\le c(\|u\|^4_{H^{q+2}}+\|u\|^3_{H^{q+2}}).
\end{align}
Whence from \eqref{e3.5} and \eqref{e3.6}, one finds that
\begin{align*}
\dfrac{d}{dt}&(\mu^2(\Lambda^qu)+\mu^2(\Lambda^qu_x)+\|\Lambda^qu_x\|^2_{L^2}
+\|\Lambda^qu_{xx}\|^2_{L^2}+\varepsilon\|u_{xx}\|^2_{H^q}
+\varepsilon\|u_{xxx}\|^2_{H^q})\\&\le c(\|u\|^4_{H^{q+2}}+\|u\|^3_{H^{q+2}})
\le c(1+\|u\|^2_{H^{q+2}})^2.
\end{align*}
Integrating the above inequality from $0$ to $t$ yields
\begin{align*}
\mu^2&(\Lambda^qu)(t)+\mu^2(\Lambda^qu_x)(t)+\|\Lambda^qu_x(t)\|^2_{L^2}
+\|\Lambda^qu_{xx}(t)\|^2_{L^2}+\varepsilon\|u_{xx}(t)\|^2_{H^q}
+\varepsilon\|u_{xxx}(t)\|^2_{H^q}\\
\le& \mu^2(\Lambda^qu_0)+\mu^2(\Lambda^qu_{0,x})+\|\Lambda^qu_{0,x}\|^2_{L^2}
+\|\Lambda^qu_{0,xx}\|^2_{L^2}+\varepsilon\|u_{0,xx}\|^2_{H^q}
+\varepsilon\|u_{0,xxx}\|^2_{H^q}\\&+ c\int_0^t\left(1+\|u(\tau)\|^2_{H^{q+2}}\right)^2\,d\tau.
\end{align*}
In view of the equivalence of the norms $\|\cdot\|_{\mu}$ and $\|\cdot\|_{H^1}$, we get the result of \eqref{e3.3}.

In order to estimate the norm of $u_t$, applying the operator $\Lambda^qu_t\Lambda^q$ to the equation in \eqref{e3.1} and integrating the result with respect to $x$ lead to
 \begin{align*}
&\int_{\Bbb S}\Lambda^qu_t\Big(\Lambda^q(\mu-\partial^2_x)u_t+\varepsilon\Lambda^qu_{xxxxt}\Big)\,dx
=\mu^2(\Lambda^qu_t)+\|\Lambda^qu_{tx}\|^2_{L^2}+\varepsilon\|\Lambda^qu_{xxt}\|^2_{L^2},\\
&\left|(4k_1\mu(u)+2k_2)\int_{\Bbb S}\Lambda^qu_t\Lambda^q(u_xu_{xx})\,dx\right|
\le c(1+\|u\|_{H^{q+1}})\|u_t\|_{H^{q+1}}\|u\|^2_{H^{q+1}},\\
&\left|-2\mu(u)(k_1\mu(u)+k_2)\int_{\Bbb S}\Lambda^qu_t\Lambda^q(u_x)\,dx\right|
\le c(1+\|u\|_{H^{q+1}})\|u_t\|_{H^{q+1}}\|u\|_{H^{q+1}},\\
&\left|(2k_1\mu(u)+k_2)\int_{\Bbb S}\Lambda^qu_t\Lambda^q(uu_{xxx})\,dx\right|\\&\quad=\left|-(2k_1\mu(u)+k_2)\int_{\Bbb S}(\Lambda^qu_{tx}\Lambda^q(uu_{xx})+\Lambda^qu_t\Lambda^q(u_xu_{xx}))\,dx\right|\\
&\quad\le c\|u_{tx}\|_{H^{q}}\left(\|u\|^3_{H^{q+2}}+\|u\|^2_{H^{q+2}}\right),
\end{align*}
and
\begin{align*}
\left|-k_1\int_{\Bbb S}\Lambda^qu_t\Lambda^q(2u_xu^2_{xx}+u_x^2u_{xxx})\,dx\right|=\left|k_1\int_{\Bbb S}
\Lambda^qu_{tx}\Lambda^q(u^2_xu_{xx})\,dx\right|\le c\|u_{tx}\|_{H^{q}}\|u\|^3_{H^{q+2}}.
\end{align*}
Similarly, in view of the equivalence of the norms $\|\cdot\|_{\mu}$ and $\|\cdot\|_{H^1}$, it follows that
\begin{align*}
\|u_t\|^2_{H^{q+1}}\le\mu^2(\Lambda^qu_t)+\|\Lambda^qu_{tx}\|^2_{L^2}+\varepsilon\|\Lambda^qu_{xxt}\|^2_{L^2}
\le c \|u_t\|_{H^{q+1}}\left(\|u\|^3_{H^{q+2}}+\|u\|^2_{H^{q+2}}\right).
\end{align*}
Applying the Cauchy inequality, we deduce that
\begin{align*}
\|u_t\|^2_{H^{q+1}}\le \dfrac1{2}\|u_t\|^2_{H^{q+1}}+c^2\left(\|u\|^3_{H^{q+2}}+\|u\|^2_{H^{q+2}}\right)^2.
\end{align*}
Thus, the above inequality implies that
\begin{align*}
\|u_t\|^2_{H^{q+1}}\le C\left(\|u\|^3_{H^{q+2}}+\|u\|^2_{H^{q+2}}\right)^2.
\end{align*}
This completes the proof of this theorem.
\end{proof}

To show the existence of the solutions to the initial value problem
(\ref{cauchy}), we regularize its initial data $u_0$.
For any fixed real number $s>5/2$, suppose that the function $u_0\in
H^s$, and let $u_0^{\varepsilon}$ be the convolution
$u_0^{\varepsilon}=\phi^{\varepsilon}\ast u_0$ of the functions
$\phi^{\varepsilon}(x)=\varepsilon^{-1/16}\phi(\varepsilon^{-1/16}x)$
such that the Fourier transform $\hat{\phi}$ of $\phi$ satisfies
$\hat{\phi}\in C^{\infty}_0,\hat{\phi}(\xi)\ge0$ and
$\hat{\phi}(\xi)=1$ for any $\xi\in(-1,1)$. Then it follows from
Theorem \ref{t3.2} that for each $\varepsilon$ with
$0<\varepsilon<1/16 $ the Cauchy problem
\begin{align*}
\left\{
 \begin{aligned}
&\mu_t(u)-u_{xxt}+\varepsilon u_{xxxxt}=-k_1\big((2\mu(u)u-u^2_x)m\big)_x-k_2(2mu_x+um_x), \,\, t>0,  \,\, x \in \mathbb{R}, \\
&u(0,x)=u_0^{\varepsilon}(x), \quad x \in \mathbb{R},
\end{aligned} \right.
\end{align*}
which is equivalent to the following problem:
\begin{align}\label{r3.1}
\left\{
 \begin{aligned}
&u_t-\varepsilon u_{xxt}=-\big(2k_1\mu(u)u+k_2u\big)u_x+\frac{k_1}{3}\big(u^3_x+\mu(u^3_x)\big)\\
&\qquad\qquad\qquad-\partial_x(\mu-\partial_x^2)^{-1}
\left[k_1\big(2\mu^2(u)u+\mu(u)u_x^2\big)+k_2\big(2\mu(u)u+\fr 1{2}u^2_x\big)\right], \\
&u(0,x)=u_0^{\varepsilon}(x), \quad x \in \mathbb{R},
\end{aligned} \right.
\end{align}
and has a unique solution $u^{\varepsilon}(x,t)\in
C^{\infty}([0,\infty);H^{\infty})$. To show that $u^{\varepsilon}$
converges to a solution of the problem (\ref{cauchy}), we first
demonstrate the properties of the initial data $u_0^{\varepsilon}$ in
the following theorem. The proof is similar to that of Lemma 5 in
\cite{bon}.
\begin{lemma}\label{t3.3}
Under the above assumptions, there hold
\begin{align}
&\|u_0^{\varepsilon}\|_{H^q}\le
c,&if\quad q\le s,\label{i3.1}\\
&\|u_0^{\varepsilon}\|_{H^q} \le c\varepsilon^{(s-q)/16}, &if\quad
q> s,\label{i3.2}\\
 &\|u_0^{\varepsilon}-u_0\|_{H^q} \le
c\varepsilon^{(s-q)/16}, &if\quad q\le s,\label{i3.3}\\
&\|u_0^{\varepsilon}-u_0\|_{H^s}=o(1),\label{i3.4}
\end{align}
for any $\varepsilon$ with $0<\varepsilon<1/16$, where $c$ is a
constant independent of $\varepsilon$.
\end{lemma}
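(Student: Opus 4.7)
The plan is to pass to Fourier variables and exploit the specific properties of the mollifier $\phi^{\varepsilon}$. Since $\phi^{\varepsilon}(x)=\varepsilon^{-1/16}\phi(\varepsilon^{-1/16}x)$, one has $\widehat{\phi^{\varepsilon}}(\xi)=\hat\phi(\varepsilon^{1/16}\xi)$, and hence
\[
\widehat{u_0^{\varepsilon}}(\xi)=\hat\phi(\varepsilon^{1/16}\xi)\,\hat u_0(\xi), \qquad \widehat{u_0^{\varepsilon}-u_0}(\xi)=\bigl(\hat\phi(\varepsilon^{1/16}\xi)-1\bigr)\hat u_0(\xi).
\]
Two features will be used repeatedly: the uniform bound $|\hat\phi|\le C$ (which follows from $\hat\phi\in C_0^{\infty}$), and the fact that $\hat\phi(\xi)=1$ on $|\xi|<1$, so $\hat\phi(\varepsilon^{1/16}\xi)-1$ is supported in $\{|\xi|\ge\varepsilon^{-1/16}\}$. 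All four estimates will then follow from Plancherel's identity $\|f\|_{H^q}^2=\int(1+\xi^2)^q|\hat f(\xi)|^2\,d\xi$.

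For \eqref{i3.1}, I would simply bound $|\hat\phi(\varepsilon^{1/16}\xi)|^2$ by a constant and use $(1+\xi^2)^q\le(1+\xi^2)^s$ when $q\le s$, giving $\|u_0^{\varepsilon}\|_{H^q}\le C\|u_0\|_{H^s}$. For \eqref{i3.2}, since $\hat\phi$ has compact support, $\hat\phi(\varepsilon^{1/16}\xi)$ vanishes on $\{|\xi|\ge K\varepsilon^{-1/16}\}$ for some $K$, so on its support I estimate $(1+\xi^2)^{q-s}\le C\varepsilon^{-(q-s)/8}$ for $q>s$, and write $(1+\xi^2)^q=(1+\xi^2)^{q-s}(1+\xi^2)^s$ to factor out $\|u_0\|_{H^s}^2$.

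For \eqref{i3.3}, the restriction of the support of $\hat\phi(\varepsilon^{1/16}\xi)-1$ to $\{|\xi|\ge\varepsilon^{-1/16}\}$ lets me use the reverse estimate: on that set, when $q\le s$, one has $(1+\xi^2)^{q-s}\le C\varepsilon^{(s-q)/8}$, so that
\[
\|u_0^{\varepsilon}-u_0\|_{H^q}^2\le C\varepsilon^{(s-q)/8}\int_{|\xi|\ge\varepsilon^{-1/16}}(1+\xi^2)^s|\hat u_0(\xi)|^2\,d\xi\le C\varepsilon^{(s-q)/8}\|u_0\|_{H^s}^2,
\]
and taking square roots gives the advertised $\varepsilon^{(s-q)/16}$.

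Finally, \eqref{i3.4} is the delicate endpoint $q=s$, where the exponent $(s-q)/16$ vanishes and the argument above only yields boundedness. Here I would use the dominated convergence theorem: the pointwise limit $\hat\phi(\varepsilon^{1/16}\xi)\to\hat\phi(0)=1$ holds for every fixed $\xi$, the integrand $|\hat\phi(\varepsilon^{1/16}\xi)-1|^2(1+\xi^2)^s|\hat u_0(\xi)|^2$ is majorized by the integrable function $(|\hat\phi|_{L^{\infty}}+1)^2(1+\xi^2)^s|\hat u_0(\xi)|^2$, and therefore the integral tends to zero. The only genuine obstacle is this endpoint case, since no power of $\varepsilon$ controls it; all other statements are direct consequences of the two Fourier-side mollification facts.
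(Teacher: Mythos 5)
Your proof is correct and is essentially the argument the paper is pointing to: the paper offers no proof of this lemma, only the remark that it is ``similar to that of Lemma 5 in \cite{bon}'', and Bona--Smith's Lemma 5 is established by exactly this Plancherel computation with the two mollifier facts you isolate ($|\hat\phi|$ bounded with compact support, and $\hat\phi(\varepsilon^{1/16}\xi)-1$ vanishing for $|\xi|<\varepsilon^{-1/16}$). The only cosmetic adjustment needed is that in the periodic setting the integrals over $\xi$ should be read as sums over integer frequencies, with $\widehat{\phi^{\varepsilon}}$ evaluated at those frequencies; this changes nothing in the estimates.
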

Combining the estimates in Lemma \ref{t3.3} and the a priori estimates in Theorem \ref{t3.2}, we
shall evaluate norms of the function $u^{\varepsilon}$ in the
following theorem, which will be used to show the convergence of
$\{u^{\varepsilon}\}$.
\begin{lemma}\label{t3.4}
There exist constants $c_1,c_2$ and $M$ such that the following
inequalities hold for any $\varepsilon$ sufficiently small and
$t<1/(cM)$:
\begin{align*}
\|u^{\varepsilon}\|_{H^{s}}&\le\dfrac{c_1}{(1-cMt)^{c_2}},\\
\|u^{\varepsilon}\|_{H^{s+p}}&\le\dfrac{c_1\varepsilon^{-p/16}}{(1-cMt)^{c_2}},\quad p>0, \\
\|u^{\varepsilon}_t\|_{H^{s+p}}&\le\dfrac{c_4\varepsilon^{-3(p+1)/16}}{(1-cMt)^{c_3}},\quad
p>-2.
\end{align*}
\end{lemma}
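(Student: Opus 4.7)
My strategy is to combine the two a priori estimates of Theorem~\ref{t3.2} (inequalities~\eqref{e3.3} and~\eqref{e3.4}) with the mollifier bounds of Lemma~\ref{t3.3}, solving a nonlinear integral inequality first at the $H^s$ level to produce the uniform lifespan $[0, 1/(cM))$, and then bootstrapping upward to the higher Sobolev norms and to $u^\varepsilon_t$.

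\emph{Step 1: the $H^s$--bound.} I would apply \eqref{e3.3} with $q = s-2 > 1/2$. Together with \eqref{i3.1}--\eqref{i3.2} of Lemma~\ref{t3.3}, the data term $\int_{\mathbb S}[(\Lambda^{s} u_0^\varepsilon)^2 + \varepsilon(\Lambda^{s+1} u_0^\varepsilon)^2]\,dx$ is controlled by a constant $M$ independent of $\varepsilon$ (since $\varepsilon \|u_0^\varepsilon\|_{H^{s+1}}^2 \le c^2 \varepsilon^{7/8} \le c^2$). Setting $\Phi(t)= 1 + \|u^\varepsilon(t)\|_{H^s}^2$, the equivalence of $\|\cdot\|_\mu$ and $\|\cdot\|_{H^1}$ from Section~3 converts \eqref{e3.3} into a scalar nonlinear Gronwall inequality $\Phi(t) \le M + c\int_0^t \Phi(\tau)^2\,d\tau$. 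This is solved explicitly by a Riccati-type integration, yielding $\Phi(t) \le M/(1-cMt)$ on $[0, 1/(cM))$ and hence the first estimate with exponent $c_2 = 1/2$.

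\emph{Step 2: the $H^{s+p}$--bound for $p > 0$.} I would apply \eqref{e3.3} again, now with $q + 2 = s + p$; this is legitimate because $u^\varepsilon \in C^\infty([0,\infty); H^\infty)$. By \eqref{i3.2}, the data piece is dominated by $c\varepsilon^{-p/8}$ (the $\varepsilon \|u_0^\varepsilon\|_{H^{s+p+1}}^2$ contribution absorbs into this for $\varepsilon$ small). The essential refinement needed is to re-examine the derivation of \eqref{e3.3}: in the commutator estimates coming from Lemmas~\ref{l2.3}--\ref{l2.4}, the factors $\|u_x\|_{L^\infty}$, $\|u_{xx}\|_{L^\infty}$ and $\|u_xu_{xx}\|_{L^\infty}$ can be majorized by a polynomial in $\|u^\varepsilon\|_{H^s}$, so that one obtains the linearized differential inequality
\[
\tfrac{d}{dt}\|u^\varepsilon\|_{H^{s+p}}^2 \;\le\; c\,P\bigl(\|u^\varepsilon\|_{H^s}\bigr)\,\|u^\varepsilon\|_{H^{s+p}}^2
\]
for a suitable polynomial $P$. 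Plugging in the Step~1 bound $\|u^\varepsilon\|_{H^s} \le c_1/(1-cMt)^{c_2}$ and applying Gronwall converts the exponential factor into a polynomial in $(1-cMt)^{-1}$ (after enlarging $c_2$), which together with the initial size $c\varepsilon^{-p/8}$ produces the claimed bound with $\varepsilon^{-p/16}$.

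\emph{Step 3: the time--derivative bound.} For $p > -2$ I would invoke \eqref{e3.4} with $q+1 = s+p$, giving
\[
\|u^\varepsilon_t\|_{H^{s+p}} \;\le\; c\,\|u^\varepsilon\|_{H^{s+p+1}}^2\bigl(1+\|u^\varepsilon\|_{H^{s+p+1}}\bigr).
\]
Feeding in the Step~2 bound (or Step~1, if $p+1\le 0$) yields a dominant factor $\varepsilon^{-2(p+1)/16}\cdot\varepsilon^{-(p+1)/16} = \varepsilon^{-3(p+1)/16}$, which is exactly the exponent claimed, with the denominator $(1-cMt)^{c_3}$ for $c_3 = 3c_2$.

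\emph{Main obstacle.} The delicate point is Step~2: a naive application of \eqref{e3.3} with $q = s+p-2$ gives a nonlinear Gronwall with initial size $O(\varepsilon^{-p/8})$, whose lifespan $1/(c\varepsilon^{-p/8})$ shrinks to zero as $\varepsilon \to 0$. To obtain a lifespan independent of $\varepsilon$ one must carefully redistribute norms in the commutator terms so that the highest-order factor appears only quadratically (times a polynomial in the $H^s$-norm already controlled by Step~1). This is where the bootstrap structure of the Bona--Smith method becomes essential, and it is the technical heart of the lemma; the Riccati/Gronwall integrations themselves are routine.
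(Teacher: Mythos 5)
Your proposal is correct and follows essentially the same route as the paper: apply \eqref{e3.3} with $q=s-2$ and solve the resulting Riccati-type integral inequality to obtain the uniform lifespan $[0,1/(cM))$, then take $q=s+p-2$ together with Lemma \ref{t3.3} for the size of the mollified data, and finally feed the result into \eqref{e3.4} with $q=s+p-1$ to produce the exponent $\varepsilon^{-3(p+1)/16}$. The only substantive difference is that your Step 2 makes explicit the linearization of the higher-norm estimate (quadratic in $\|u^{\varepsilon}\|_{H^{s+p}}$ with coefficients controlled by the already-bounded $H^{s}$ norm), which the paper compresses into the phrase ``in a similar way''; this is indeed the correct reading, since a literal reuse of the quartic inequality \eqref{e3.3} at the level $s+p$ would only give a lifespan of order $\varepsilon^{p/8}$.
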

\begin{proof}
Choose a fixed number $q=s-2$. It follows from
(\ref{e3.3}) that
\begin{align*}
&\int_{\mathbb{S}}(\Lambda^su^{\varepsilon})^2\
dx\le\int_{\mathbb{S}}\Big((\Lambda^su_0^{\varepsilon})^2+\varepsilon(\Lambda^su_{0,x}^{\varepsilon})^2\Big)\
dx+c\int^t_0 \Big(\int_{\mathbb{S}}(\Lambda^su^{\varepsilon})^2\
dx+1\Big)^2 \;d\tau.
\end{align*}
Then the following inequality
\begin{align*}
\|u^{\varepsilon}\|^2_{H^s}=\int_{\mathbb{S}}(\Lambda^su^{\varepsilon})^2\
dx\le \dfrac{M_s+1}{1-ct(M_s+1)}\le\dfrac{M}{1-cMt},
\end{align*}
 holds for any $t\in[0,1/(cM))$, where
$$M_s=\int_{\mathbb{S}}\Big((\Lambda^su_0^{\varepsilon})^2+\varepsilon(\Lambda^su_{0,x}^{\varepsilon})^2\Big)\
dx,\qquad\text{and}\qquad M=1+M_s.$$

Let $q=s+p-2$. In a similar way, applying Lemma \ref{t3.3} to \eqref{e3.3}, one may obtain the inequality
\begin{align*}
\|u^{\varepsilon}\|_{H^{s+p}}\le\dfrac{c_1(1+\varepsilon^{-p/8}
+\varepsilon^{1-(p+1)/8})^{\frac1{2}}}{(1-cMt)^{c_2}}
\le\dfrac{c_1\varepsilon^{-p/16}}{(1-cMt)^{c_2}},
\end{align*}
for some constant $c_1$. Then applying the above results and Lemma \ref{t3.3} to \eqref{e3.4} with $q=s+p-1$, we deduce that
\begin{align*}
\|u^{\varepsilon}_t\|_{H^{s+p}}\le\dfrac{c_4\varepsilon^{-3(p+1)/16}}{(1-cMt)^{c_3}},
\end{align*}
for some constant $c_4$. Thus this completes the proof of the theorem.
\end{proof}

We now show that $\{u^{\varepsilon}\}$ is a Cauchy sequence. Let
$u^{\varepsilon}$ and $u^{\delta}$ be solutions of (\ref{r3.1}),
corresponding to the parameters $\varepsilon$ and $\delta$,
respectively, with $0<\varepsilon<\delta<1/16$, and let
$w=u^{\varepsilon}-u^{\delta},\ f=u^{\varepsilon}+u^{\delta}$. Then
$w$ satisfies the following problem
\begin{equation}\label{m3.1}
\left\{
 \begin{aligned}
&w_t-\varepsilon w_{xxt}+(\delta-\varepsilon)u^{\delta}_{xxt}
\\&=-2k_1\mu(w)u^{\varepsilon}u^{\varepsilon}_x
-\Big(2k_1\mu(u^{\delta})+k_2\Big)\Big(wu^{\varepsilon}_x+u^{\delta}w_x\Big)\\&\quad
 +\frac{k_1}{3}w_x\Big((u^{\varepsilon}_x)^2+u_x^{\varepsilon}u_x^{\delta}+(u_x^{\delta})^2\Big)
 -\frac{k_1}{3}\mu\left(w_x\Big((u^{\varepsilon}_x)^2+u_x^{\varepsilon}u_x^{\delta}
 +(u_x^{\delta})^2\Big)\right)\\&\quad
-\partial_x(\mu-\partial_x^2)^{-1}
\bigg\{
k_1\Big[
2\mu(w)\mu(f)u^{\varepsilon}+2\mu^2(u^{\delta})w
+\mu(w)(u^{\varepsilon}_x)^2+\mu(u^{\delta})w_xf_x\Big]
\\&\qquad\qquad\qquad\qquad
+k_2\Big[2\mu(w)u^{\varepsilon}+2\mu(u^{\delta})w
+\frac1{2}w_xf_x
\Big]\bigg\}, \\
&w(0,x)=u_0^{\varepsilon}(x)-u_0^{\delta}(x), \quad x \in \mathbb{R}.
\end{aligned} \right.
\end{equation}
\begin{theorem}\label{t3.5}
There exists $T>0$, such that $\{u^{\varepsilon}\}$ is a Cauchy sequence
in the space $C([0,T);H^{s}), s > 5/2$.
\end{theorem}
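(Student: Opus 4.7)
The plan is the classical Bona--Smith parameter-regularization strategy: I would first exhibit strong convergence of $\{u^{\varepsilon}\}$ at the lower Sobolev level $H^{s-1}$ via an energy estimate on the difference equation \eqref{m3.1}, and then upgrade to convergence in $H^{s}$ by running the same estimate one derivative higher, leveraging the uniform high-regularity bounds from Lemma \ref{t3.4} together with the $H^{s}$-convergence of the mollified initial data $u_{0}^{\varepsilon}$ supplied by \eqref{i3.4}.

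First I would apply $\Lambda^{s-1}$ to \eqref{m3.1}, take the $L^{2}$-inner product with $\Lambda^{s-1}w$, and integrate over $\mathbb{S}$. Using self-adjointness of $\mu-\partial_{x}^{2}$ together with the norm equivalence $\|\cdot\|_{\mu}\sim\|\cdot\|_{H^{1}}$, the left-hand side produces
\begin{equation*}
\tfrac{1}{2}\tfrac{d}{dt}\Bigl(\mu^{2}(\Lambda^{s-1}w)+\|\Lambda^{s-1}w_{x}\|_{L^{2}}^{2}+\varepsilon\|\Lambda^{s-1}w_{xx}\|_{L^{2}}^{2}\Bigr),
\end{equation*}
which is equivalent to $\tfrac{d}{dt}\|w\|_{H^{s}}^{2}$ up to a viscous correction. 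The inhomogeneity $(\delta-\varepsilon)u_{xxt}^{\delta}$ paired against $\Lambda^{s-1}w$ contributes at most $|\delta-\varepsilon|\,\|u_{t}^{\delta}\|_{H^{s+1}}\,\|w\|_{H^{s-1}}$, and Lemma \ref{t3.4} with $p=1$ gives $\|u_{t}^{\delta}\|_{H^{s+1}}\le c\,\delta^{-3/8}$, so this forcing is $O(\delta^{5/8}\|w\|_{H^{s-1}})$ and vanishes as $\delta\to 0$.

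The nonlinear and nonlocal terms on the right-hand side of \eqref{m3.1} I would estimate using Lemma \ref{l2.3} (Banach-algebra property of $H^{s-1}$, valid since $s>5/2$), Lemma \ref{l2.4} (Kato--Ponce commutator bound) for transport-type pieces such as $u^{\delta}w_{x}$ and the cubic contribution $w_{x}\bigl((u_{x}^{\varepsilon})^{2}+u_{x}^{\varepsilon}u_{x}^{\delta}+(u_{x}^{\delta})^{2}\bigr)$, and the order-$(-1)$ boundedness of $\partial_{x}(\mu-\partial_{x}^{2})^{-1}$ on $H^{s-1}$. All prefactors involving $\mu(u^{\varepsilon}),\mu(u^{\delta}),u^{\varepsilon},u^{\delta}$ and their derivatives are uniformly bounded on $[0,T_{0}]$ by Lemma \ref{t3.4}. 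Collecting these, I arrive at
\begin{equation*}
\tfrac{d}{dt}\|w(t)\|_{H^{s-1}}^{2}\le C\,\|w(t)\|_{H^{s-1}}^{2}+C\delta^{5/8},\qquad t\in[0,T_{0}],
\end{equation*}
with $C$ and $T_{0}$ independent of $\varepsilon,\delta$. Since $\|u_{0}^{\varepsilon}-u_{0}^{\delta}\|_{H^{s-1}}\le c\,\delta^{1/16}$ by \eqref{i3.3}, Gr\"onwall's lemma then delivers $\sup_{t\in[0,T_{0}]}\|w(t)\|_{H^{s-1}}\to 0$ as $\delta\to 0$, so $\{u^{\varepsilon}\}$ is Cauchy in $C([0,T_{0}];H^{s-1})$.

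To promote this to Cauchyness in $H^{s}$, I would rerun the energy procedure with $\Lambda^{s}$ in place of $\Lambda^{s-1}$. The forcing term now requires $\|u_{t}^{\delta}\|_{H^{s+2}}$, which by Lemma \ref{t3.4} with $p=2$ is $O(\delta^{-9/16})$, yielding the still-small contribution $O(\delta^{7/16})$. The nonlinear pieces close by the same combination of Lemma \ref{l2.3} and Lemma \ref{l2.4}, and the top-order commutators $[\Lambda^{s},u^{\delta}]w_{x}$ and $[\Lambda^{s},(u_{x}^{\varepsilon})^{2}]w_{x}$ are controlled by $\|w\|_{H^{s}}$ times uniform constants. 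Gr\"onwall then gives a bound proportional to $\|u_{0}^{\varepsilon}-u_{0}^{\delta}\|_{H^{s}}+\delta^{7/16}$, which tends to zero by \eqref{i3.4}. The principal technical obstacle lies in this last step: the delicate balance between the mollifier scale $\varepsilon^{1/16}$ of Lemma \ref{t3.3} and the blow-up rate $\delta^{-3(p+1)/16}$ in Lemma \ref{t3.4} is precisely what keeps $(\delta-\varepsilon)\,\|u_{t}^{\delta}\|_{H^{s+p}}$ small for every $p$ appearing in the energy identity, and a less carefully calibrated regularization would break the Cauchy estimate at top order.
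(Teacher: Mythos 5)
Your overall Bona--Smith architecture (a lower-order Cauchy estimate followed by a top-order energy estimate that borrows smallness from the lower level) is exactly the paper's strategy, but your choice of $H^{s-1}$ as the intermediate rung creates a genuine gap at the top order. The problematic term is the cubic one: in the $H^{s}$ energy identity the commutator $[\Lambda^{s},(u_{x}^{\delta})^{2}]w_{x}$ is \emph{not} controlled by $\|w\|_{H^{s}}$ times a uniform constant, contrary to what you assert. The Kato--Ponce bound of Lemma \ref{l2.4} produces the piece $\|\Lambda^{s}(u_{x}^{\delta})^{2}\|_{L^{2}}\,\|w_{x}\|_{L^{\infty}}$, and $\|(u_{x}^{\delta})^{2}\|_{H^{s}}\lesssim\|u^{\delta}\|_{H^{s}}\|u^{\delta}\|_{H^{s+1}}=O(\delta^{-1/16})$ by Lemma \ref{t3.4}; this blow-up must be absorbed by the smallness of $\|w_{x}\|_{L^{\infty}}$ coming from the first step. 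But your first step only yields $\|w\|_{H^{s-1}}=O(\delta^{1/16})$ (the mollification error of \eqref{i3.3} at level $q=s-1$ is exactly $\delta^{1/16}$ and cannot be improved), so the product is $O(\delta^{1/16})\cdot O(\delta^{-1/16})=O(1)$. Your Gr\"onwall inequality at top order then reads $\frac{d}{dt}\|w\|_{H^{s}}^{2}\le C\|w\|_{H^{s}}^{2}+C\|w\|_{H^{s}}$ with an $O(1)$ linear forcing, which gives $\|w\|_{H^{s}}\le(\|w_{0}\|_{H^{s}}+Ct)e^{Ct}$ and does \emph{not} tend to zero. The tension you correctly identify for the viscous forcing $(\delta-\varepsilon)u_{xxt}^{\delta}$ reappears, fatally, in the cubic nonlinearity.

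The paper resolves this by taking the intermediate index much lower: it first proves convergence in $H^{q+1}$ with $1/2<q<\min\{1,s-2\}$, where \eqref{i3.3} gives the much better rate $\|w\|_{H^{q+1}}=O(\delta^{(s-q-1)/16})$ with $s-q-1>1$. Since $q+1>3/2$ still controls $\|w_{x}\|_{L^{\infty}}$, the dangerous product becomes $\|w\|_{H^{q+1}}\|u^{\delta}\|_{H^{s}}\|u^{\delta}\|_{H^{s+1}}=O(\delta^{(s-q-2)/16})=o(1)$, which is precisely the term $c\|w\|_{H^{s}}\|w\|_{H^{q+1}}\|u^{\delta}\|_{H^{s}}\|u^{\delta}\|_{H^{s+1}}$ and the exponent $\alpha=\min\{7/16,(s-q-2)/16\}>0$ in the paper's top-order estimate. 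To repair your argument, replace $H^{s-1}$ by any $H^{\sigma}$ with $3/2<\sigma<s-1$ (so that the mollification rate $\delta^{(s-\sigma)/16}$ strictly beats $\delta^{-1/16}$), or equivalently adopt the paper's choice of $q$; the rest of your outline, including the treatment of the forcing term and the final use of \eqref{i3.4}, then goes through.
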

\begin{proof}
For a constant $q$ with $ 1/2<q<\min\{1,s-2\}$, multiplying
$\Lambda^qw\Lambda^q$ to both sides of the equation in (\ref{m3.1}) and then
integrating with respect to $x$ over  ${\mathbb S}$, we obtain
\begin{align*}
&\int_{\Bbb S}\Lambda^qw\Big(
\Lambda^q(w_t-\varepsilon w_{xxt})
\Big)\,dx
=\frac1{2}\dfrac{d}{dt}\left(
\|\Lambda^qw\|^2_{L^2}+\varepsilon\|\Lambda^qw_{x}\|^2_{L^2}
\right),\\
&\left|\int_{\Bbb S}\Lambda^qw\Lambda^qu^{\delta}_{
xxt}\,dx\right|
=\left|\int_{\Bbb S}\Lambda^qw_{x}\Lambda^qu^{\delta}_{
xt}\,dx\right|
\le c\|u_t^{\delta}\|_{H^{q+1}}\|w\|_{H^{q+1}}.
\end{align*}
Similar to the proof of \eqref{e3.3}, for the right hand side of the equation in (\ref{m3.1}) we obtain
 \begin{align*}
  &\bigg|\int_{\Bbb S}\Lambda^qw\Lambda^q\bigg\{\frac{k_1}{3}w_x\Big((u^{\varepsilon}_x)^2+u_x^{\varepsilon}u_x^{\delta}
  +(u_x^{\delta})^2\Big)
 -\frac{k_1}{3}\mu\left(w_x\Big((u^{\varepsilon}_x)^2+u_x^{\varepsilon}u_x^{\delta} +(u_x^{\delta})^2\Big)\right)\bigg\}\,dx\bigg|\\
 &\qquad \qquad \le c\|w\|^2_{H^{q+1}}\left(\|u^{\varepsilon}\|^2_{H^{q+1}}+\|u^{\delta}\|^2_{H^{q+1}}\right),
 \end{align*}
 and
 \begin{align*}
 &\bigg|\int_{\Bbb S}\Lambda^qw\Lambda^q\Big\{\partial_x(\mu-\partial_x^2)^{-1}
\bigg(
k_1\Big[
2\mu(w)\mu(f)u^{\varepsilon}+2\mu^2(u^{\delta})w
+\mu(w)(u^{\varepsilon}_x)^2+\mu(u^{\delta})w_xf_x\Big]
\\&\qquad\qquad\qquad\qquad\qquad
+k_2\Big[2\mu(w)u^{\varepsilon}+2\mu(u^{\delta})w
+\frac1{2}w_xf_x
\Big]\bigg)\bigg\}\,dx\bigg|\\
&\le c \|w\|_{H^{q}}\bigg\|k_1\Big[
2\mu(w)\mu(f)u^{\varepsilon}+2\mu^2(u^{\delta})w
+\mu(w)(u^{\varepsilon}_x)^2+\mu(u^{\delta})w_xf_x\Big]\\&\qquad\qquad\qquad
+k_2\Big[2\mu(w)u^{\varepsilon}+2\mu(u^{\delta})w
+\frac1{2}w_xf_x
\Big]\bigg\|_{H^{q-1}}\\&\le c
\|w\|^2_{H^{q+1}}\left(
\|u^{\varepsilon}\|^2_{H^{q+1}}+\|u^{\delta}\|^2_{H^{q+1}}+\|u^{\varepsilon}\|_{H^{q+1}}
+\|u^{\delta}\|_{H^{q+1}}\right).
 \end{align*}
Therefore, we deduce that for any $\tilde{T}\in(0,1/(cM))$, there is a constant $c$ depending on $\tilde{T}$ such that
 \begin{align}\label{e3.7}
&\frac1{2}\dfrac{d}{dt}\left(
\|\Lambda^qw\|^2_{L^2}+\varepsilon\|\Lambda^qw_{x}\|^2_{L^2}
\right)\nonumber\\
&\le\,c\|w\|^2_{H^{q+1}}\left(\|u^{\varepsilon}\|^2_{H^{q+1}}
+\|u^{\delta}\|^2_{H^{q+1}}+\|u^{\varepsilon}\|_{H^{q+1}}+\|u^{\delta}\|_{H^{q+1}}\right)
\\&\quad+(\delta-\varepsilon)\|u_t^{\delta}\|_{H^{q+1}}\|w\|_{H^{q+1}}.\nonumber
\end{align}
Then, differentiating the equation in (\ref{m3.1}) with respect to $x$,
applying $\Lambda^qw_x\Lambda^q$ to both sides of the resulting equation
and integrating with respect to $x$ over ${\mathbb S}$, one finds that
\begin{align*}
\left|\int_{\Bbb S}\Lambda^qw_x\Lambda^qu^{\delta}_{
xxxt}\,dx\right|\le c\|u_t^{\delta}\|_{H^{q+3}}\|w\|_{H^{q+1}},
\end{align*}
and
 \begin{align*}
 &\bigg|\int_{\Bbb S}\Lambda^qw_x\Lambda^q\Big\{\partial^2_x(\mu-\partial_x^2)^{-1}
\bigg(
k_1\Big[
2\mu(w)\mu(f)u^{\varepsilon}+2\mu^2(u^{\delta})w
+\mu(w)(u^{\varepsilon}_x)^2+\mu(u^{\delta})w_xf_x\Big]
\\&\qquad\qquad\qquad\qquad\qquad
+k_2\Big[2\mu(w)u^{\varepsilon}+2\mu(u^{\delta})w
+\frac1{2}w_xf_x
\Big]\bigg)\bigg\}\,dx\bigg|\\
&\le c \|w_x\|_{H^{q}}\left\|k_1\Big[
2\mu(w)\mu(f)u^{\varepsilon}+2\mu^2(u^{\delta})w
+\mu(w)(u^{\varepsilon}_x)^2+\mu(u^{\delta})w_xf_x\Big]\right.\\&\qquad\qquad\qquad\left.
+k_2\Big[2\mu(w)u^{\varepsilon}+2\mu(u^{\delta})w
+\frac1{2}w_xf_x
\Big]\right\|_{H^{q}}\\&\le c
\|w\|^2_{H^{q+1}}\left(
\|u^{\varepsilon}\|^2_{H^{q+1}}+\|u^{\delta}\|^2_{H^{q+1}}+\|u^{\varepsilon}\|_{H^{q+1}}
+\|u^{\delta}\|_{H^{q+1}}\right).
 \end{align*}
For the rest terms of the right hand side, we just estimate one term
\begin{align*}
&\left|\int_{\Bbb S}\Lambda^qw_x\Lambda^q\left((u_x^{\delta})^2w_{x}\right)_x\,dx\right|\\
&=\Big|\int_{\Bbb S}\Lambda^{q+1}w\left(
[\Lambda^{q+1},(u_x^{\delta})^2]w_{x}+(u_x^{\delta})^2\Lambda^{q+1}w_{x}
\right)\,dx
-\int_{\Bbb S}\Lambda^{q}w\Lambda^{q}\left((u_x^{\delta})^2w_{x}\right)\,dx\Big|.
\end{align*}
Then applying the Cauchy-Schwartz inequality and Lemma \ref{l2.4}, we obtain
\begin{align*}
&\left|\int_{\Bbb S}\Lambda^qw_x\Lambda^q\left((u_x^{\delta})^2w_{x}\right)_x\,dx\right|
\\&\le c\|w\|_{H^{q+1}}\left(\|2u_x^{\delta}u_{xx}^{\delta}\|_{L^{\infty}}\|\Lambda^qw_{x}\|_{L^2}
+\|\Lambda^{q+1}(u_x^{\delta})^2\|_{L^2}\|w_{x}\|_{L^{\infty}}\right)\\&\qquad
+\|2u_x^{\delta}u_{xx}^{\delta}\|_{L^{\infty}}\|w\|^2_{H^{q+1}}
+c\|w\|^2_{H^{q+1}}\|u^{\delta}\|^2_{H^{q+1}}
\\&\le c\|w\|^2_{H^{q+1}}\|u^{\delta}\|^2_{H^{q+2}}.
\end{align*}
Therefore, one finds that for any $\tilde{T}\in(0,1/(cM))$,
there is a constant $c$ depending on $\tilde{T}$ such that
 \begin{align}\label{e3.8}
 &\frac1{2}\dfrac{d}{dt}\left(
 \|\Lambda^qw_{x}\|^2_{L^2}+\varepsilon\|w_{xx}\|^2_{H^q}
 \right)\nonumber\\
&\le\,c\|w\|^2_{H^{q+1}}\left(\|u^{\varepsilon}\|^2_{H^{q+2}}
+\|u^{\delta}\|^2_{H^{q+2}}+\|u^{\varepsilon}\|_{H^{q+1}}+\|u^{\delta}\|_{H^{q+1}}\right)
\\&\quad+(\delta-\varepsilon)\|u_t^{\delta}\|_{H^{q+3}}\|w\|_{H^{q+1}},\nonumber
 \end{align}
for any $t\in[0,\tilde{T})$. Altogether, we obtain
\begin{align*}
&\frac1{2}\dfrac{d}{dt}\Big(
\|\Lambda^qw\|^2_{L^2}
+\|\Lambda^qw_{x}\|^2_{L^2}+\varepsilon\|w_{x}\|^2_{H^q}+\varepsilon\|w_{xx}\|^2_{H^q}
\Big)\\
&\le c\|w\|^2_{H^{q+1}}\left(\|u^{\varepsilon}\|^2_{H^{q+2}}
+\|u^{\delta}\|^2_{H^{q+2}}+\|u^{\varepsilon}\|_{H^{q+1}}+\|u^{\delta}\|_{H^{q+1}}\right)\\
&\quad+(\delta-\varepsilon)\|w\|_{H^{q+1}}\left(\|u_t^{\delta}\|_{H^{q+1}}+\|u_t^{\delta}\|_{H^{q+3}}\right).
\end{align*}
With Lemma \ref{t3.3} and Lemma \ref{t3.4} in hand and in view of the fact that
$1/2<q<\min\{1,s-2\}$, integrating the above inequality
with respect to $t$ leads to the estimate
\begin{align*}
\|w\|^2_{H^{q+1}}\le 2\|w_0\|^2_{H^{q+1}}+2\varepsilon\|w_0\|^2_{H^{q+2}}+2c
\int_0^t(\delta^{\gamma}\|w\|_{H^{q+1}}+\|w\|^2_{H^{q+1}})\,d\tau,
\end{align*}
for any $t\in[0,\tilde{T})$, where $\gamma=1$ if $s\ge q+3$, and $\gamma=(4+3s-3q)/16$ if $s<q+3$.
It follows from Gronwall's inequality and \eqref{i3.3} in Lemma \ref{t3.3} that
\begin{align}\label{e3.9}
\|w\|_{H^{q+1}}&\le(2\|w_0\|^2_{H^{q+1}}+2\varepsilon\|w_0\|^2_{H^{q+2}})^{1/2}e^{ct}
+\delta^{\gamma}(e^{ct}-1)\nonumber\\
&\le c\delta^{(s-q-1)/16}e^{ct}+\delta^{\gamma}(e^{ct}-1)
\end{align}
for some constant $c$ and $t\in[0,\tilde{T})$.

Next, multiplying
$\Lambda^{s-1}w\Lambda^{s-1}$ to both sides of the equation in (\ref{m3.1}) and then
integrating with respect to $x$, following the way of the estimate \eqref{e3.7}, we obtain
\begin{align*}
&\frac1{2}\dfrac{d}{dt}\left(
\|\Lambda^{s-1}w\|^2_{L^2}+\varepsilon\|\Lambda^{s-1}w_{x}\|^2_{L^2}
\right)\\
&\le\,c\|w\|^2_{H^{s}}\left(\|u^{\varepsilon}\|^2_{H^{s}}
+\|u^{\delta}\|^2_{H^{s}}+\|u^{\varepsilon}\|_{H^{s}}+\|u^{\delta}\|_{H^{s}}\right)+(\delta-\varepsilon)\|u_t^{\delta}\|_{H^{s}}\|w\|_{H^{s}}.
\end{align*}
In the following, differentiating the equation in (\ref{m3.1}) with respect to $x$,
applying $\Lambda^{s-1}w_x\Lambda^{s-1}$ to both sides of the resulting equation
and integrating with respect to $x$ over ${\mathbb S}$, we note that
\begin{align*}
&\left|\int_{\Bbb S}\Lambda^{s-1}w_x\Lambda^{s-1}\left((u_x^{\delta})^2w_{x}\right)_x\,dx\right|
\\&=\Big|\int_{\Bbb S}\Lambda^{s}w\left(
[\Lambda^{s},(u_x^{\delta})^2]w_{x}+(u_x^{\delta})^2\Lambda^{s}w_{x}
\right)\,dx
-\int_{\Bbb S}\Lambda^{s-1}w\Lambda^{s-1}\left((u_x^{\delta})^2w_{x}\right)\,dx\Big|
\\&\le c\left(\|w\|^2_{H^{s}}\|u^{\delta}\|^2_{H^{s}}
+\|w\|_{H^{s}}\|w\|_{H^{q+1}}\|u^{\delta}\|_{H^{s}}\|u^{\delta}\|_{H^{s+1}}\right).
\end{align*}
Therefore, following the way of the estimate in \eqref{e3.8}, we deduce that
\begin{align*}
 &\frac1{2}\dfrac{d}{dt}\left(
 \|\Lambda^sw\|^2_{L^2}+\varepsilon\|w\|^2_{H^{s+1}}
 \right)\\
&\le\,c\|w\|^2_{H^{s}}\left(\|u^{\varepsilon}\|^2_{H^{s}}
+\|u^{\delta}\|^2_{H^{s}}+\|u^{\varepsilon}\|_{H^{s}}+\|u^{\delta}\|_{H^{s}}\right)
\\&\quad+c\|w\|_{H^{s}}\|w\|_{H^{q+1}}\|u^{\delta}\|_{H^{s}}\|u^{\delta}\|_{H^{s+1}}
+(\delta-\varepsilon)\|u_t^{\delta}\|_{H^{s+2}}\|w\|_{H^{s}},
 \end{align*}
for some constant $c$ and $t\in[0,\tilde{T})$. All in all, it
follows from the above estimates, and the inequality in \eqref{e3.9}
and Lemma \ref{t3.4} that there exists a constant $c$ depending on
the $\tilde{T}\in(0,1/(cM))$ such that
\begin{align*}
 &\frac1{2}\dfrac{d}{dt}\left(
 \|w\|^2_{H^s}+\varepsilon\|w\|^2_{H^{s+1}}
 \right)\\
&\le\,c\|w\|^2_{H^{s}}\left(\|u^{\varepsilon}\|^2_{H^{s}}
+\|u^{\delta}\|^2_{H^{s}}+\|u^{\varepsilon}\|_{H^{s}}+\|u^{\delta}\|_{H^{s}}\right)
\\&\quad+c\|w\|_{H^{s}}\|w\|_{H^{q+1}}\|u^{\delta}\|_{H^{s}}\|u^{\delta}\|_{H^{s+1}}
+(\delta-\varepsilon)\left(\|u_t^{\delta}\|_{H^{s}}+\|u_t^{\delta}\|_{H^{s+2}}\right)\|w\|_{H^{s}}
\\&\le\,c\left(\delta^{\alpha}\|w\|_{H^{s}}+\|w\|^2_{H^{s}}\right),
 \end{align*}
where $\alpha=\min\{7/16,(s-q-2)/16\}>0$.  Therefore, integrating
the above inequality with respect to $t$ leads to the estimate
 \begin{align*}
\|w\|^2_{H^{s}}&\le 2\|w_0\|^2_{H^{s}}+2\varepsilon\|w_0\|^2_{H^{s+1}}+2c
\int_0^t(\delta^{\alpha}\|w\|_{H^{s}}+\|w\|^2_{H^{s}})\,d\tau,
\end{align*}
By Gronwall's inequality and \eqref{i3.2} in Theorem \ref{t3.3}, we
have
 \begin{align*}
\|w\|_{H^{s}}&\le(2\|w_0\|^2_{H^{s}}+2\varepsilon\|w_0\|^2_{H^{s+1}})^{1/2}e^{ct}
+\delta^{\alpha}(e^{ct}-1)\\
&\le c(\|w_0\|_{H^{s}}+\delta^{7/16})e^{ct}+\delta^{\alpha}(e^{ct}-1).
\end{align*}
Making use of \eqref{i3.4} in Lemma \ref{t3.3} and the above
equality, we deduce that
 $\|w\|_{H^s}\rightarrow0$ as $\varepsilon,\delta\rightarrow0$.

At last, we prove convergence of $\{u_t^{\varepsilon}\}$.
Multiplying both sides of the equation in (\ref{m3.1}) by
$\Lambda^{s-1}w_t\Lambda^{s-1}$, and integrating the resulting
equation with respect to $x$, one obtains the equation using the integration by parts
 \begin{align*}
 \|w_t\|^2_{H^{s-1}}+\varepsilon\|w_t\|^2_{H^s}&\le\,c\|w_t\|_{H^{s-1}}\|w\|_{H^{s}}
 \left(\|u^{\varepsilon}\|^2_{H^{s}}
+\|u^{\delta}\|^2_{H^{s}}+\|u^{\varepsilon}\|_{H^{s}}+\|u^{\delta}\|_{H^{s}}\right)
\\&\quad+(\delta-\varepsilon)\|u_t^{\delta}\|_{H^{s+1}}\|w_t\|_{H^{s-1}}\\
&\le\delta^{3/4}\|w_t\|^2_{H^{s-1}}+c\|w_t\|_{H^{s-1}}\|w\|_{H^{s}},
 \end{align*}
for some constant $c$ and $t\in[0,\tilde{T})$. Hence,
\begin{align*}
\|w_t\|_{H^{s-1}}\le c(\delta^{3/4}+\|w\|_{H^{s}}).
\end{align*}
Then it follows that $w_t\rightarrow0$ as $\varepsilon,\delta\rightarrow0$ in $H^{s-1}$ norm. This implies that both
 $\{u^{\varepsilon}\}$ and $\{u^{\varepsilon}_t\}$ are Cauchy sequences in the space $C([0,\tilde{T});H^s)$ and $C([0,\tilde{T});H^{s-1})$,
 respectively. Let $u(t,x)$ be the limit of the sequence $\{u^{\varepsilon}\}$. Taking the limit on both sides of the equation
  in \eqref{e3.1} as $\varepsilon\rightarrow0$, one shows that $u$ is the solution of the problem \eqref{cauchy}.
\end{proof}
The verification for the uniqueness of the solution $u$ follows the
technique to obtain the norm $\|w\|_{H^q}$ in Theorem \ref{t3.5}.
\begin{theorem}\label{t3.6}
Suppose that $u_0\in H^s$, $s>5/2$. Then there exists a $T>0$, such
that the problem (\ref{cauchy}) has a unique solution $u(t,x)$ in
the space $C([0,T);H^s)\bigcap C^1([0,T);H^{s-1})$.
\end{theorem}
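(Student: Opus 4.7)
The plan is to assemble the viscosity construction carried out in Theorems \ref{t3.1}--\ref{t3.5} and Lemmas \ref{t3.3}--\ref{t3.4} with a short energy argument for uniqueness. The existence half is essentially a limit-passage in the regularized equation \eqref{r3.1}, while uniqueness repeats the commutator-based estimates from the proof of Theorem \ref{t3.5} with the viscosity parameter set to zero.

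For existence, I would invoke Theorem \ref{t3.5} to conclude that on $[0,T)$ with $T<1/(cM)$ (where $M$ depends only on $\|u_0\|_{H^s}$) the family $\{u^{\varepsilon}\}_{\varepsilon}$ is Cauchy in $C([0,T);H^s)$ and $\{u^{\varepsilon}_t\}_{\varepsilon}$ is Cauchy in $C([0,T);H^{s-1})$. Setting $u=\lim_{\varepsilon\to 0}u^{\varepsilon}$ gives a candidate in $C([0,T);H^s)\cap C^1([0,T);H^{s-1})$. I would then pass to the limit $\varepsilon\to 0$ in \eqref{r3.1}: the dissipative term $\varepsilon u^{\varepsilon}_{xxt}$ vanishes distributionally since Lemma \ref{t3.4} gives uniform control of $\|u^{\varepsilon}_t\|_{H^{s-1}}$ (multiplied by the prefactor $\varepsilon$), whereas the nonlinear and nonlocal terms converge strongly using the Banach-algebra property of $H^s$ from Lemma \ref{l2.3} and the boundedness of $(\mu-\partial_x^2)^{-1}\partial_x$ between Sobolev scales. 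Thus the limit $u$ solves \eqref{cauchy} distributionally, and by its $H^s$ regularity classically.

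Uniqueness follows the template already set by Theorem \ref{t3.5}. Let $u,v\in C([0,T);H^s)\cap C^1([0,T);H^{s-1})$ be two solutions with $u(0)=v(0)$, and set $w=u-v$. Subtracting the two copies of \eqref{cauchy-1}, $w$ satisfies a transport-type equation whose right-hand side is a sum of quadratic and cubic multilinear terms in $u,v,w$ together with nonlocal zero-order pieces. Fixing $q\in(1/2,\min\{1,s-2\})$ and applying $\Lambda^q w\,\Lambda^q$ and $\Lambda^q w_x\,\Lambda^q$ to the equation and its $x$-derivative, then invoking the commutator estimates of Lemmas \ref{l2.3}--\ref{l2.4}, one bounds the right-hand side by $C\|w\|_{H^{q+1}}^2$, where $C$ depends on the (uniformly bounded on $[0,T]$) norms $\|u\|_{H^s}$ and $\|v\|_{H^s}$. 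Gronwall's inequality applied to
\begin{equation*}
\frac{d}{dt}\|w\|_{H^{q+1}}^2 \le C\,\|w\|_{H^{q+1}}^2
\end{equation*}
together with $w(0)=0$ forces $w\equiv 0$ on $[0,T)$.

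The main obstacle is essentially cosmetic at this stage, since the heavy lifting (commutator control of the cubic nonlocal nonlinearity together with the mixed $k_1,k_2$ structure) has already been carried out in Theorem \ref{t3.5}. What remains to check is that the strong $H^s$ limit commutes with the convolution operator $g\ast\cdot$ from \eqref{g}, which is immediate from continuity of $(\mu-\partial_x^2)^{-1}$ on Sobolev scales, and that the $H^{q+1}$-uniqueness estimate above is indeed enough at the $H^s$ level; the latter is automatic because both $u,v$ already lie in $C([0,T);H^s)$ and no lift in regularity is required for the uniqueness conclusion.
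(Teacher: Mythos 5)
Your proposal follows essentially the same route as the paper: existence is obtained by passing to the limit in the regularized problem \eqref{r3.1} via the Cauchy-sequence property established in Theorem \ref{t3.5}, and uniqueness is proved by the same $H^{q+1}$ energy estimate for $w=u-v$ with $q\in(1/2,\min\{1,s-2\})$, using the commutator lemmas and Gronwall's inequality. The argument is correct and matches the paper's proof in both structure and detail.
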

\begin{proof}
Suppose that $u$ and $v$ are two solutions of the problem
(\ref{cauchy}) corresponding to the same initial data $u_0$ such that
$u,v\in L^2([0,T);H^s)$. Then $w=u-v$ satisfies the following
Cauchy problem
\begin{equation}\label{d3.1}
\left\{
 \begin{aligned}
w_t
=-&2k_1\mu(w)uu_x
-\big(2k_1\mu(v)+k_2\big)\big(wu_x+vw_x\big)\\
+&\frac{k_1}{3}w_x\big(u_x^2+u_xv_x+v_x^2\big)
-\frac{k_1}{3}\mu\big(w_x(u_x^2+u_xv_x+v_x^2)\big)\\
-&\partial_x(\mu-\partial_x^2)^{-1} \bigg\{ k_1\Big[
2\mu(w)\mu(f)u+2\mu^2(v)w +\mu(w)u_x^2+\mu(v)w_xf_x\Big]
\\&\qquad\qquad\qquad\qquad
+k_2\Big[2\mu(w)u+2\mu(v)w
+\frac1{2}w_xf_x
\Big]\bigg\}, \\
w(0,x)&=0, \quad x \in \mathbb{R}.
\end{aligned} \right.
\end{equation}
where $f=u+v$. For any $1/2<q<\min\{1,s-2\}$. Applying the operator $\Lambda^q$ to both sides of the above
equation and then multiplying the resulting expression by
$\Lambda^qw$ to integrate with respect to $x$, one obtains the
following equation
\begin{align*}
\frac1{2}\frac{d}{dt}\|\Lambda^qw\|^2_{L^2}
&=-\int_{\mathbb{S}}\Lambda^qw\Lambda^q\bigg\{-2k_1\mu(w)uu_x
-\big(2k_1\mu(v)+k_2\big)\big(wu_x+vw_x\big)\\&
 \quad+\frac{k_1}{3}w_x\big(u_x^2+u_xv_x+v_x^2\big)
 -\frac{k_1}{3}\mu\big(w_x(u_x^2+u_xv_x+v_x^2)\big)\\&\quad
-\partial_x(\mu-\partial_x^2)^{-1} \bigg[ k_1\big(
2\mu(w)\mu(f)u+2\mu^2(v)w +\mu(w)u_x^2+\mu(v)w_xf_x\big)
\\&\qquad\qquad\qquad\qquad
+k_2\big(2\mu(w)u+2\mu(v)w +\frac1{2}w_xf_x \big)\bigg]\bigg\}\;dx.
\end{align*}
Similar to the proof of the estimate (\ref{e3.7}), there is a
constant $c$ such that
\begin{align*}
\frac{d}{dt}\|\Lambda^qw\|^2_{L^2}\le
c\|w\|^2_{H^{q+1}}\left(\|u\|^2_{H^{q+1}}+\|v\|^2_{H^{q+1}}+\|u\|_{H^{q+1}}+\|v\|_{H^{q+1}}\right).
\end{align*}
Next, differentiating the equation in \eqref{d3.1}, applying $\Lambda^qw_x\Lambda^q$ to both sides of the resulting equation and integrating over $\Bbb S$ with regard to $x$, one finds that
\begin{align*}
\frac1{2}\frac{d}{dt}\|\Lambda^qw_{x}\|^2_{L^2}
&=-\int_{\mathbb{S}}\Lambda^qw_x\Lambda^q\bigg\{-2k_1\mu(w)uu_x
-\big(2k_1\mu(v)+k_2\big)\big(wu_x+vw_x\big)\\&
 \quad+\frac{k_1}{3}w_x\big(u_x^2+u_xv_x+v_x^2\big)
 -\frac{k_1}{3}\mu\left(w_x\big(u_x^2+u_xv_x+v_x^2\big)\right)\\&\quad
-\partial_x(\mu-\partial_x^2)^{-1} \Big[
k_1\big(2\mu(w)\mu(f)u+2\mu^2(v)w +\mu(w)(u_x^2+\mu(v)w_xf_x\big)
\\&\qquad\qquad\qquad\qquad +k_2\big(2\mu(w)u+2\mu(v)w
+\frac1{2}w_xf_x \big)\Big]\bigg\}_x\;dx.
\end{align*}
Similar to the proof of the inequality \eqref{e3.8}, we deduce that there exists a constant $c$ such that
\begin{align*}
\frac{d}{dt}\|\Lambda^qw_{x}\|^2_{L^2}\le
c\|w\|^2_{H^{q+1}}\left(\|u\|^2_{H^{q+2}}+\|v\|^2_{H^{q+2}}+\|u\|_{H^{q+1}}+\|v\|_{H^{q+1}}\right).
\end{align*}
Combining the above two expressions, we deduce that
\begin{align*}
&\frac{d}{dt}\big(\|\Lambda^qw\|^2_{L^2}+\|\Lambda^qw_{x}\|^2_{L^2}\big)\\&\quad\le
c\|w\|^2_{H^{q+1}}\left(\|u\|^2_{H^{q+2}}+\|v\|^2_{H^{q+2}}+\|u\|_{H^{q+1}}+\|v\|_{H^{q+1}}\right).
\end{align*}
In view of $1/2<q<\min\{1,s-2\}$, then Gronwall's inequality and the
boundedness of $\|u\|_{H^{q+2}}$ and $\|v\|_{H^{q+2}}$ lead to
\begin{align*}
\|w\|_{H^{q+1}}\le\|w_0\|_{H^{q+1}}e^{\tilde{c}t}=0,
\end{align*}
for some constant $\tilde{c}$ and any $t\in(0,T)$. Hence,
$\|w\|_{H^{q+1}}=0$ and thus $w=0$ i.e. $u=v$.

Next, multiplying the equation in (\ref{d3.1}) by
$\Lambda^{s-1}w_t\Lambda^{s-1}$, and integrating the resulting
equation with respect to $x$, one obtains the inequality using the
integration by parts
\begin{align*}
  \|w_t\|_{H^{s-1}}\le c\|w\|_{H^{s}}\left(\|u\|^2_{H^{s}}+\|v\|^2_{H^{s}}\right).
\end{align*}
In view of $w=0$ and the boundedness of $\|u\|_{H^{s}}$ and
$\|v\|_{H^{s}}$, it follows that $\|w_t\|_{H^{s-1}}=0$ and thus
$w_t=0$ i.e. $u_t=v_t$. Hence the uniqueness of the solution to the
problem (\ref{cauchy}) is proved. The continuous dependency of
solutions on initial data can be verified by using a similar
technique used for the KdV equation by Bona and Smith \cite{bon}.
This completes the proof of Theorem \ref{t3-1}.
\end{proof}

\renewcommand{\theequation}{\thesection.\arabic{equation}}
\setcounter{equation}{0}

\section{Blow-up scenarios and a global conservative property}
In this section, attention  is now turned to the blow-up phenomena. We firstly present the following precise blow-up scenario.
\begin{theorem}\label{t4.1}
Let $ u_0\in H^{s + 2}$, $ s > 1/2$ be given and assume that $T$ is the
maximal existence time of the corresponding solution $u(t,x)$ to the
initial value problem (\ref{cauchy}) with the initial data $u_0$. Assume that
$T^*_{u_0}>0$ is the maximum time of existence. Then
\begin{align}\label{blow-criterion-2}
T^{\ast}_{u_0} < \infty \, \, \Rightarrow \, \,
\int_{0}^{T^{\ast}_{u_0}} \|m(\tau)\|_{L^{\infty}}^2\,d\tau =
\infty.
\end{align}
\end{theorem}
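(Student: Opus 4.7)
I would follow the standard $H^s$-energy strategy on the momentum variable $m$, combined with a contradiction argument: if $\|m\|_{L^\infty}$ is $L^2$-integrable in time up to $T^\ast_{u_0}$, then the $H^s$-norm of $m$ stays bounded on $[0,T^\ast_{u_0}]$, so the solution can be continued past $T^\ast_{u_0}$ via Theorem \ref{t3-1}, contradicting its maximality.

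First, I would rewrite the evolution equation in transport form for $m$. Substituting $u_{xx} = \mu(u)-m$ into the zero-order coefficients, equation \eqref{m-m-mu-ch} (with $\gamma=0$) reduces to
$$ m_t + V m_x + 2u_x(k_1 m + k_2)\,m = 0,$$
with the effective velocity $V := k_1\bigl(2\mu(u)u - u_x^2\bigr) + k_2 u$ and $V_x = 2k_1 u_x m + k_2 u_x$. The point of this rewriting is that $m_x$ appears only in the single transport term, which is essential since when only $s>1/2$ one cannot assume $m_x \in L^\infty$.

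Next, I would apply $\Lambda^s$, pair with $\Lambda^s m$ in $L^2(\mathbb{S})$, and integrate by parts. The top-order commutator is handled by the Kato--Ponce-type bound of Lemma \ref{l2.4}, and the zero-order nonlinearity $u_x(k_1 m+k_2)m$ by the Moser-type inequality of Lemma \ref{l2.3}. Thanks to the conservation of $\mu(u)$ and of $\|u\|_{H^1}$, together with the key elementary bound
$$ \|u_x\|_{L^\infty} \le \|g_x\|_{L^\infty}\,\|m\|_{L^1} \le C\|m\|_{L^\infty}$$
(which follows from $u_x = g_x\ast m$ with $g_x$ uniformly bounded on $\mathbb{S}$), every factor of $V$, $V_x$, and $u_x$ is controlled by $1$ and $\|m\|_{L^\infty}$. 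Putting these estimates together yields a differential inequality of the form
$$ \frac{d}{dt}\|m(t)\|_{H^s}^2 \le C\bigl(1 + \|m(t)\|_{L^\infty}^2\bigr)\|m(t)\|_{H^s}^2.$$
Gronwall's lemma then gives
$$ \|m(t)\|_{H^s}^2 \le \|m_0\|_{H^s}^2 \,\exp\!\Big(C\int_0^t (1 + \|m(\tau)\|_{L^\infty}^2)\,d\tau\Big),$$
from which the contradiction is immediate: if $T^\ast_{u_0} < \infty$ and $\int_0^{T^\ast_{u_0}}\|m\|_{L^\infty}^2\,d\tau < \infty$, then $\|m(t)\|_{H^s}$, hence $\|u(t)\|_{H^{s+2}}$, stays bounded on $[0, T^\ast_{u_0})$, allowing the solution to be continued beyond $T^\ast_{u_0}$.

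The principal technical obstacle is calibrating the nonlinear estimates so that only $\|m\|_{L^\infty}^2$, and no higher power nor any $\|m_x\|_{L^\infty}$, appears in the Gronwall coefficient. The quadratic-in-$m$ source term $u_x m^2$ and the cubic-in-$u$ piece $u_x^2 m$ hidden inside $V$ are the delicate ones; they are handled by trading each derivative $u_x$ for a factor of $\|m\|_{L^\infty}$ via the Green's-function bound, while placing only $L^\infty$-type norms on the undifferentiated factors, so that the Moser and commutator estimates close with exactly the quadratic dependence on $\|m\|_{L^\infty}$ that the statement predicts.
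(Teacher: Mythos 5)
Your overall strategy is the same as the paper's: reduce \eqref{m-m-mu-ch} to the transport form $m_t+Vm_x=-2u_x(k_1m+k_2)m$ with $V=k_1(2\mu_0u-u_x^2)+k_2u$, trade every factor of $u_x$ for $\|m\|_{L^\infty}$ via the Green's function, obtain $\tfrac{d}{dt}\|m\|_{H^s}\le C\|m\|_{L^\infty}(1+\|m\|_{L^\infty})\|m\|_{H^s}$, and conclude by Gronwall plus a continuation/contradiction argument. That is exactly the shape of the paper's estimate \eqref{criterion-1-4}--\eqref{criterion-1-5}.

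However, the step where you handle the top-order transport term has a genuine gap for the low end of the stated range of $s$. Pairing $\Lambda^s(Vm_x)$ with $\Lambda^sm$ and invoking Lemma \ref{l2.4} for the commutator $[\Lambda^s,V]m_x$ produces, besides the harmless term $\|V_x\|_{L^\infty}\|\Lambda^{s-1}m_x\|_{L^2}$, the term $\|\Lambda^sV\|_{L^2}\,\|m_x\|_{L^{\infty}}$. For $1/2<s\le 3/2$ the quantity $\|m_x\|_{L^\infty}$ is controlled neither by $\|m\|_{H^s}$ nor by $\|m\|_{L^\infty}$, so the estimate does not close with a Gronwall coefficient depending only on $\|m\|_{L^\infty}$ --- and you cannot simply restrict to large $s$, because the whole point of proving the criterion at the threshold regularity $s>1/2$ is that (as Remark \ref{rmk-blow-criterion-2} exploits) the lifespan must be shown independent of the regularity index. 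You correctly observe that one cannot assume $m_x\in L^\infty$, but the tool you then name requires exactly that. The paper circumvents this by using the transport estimate of Lemma \ref{lemtrans}, whose hypotheses involve only $V_x\in L^1_tL^\infty_x$ but which is valid only for $\sigma\in[0,1)$; it then runs an induction on $s$ in unit steps, differentiating the equation $k$ times so that the transported quantity $\partial_x^km$ is always estimated in $H^{s-k}$ with $s-k\in[0,1)$, the lower-order terms being absorbed by the Moser estimates of Lemma \ref{l2.3} and the previously established bounds. To repair your argument you would either need to adopt this induction, or replace Lemma \ref{l2.4} by a commutator estimate adapted to transport at low regularity (one in which only $\|V_x\|_{L^\infty}\|m\|_{H^s}$ and $\|V\|_{H^{s+1}}\|m\|_{L^\infty}$ appear); as written, the proposal does not prove the theorem for $1/2<s\le 3/2$.
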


\begin{remark}\label{rmk-blow-criterion-2}
The blow-up criterion in \eqref{blow-criterion-2} implies that the
lifespan $T^{\ast}_{u_0}$ does not depend on the regularity index
$s$ of the initial data $u_0$. Indeed, let $u_0$ be in $H^s$ for
some $s>5/2$ and consider some $s' \in (5/2, \, s)$.
Denote by $u_{s}$ (resp., $u_{s'}$ ) the corresponding maximal $H^s$
(resp., $H^{s'}$ ) solution given by the above theorem. Denote by
$T^{\ast}_{s}$ (resp., $T^{\ast}_{s'}$) the lifespan of $u_{s}$
(resp., $u_{s'}$). Since $H^s \hookrightarrow H^{s'}$, uniqueness
ensures that $T^{\ast}_{s} \leq T^{\ast}_{s'}$ and that $u_{s}\equiv
u_{s'}$ on $[0, T^{\ast}_{s} )$. Now, if $T^{\ast}_{s} <
T^{\ast}_{s'}$, then we must have $u_{s'}$ in $C([0, T^{\ast}_{s}];
H^{s'})$. Hence, $u_{s'} \in L^2([0, T^{\ast}_{s}]; L^{\infty})$,
which contradicts the above blow-up criterion
\eqref{blow-criterion-2}. Therefore, $T^{\ast}_{s} = T^{\ast}_{s'}$.
\end{remark}

\begin{proof}
We shall prove
Theorem \ref{t4.1} by an inductive argument with
respect to the index $s$.  This will be carried out by three steps.

\vskip 0.2cm

\no {\bf Step 1.} For $s \in (1/2, 1)$, applying Lemma
\ref{lemtrans} to the equation in \eqref{cauchy}, i.e.,
\begin{eqnarray}\label{emmch}
m_t+\big(k_1(2\mu_0u-u_x^2)+k_2u\big)m_x=-2k_1u_xm^2-2k_2mu_x,
\end{eqnarray}
we arrive at
\begin{equation}\label{criterion-1-0}
\begin{split}
\|m(t)\|_{H^{s}}
 \leq & \
\|m_0\|_{H^{s}}+C\int_{0}^{t}\|\partial_x\big(k_1(2\mu_0u-u_x^2)+k_2u\big)(\tau)\|_{L^{\infty}}
\|m(\tau)\|_{H^{s}}\,d\tau\\
&+ 2C\int_{0}^{t}\|(u_xm^2+mu_x)(\tau)\|_{H^{s}}\,d\tau
\end{split}
\end{equation}
 for all $0 <t<T^{\ast}_{u_0}$. Owing to the Moser-type estimate in Lemma \ref{l2.3}, one has
\begin{equation}\label{criterion-1-1}
\|u_xm^2\|_{H^{s}} \leq
C(\|u_x\|_{H^{s}}\|m\|_{L^{\infty}}^2+\|u_x\|_{L^{\infty}}\|m\|_{L^{\infty}}\|m\|_{H^{s}}).
\end{equation}
An application of the Young inequality then implies
\begin{equation*}
\|u_x\|_{L^{\infty}}\leq \|g_x\|_{L^1}\cdot \|m\|_{L^{\infty}}\leq
\|m\|_{L^{\infty}}.
\end{equation*}
This esitmate together with the fact $\|u_x\|_{H^{s}} \leq C\|m\|_{H^{s}}$
and \eqref{criterion-1-1}, gives rise to
\begin{equation}\label{criterion-1-2}
\begin{split}
\|u_xm^2\|_{H^{s}}\leq C\|m\|_{H^{s}}\|m\|_{L^{\infty}}^2
\end{split}
\end{equation}
and
\begin{equation}\label{criterion-1-3}
\begin{split}
\|\partial_x\big(k_1(2\mu_0u-u_x^2)+k_2u\big)\|_{L^{\infty}} \leq &\; C\left(1+\|m\|_{L^{\infty}}\right)\|u_x\|_{L^{\infty}}\\
\leq &\;C\|m\|_{L^{\infty}}\left(1+\|m\|_{L^{\infty}}\right).
\end{split}
\end{equation}
Plugging \eqref{criterion-1-3} and \eqref{criterion-1-2} into
\eqref{criterion-1-0} leads to
\begin{equation}\label{criterion-1-4}
\begin{split}
\|m(t)\|_{H^{s}}
 \leq \|m_0\|_{H^{s}}+C\int_{0}^{t}
\|m(\tau)\|_{H^s} \|m(\tau)\|_{L^{\infty}}(1+ \|m(\tau)\|_{L^{\infty}})\,d\tau,
\end{split}
\end{equation}
which, by Gronwall's inequality, yields
\begin{equation}
\begin{split}\label{criterion-1-5}
\|m(t)\|_{H^{s}} \leq \|m_0\|_{H^{s}}
e^{C\int_{0}^{t}\|m(\tau)\|_{L^{\infty}}(1+ \|m(\tau)\|_{L^{\infty}})\,d\tau}.
\end{split}
\end{equation}
Therefore, if the maximal existence time $T^{\ast}_{u_0} < \infty $
satisfies
$$ \int_{0}^{T^{\ast}_{u_0}}
\|m(\tau)\|_{L^{\infty}}^2\,d\tau < \infty,$$
then
$$ \int_{0}^{T^{\ast}_{u_0}}
\left(\|m(\tau)\|_{L^{\infty}}+\|m(\tau)\|_{L^{\infty}}^2\right)\,d\tau \le \int_{0}^{T^{\ast}_{u_0}}
\left(\dfrac{1}{2}+\dfrac{3}{2}\|m(\tau)\|_{L^{\infty}}^2\right)\,d\tau <\infty.$$
Thus, the inequality
\eqref{criterion-1-5} implies that
\begin{equation}\label{criterion-1-6}
\begin{split}
\limsup_{t\rightarrow T^{\ast}_{u_0}}\|m(t)\|_{H^{s}} < \infty,
\end{split}
\end{equation}
which contradicts the assumption on the maximal existence time
$T^{\star}_{\mathbf{u}_0} < \infty .$ This completes the proof of
Theorem \ref{t4.1} for $s \in (1/2, 1)$.

\vskip 0.2cm

\no {\bf Step 2.} For $s \in [1, 2)$, by differentiating
the equation \eqref{emmch} with respect to $x$, we have
\begin{align}\label{equation-0-3}
\partial_t& (m_x)+\big(k_1(2\mu_0u-u_x^2)+k_2u\big)\partial_x(m_x)\nonumber\\&=
-3k_1u_x (m^2)_x-2k_1u_{xx}m^2-2k_2u_{xx}m-3k_2m_xu_x.
\end{align}
Applying Lemma \ref{lemtrans} to \eqref{equation-0-3} yields
\begin{equation}\label{criterion-2-1}
\begin{split}
\|m_x(t,\cdot)\|_{H^{s-1}} \leq &\;
\|m_{0,x}\|_{H^{s-1}}+C\int_{0}^{t}\|\partial_x\big(k_1(2\mu_0u-u_x^2)+k_2u\big)\|_{L^{\infty}}\|\pa_x
m\|_{H^{s-1}}d\tau\\
&+C \int_{0}^{t}\|3k_1u_x (m^2)_x+2k_1u_{xx}m^2+2k_2u_{xx}m+3k_2m_xu_x\|_{H^{s-1}}\,d\tau.
\end{split}
\end{equation}
By the Moser-type estimates in Lemma \ref{l2.3}, one has
\begin{equation}\label{criterion-2-2}
\|u_x (m^2)_x\|_{H^{s-1}} \leq C\|m\|_{L^{\infty}}^2\|m\|_{H^{s}}.
\end{equation}
and
\begin{equation}\label{criterion-2-4}
\|u_{xx}m^2\|_{H^{s-1}} \leq C\|m\|_{L^{\infty}}^2\| m\|_{H^{s-1}}.
\end{equation}
Using \eqref{criterion-1-3}, \eqref{criterion-2-2}, and \eqref{criterion-2-4} in
\eqref{criterion-2-1}, and combining with \eqref{criterion-1-4}, we
conclude that, for $1 \leq s <2$, \eqref{criterion-1-4} holds.

Repeating the same argument as in Step 1, we see that Theorem
\ref{t4.1} holds for $1 \leq s <2$.

\vskip 0.2cm

\no {\bf Step 3.} Suppose $2 \leq k \in \mathbb{N}$.  By induction, we
assume that \eqref{blow-criterion-2} holds when  $k-1 \leq s <k$,
and prove that it holds for $k \leq s <k+1$. To this end, we
differentiate \eqref{emmch} $k$ times with respect to $x$, producing
\begin{equation*}
\begin{aligned}
\pa_t\pa_x^{k}m&+\big(k_1(2\mu_0u-u_x^2)+k_2u\big)\pa_x(\pa_x^{k}m)\\
=&-\sum_{\ell=0}^{k-1}C_{k}^{\ell}
\partial_{x}^{k-\ell}\big(k_1(2\mu_0u-u_x^2)+k_2u\big) \partial_x^{\ell+1}m\\&
-2k_1\partial_x^{k}(u_xm^2)-2k_2\partial_x^{k}(mu_x).
\end{aligned}
\end{equation*}
Applying Lemma \ref{lemtrans} to the above equation again yields
that
\begin{equation}\label{criterion-3-1}
\begin{split}
\|\pa_x^{k}m(t)\|_{H^{s-k}}& \leq \|\pa_x^{k}m_0\|_{H^{s-k}}+C
\int_{0}^{t}\|\pa_x^{k}m(\tau)\|_{H^{s-k}}\|(2k_1mu_x+k_2u_x)(\tau)\|_{L^{\infty}}\,d\tau\\
&\ +C\int_{0}^{t}\left\|\left(\sum_{\ell=0}^{k-1}C_{k}^{\ell}
\partial_{x}^{k-\ell}\big(k_1(2\mu_0u-u_x^2)+k_2u\big) \partial_x^{\ell+1}m
 \right)(\tau)\right\|_{H^{s-k}}\,d\tau\\&\,+C\int_{0}^{t}\left\|\left(2k_1\partial_x^{k}(u_xm^2)
 +2k_2\partial_x^{k}(u_xm)\right)(\tau)\right\|_{H^{s-k}}\,d\tau.
\end{split}
\end{equation}
Using the Moser-type estimate in Lemma \ref{l2.3} and the
Sobolev embedding inequality, we have
\begin{equation*}
\begin{split}
&\left\|\sum_{\ell=0}^{k-1}C_{k}^{\ell}
\partial_{x}^{k-\ell} \big(k_1(2\mu_0u-u_x^2)+k_2u\big) \partial_x^{\ell+1}m\right \|_{H^{s-k}}\\
&\quad\leq C\sum_{\ell=0}^{k-1}C_{k}^{\ell}\big[\|k_1(2\mu_0u-u_x^2)+k_2u\|_{H^{s-\ell+1}}
\|\partial_x^{\ell}m\|_{L^{\infty}}\\
&\;\qquad\qquad \qquad +\|\partial_x^{k-\ell}\big(k_1(2\mu_0u-u_x^2)+k_2u\big)\|_{L^{\infty}}\|m\|_{H^{s-k+\ell+1}}\big]\\
&\quad\leq  C \|m\|_{H^{s}}\|m\|_{H^{k-\frac{1}{2}+\varepsilon_0}}\left(1+\|m\|_{H^{k-\frac{1}{2}+\varepsilon_0}}\right),
\end{split}
\end{equation*}
where the genius constant $\varepsilon_0 \in (0, \,\frac{1}{4})$ so
that $ H^{\frac{1}{2}+\varepsilon_0}  \hookrightarrow L^{\infty}$ holds.
Substituting these estimates into
\eqref{criterion-3-1}, we obtain
\begin{equation}\label{criterion-3-4}
\begin{split}
\|m(t)\|_{H^{s}} \leq \|m_{0}\|_{H^{s}}&
+C\int_{0}^{t}\left(1+\|m(\tau)\|_{H^{k-\frac{1}{2}+\varepsilon_0}}\right)
\|m(\tau)\|_{H^{k-\frac{1}{2}+\varepsilon_0}}\|m(\tau)\|_{H^{s}}\,d\tau,
\end{split}
\end{equation}
where we used the Sobolev embedding theorem
$H^{k-\frac{1}{2}+\varepsilon_0}\hookrightarrow L^{\infty}$ with $k
\geq 2$. Applying Gronwall's inequality then gives
\begin{equation}\label{criterion-3-5}
\begin{split}
\|m(t)\|_{H^{s}}
 \leq \|m_0\|_{H^{s}}
 \exp\{C\int_{0}^{t}\|m(\tau)\|_{H^{k-\frac{1}{2}+\varepsilon_0}}
 \left(1+\|m(\tau)\|_{H^{k-\frac{1}{2}+\varepsilon_0}}\right)\,d\tau\}.
\end{split}
\end{equation}
In consequence,  if the maximal existence time
$T^{\star}_{\mathbf{u}_0} < \infty $ satisfies
\begin{equation*}
\int_{0}^{T^{\star}_{\mathbf{u}_0}}
\|m(\tau)\|_{L^{\infty}}^2\,d\tau < \infty,
\end{equation*} thanks to the uniqueness of
solution in Theorem \ref{t3-1}, we then find that
$\|m(t)\|_{H^{k-\frac{1}{2}+\varepsilon_0}}$ is uniformly bounded in
$t \in (0, T^{\ast}_{\mathbf{u}_0})$ by the induction assumption,
which along with \eqref{criterion-3-5} implies
\begin{equation*}
\begin{split}
\limsup_{t\rightarrow T^{\ast}_{\mathbf{u}_0}}\|m(t)\|_{H^{s}} <
\infty,
\end{split}
\end{equation*}
which leads to a contradiction.  The
result of Theorem \ref{t4.1} then follows from Step 1 to Step 3.
\end{proof}
\begin{remark}
In fact, we can improve the result of Theorem \ref{t4.1} as follows:
\begin{equation}\label{im4.1}
T^{\ast}_{u_0} < \infty \, \, \Rightarrow \, \,
\int_{0}^{T^{\ast}_{u_0}} \|\left(k_1mu_x+k_2u_x\right)(\tau)\|_{L^{\infty}}\,d\tau =
\infty.
\end{equation}
Applying the maximum principle to the transport equation \eqref{emmch}, we immediately get
\begin{align*}
\|m(t)\|_{L^{\infty}} &\le\|m_0\|_{L^{\infty}} + C\int_0^{t}\|\left(k_1mu_x+k_2u_x\right)(\tau)\|_{L^{\infty}}\|m(\tau)\|_{L^{\infty}}\,d\tau.
\end{align*}
 Then, Gronwall's inequlity applied to the above inequality yields
\begin{equation*}
\|m(t)\|_{L^{\infty}} \leq \|m_0\|_{L^{\infty}} \exp \left( C\int_0^{t}\|\left(k_1mu_x+k_2u_x\right)(\tau)\|_{L^{\infty}}\,d\tau\right),
\end{equation*}
which along with the result of Theorem \ref{t4.1} gives rise to \eqref{im4.1}.
\end{remark}

In order to demonstrate a conservative property, let us consider the
trajectory equation
\begin{equation}\label{flow-1}
\left\{
 \begin{array}{ll}
\begin{split}
&\dfrac{dq}{dt}=\Big(k_1(2\mu_0u-u_x^2)+k_2u\Big)(t,q(t,x))\\
&q(0,x)=x.
 \end{split}
\end{array} \right.
\end{equation}
\begin{lemma}\label{lem-flow-1}
Let $u_{0}\in H^{s}, \; s > 5/2$, and let $T>0$ be the
maximal existence time of the corresponding strong solution $u$ to
\eqref{cauchy}. Then \eqref{flow-1} has a unique solution $q \in
C^{1}([0,T)\times {\Bbb S})$ such that the map $q(t,\cdot)$ is an
increasing diffeomorphism over ${\Bbb S}$ with
\begin{equation}\label{diff-flow-1}
q_{x}(t,x)=\exp \left ( \int_{0}^{t}(2k_1mu_{x}+k_2u_x)(s,q(s,x))ds \right ) >
0,\,\,\hbox{for all} \quad (t,x)\in [0,T)\times {\Bbb S}.
\end{equation}
Furthermore,
\begin{equation}\label{diff-flow-2}
m(t,q(t,x))=m_{0}(x)\exp\left(-2\int_0^t(k_1mu_x+k_2u_x)(s,q(s,x))\,ds\right)
\end{equation}
for all  $(t,x)\in[0,T)\times{\Bbb S}$.
\end{lemma}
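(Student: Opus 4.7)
The plan is to apply classical Cauchy-Lipschitz theory to the ODE \eqref{flow-1}, and then to read off the two identities \eqref{diff-flow-1}--\eqref{diff-flow-2} by differentiating $q$ and $m\circ q$ along trajectories. The only real issue, and what I would pay most attention to, is verifying that the vector field and the exponent-integrands have enough regularity to justify each step uniformly in $t$ on $[0,T')$ for every $T'<T$.

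For existence and uniqueness, Theorem \ref{t3-1} provides $u\in C([0,T);H^{s})\cap C^{1}([0,T);H^{s-1})$ with $s>5/2$. The Sobolev embedding $H^{s-1}({\Bbb S})\hookrightarrow C^{1}({\Bbb S})$, which requires $s-1>3/2$, then ensures that the vector field
\begin{equation*}
V(t,x):=k_{1}\bigl(2\mu_{0}u-u_{x}^{2}\bigr)(t,x)+k_{2}u(t,x)
\end{equation*}
belongs to $C([0,T);C^{2}({\Bbb S}))$; in particular it is continuous in $(t,x)$ and Lipschitz in $x$, uniformly on $[0,T']\times{\Bbb S}$. Classical ODE theory yields a unique $q\in C^{1}([0,T)\times{\Bbb S})$, and the ${\Bbb S}$-periodicity of $V$ in $x$ guarantees that $q(t,\cdot)$ descends to a map of the circle.

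To derive \eqref{diff-flow-1}, I would differentiate \eqref{flow-1} with respect to $x$, using the identity
\begin{equation*}
\partial_{x}V=2k_{1}u_{x}(\mu_{0}-u_{xx})+k_{2}u_{x}=(2k_{1}m+k_{2})u_{x}.
\end{equation*}
This produces the linear equation $\partial_{t}q_{x}=(2k_{1}mu_{x}+k_{2}u_{x})(t,q(t,x))\,q_{x}$ with $q_{x}(0,x)=1$; its explicit solution is \eqref{diff-flow-1}, and positivity of the exponential gives $q_{x}>0$, whence $q(t,\cdot)$ is a $C^{1}$ diffeomorphism of ${\Bbb S}$.

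For \eqref{diff-flow-2}, I would rewrite \eqref{m-m-mu-ch} (with $\gamma=0$) in the transport form \eqref{emmch}, rearranged as
\begin{equation*}
m_{t}+V\,m_{x}=-2(k_{1}mu_{x}+k_{2}u_{x})\,m,
\end{equation*}
and compose with $q$. Since $q_{t}=V\circ q$, the chain rule converts this into the linear ODE $\frac{d}{dt}\bigl[m(t,q(t,x))\bigr]=-2(k_{1}mu_{x}+k_{2}u_{x})(t,q(t,x))\,m(t,q(t,x))$, which integrates to \eqref{diff-flow-2}. The regularity needed to justify both exponential formulas is $L^{\infty}$-boundedness of $mu_{x}$ and $u_{x}$ on $[0,T']$, which follows from the embeddings $H^{s-2}({\Bbb S})\hookrightarrow L^{\infty}$ (valid since $s>5/2$) and $H^{s-1}({\Bbb S})\hookrightarrow L^{\infty}$, ensuring in particular that the integrands in \eqref{diff-flow-1}--\eqref{diff-flow-2} are continuous in $t$ and hence absolutely integrable on each $[0,T']$.
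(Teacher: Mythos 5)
Your proposal is correct and follows essentially the same route as the paper: Cauchy--Lipschitz theory for existence and uniqueness of $q$ (via the embedding $H^{s-1}(\mathbb S)\hookrightarrow C^1(\mathbb S)$), differentiation of \eqref{flow-1} in $x$ together with the identity $\partial_x\bigl(k_1(2\mu_0u-u_x^2)+k_2u\bigr)=(2k_1m+k_2)u_x$ to obtain \eqref{diff-flow-1}, and composition of the transport form \eqref{emmch} with the flow to obtain \eqref{diff-flow-2}. The only slip is the claim that the vector field lies in $C([0,T);C^2(\mathbb S))$ --- with $s>5/2$ one only gets $u_x^2\in C^1(\mathbb S)$, hence the field is $C^1$ in $x$ --- but since the argument uses only Lipschitz continuity in $x$, this does not affect the proof.
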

\begin{proof}
Since $u \in C^1 \left([0, T), H^{s-1}({\Bbb S})\right)$ and
$H^{s}\hookrightarrow C^1,$ both $u(t, x)$ and $u_x(t, x)$ are
bounded, Lipschitz in the space variable $x$, and of class $C^1$ in
time. Therefore, by well-known classical results in the theory of
ordinary differential equations, the initial value problem
\eqref{flow-1} has a unique solution $q(t, x) \in C^1 \left([0, T)
\times {\Bbb S}\right).$

Differentiating \eqref{flow-1} with respect to $x$ yields
\begin{equation*}
\begin{cases}\frac{d}{dt}q_{x}=(2k_1mu_x+k_2u_x)(t,q)q_x,
\\[.2cm] q_x(0,x) = 1,
\end{cases}
  \qquad x\in {\Bbb S},\quad t\in[0,T).
\end{equation*}
The solution to the above initial-value problem is given by
\begin{equation*}
q_{x}(t,x)=\exp \left ( \int_{0}^{t}(2k_1mu_x+k_2u_x)(s,q(s,x))ds \right
),\, \quad (t,x)\in [0,T)\times {\Bbb S}.
\end{equation*}
For every $T' < T,$ it follows from the Sobolev embedding theorem
that
\begin{equation*}
\sup_{(s,x)\in [0,T')\times {\Bbb S}}|(2k_1mu_x+k_2u_x)(s, x)|< \infty.
\end{equation*} We infer from the expression of $q_x$
that there exists a constant $K > 0$ such that $q_x(t, x) \geq
e^{-Kt}, \ (t,x)\in [0,T)\times {\Bbb S},$ which implies that the
map $q(t,\cdot)$ is an increasing diffeomorphism of ${\Bbb S}$
before blow-up with
\begin{equation*}
q_{x}(t,x)=\exp \left ( \int_{0}^{t}(2k_1mu_x+k_2u_x)(s,q(s,x))ds \right ) >
0,\, \quad \hbox{for all} \quad  (t,x)\in [0,T)\times {\Bbb S}.
\end{equation*}
On the other hand, from \eqref{emmch} we
have
\begin{align*}
\dfrac{d}{dt}m(t,q(t,x))=&\ m_t(t,q)+\Big(k_1(2\mu_0u-u_x^2)+k_2u\Big)(t,q(t,x))m_x(t,q(t,x))\\
=&\;-2\Big(k_1mu_x+k_2u_x\Big)(t,q(t,x))m(t,q(t,x)).
\end{align*}
Therefore, solving the equation with regard to $m(t,q(t,x))$ leads to
\begin{align*}
m(t,q(t,x))=m_{0}(x)\exp\left(-2\int_0^t(k_1mu_x+k_2u_x)(s,q(s,x))\,ds\right).
\end{align*}
This completes the proof of Lemma \ref{lem-flow-1}.
\end{proof}
\begin{remark}\label{rmk-flow-1}
Lemma \ref{lem-flow-1} shows that, if $m_0 =(\mu-\partial_x^2) u_0$
does not change sign, then $m(t,x)$  will not change sign for any $
t \in [0, T)$.
\end{remark}

\begin{remark}\label{rmk-flow-2}
Since $q(t,\cdot)\colon {\Bbb S}\rightarrow{\Bbb S}$ is a
diffeomorphism of the line for every $t\in[0,T)$, the
$L^{\infty}$-norm of any function $v(t,\cdot)\in L^{\infty}$ is
preserved under the family of diffeomorphisms $q(t,\cdot)$, that is,
\begin{equation*}
\|v(t,\cdot)\|_{L^{\infty}}=\|v(t,q(t,\cdot))\|_{L^{\infty}},\quad
t\in[0,T).
\end{equation*}
\end{remark}

The following blow-up criterion implies that wave-breaking depends
only on the infimum of $k_1m\,u_x$ or $k_2u_x$.

\begin{theorem}\label{cor-blow-criterion-2}
Let $u_0\in H^s$, $ s > 5/2 $ be as in Theorem \ref{t4.1}.  Then the corresponding solution $u$ to \eqref{cauchy}
blows up in finite time $ T^{\ast}_{u_0}
> 0 $ if and only if
\begin{equation}\label{blow-1}
\underset{t\uparrow T^{\ast}_{u_0}}{\liminf}\left (\underset{x\in
{\Bbb S}}{\inf} (k_1mu_x(t,x)+k_2u_x(t,x)) \right )=-\infty.
\end{equation}
\end{theorem}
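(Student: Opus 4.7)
\textbf{Proof strategy for Theorem \ref{cor-blow-criterion-2}.} The plan is to combine the a priori blow-up criterion of Theorem \ref{t4.1} with the Lagrangian identity \eqref{diff-flow-2} from Lemma \ref{lem-flow-1}. Both implications are handled by contraposition.

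\emph{Sufficiency ($\Leftarrow$).} Assume, toward a contradiction, that $T^{\ast}_{u_0}<\infty$ and yet
\[
\liminf_{t\uparrow T^{\ast}_{u_0}}\inf_{x\in\mathbb{S}}\bigl(k_1 m u_x+k_2 u_x\bigr)(t,x)=:-K>-\infty.
\]
Then for $t\in[0,T^{\ast}_{u_0})$ we have $-2(k_1 m u_x+k_2 u_x)(s,q(s,x))\le 2K$ uniformly in $s,x$. Feeding this into \eqref{diff-flow-2} and taking absolute values,
\[
|m(t,q(t,x))|\le |m_0(x)|\exp\!\Bigl(-2\int_0^t (k_1 mu_x+k_2 u_x)(s,q(s,x))\,ds\Bigr)\le \|m_0\|_{L^\infty}\,e^{2Kt}.
\]
Since by Lemma \ref{lem-flow-1} the map $q(t,\cdot)$ is a diffeomorphism of $\mathbb{S}$, Remark \ref{rmk-flow-2} gives $\|m(t,\cdot)\|_{L^\infty}\le \|m_0\|_{L^\infty}e^{2Kt}$. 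Consequently
\[
\int_0^{T^{\ast}_{u_0}}\|m(\tau)\|_{L^\infty}^2\,d\tau\le \|m_0\|_{L^\infty}^2\,\frac{e^{4K T^{\ast}_{u_0}}-1}{4K}<\infty,
\]
which contradicts the blow-up criterion \eqref{blow-criterion-2} in Theorem \ref{t4.1}. Hence the infimum must blow down to $-\infty$.

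\emph{Necessity ($\Rightarrow$).} Conversely, assume the liminf in \eqref{blow-1} is finite; we will show that $u$ can be continued past $T^{\ast}_{u_0}$, so the solution does not blow up. By the same estimate as above, $\|m(t,\cdot)\|_{L^\infty}$ remains uniformly bounded on $[0,T^{\ast}_{u_0})$. Since $s>5/2$ implies $m\in H^{s-2}\hookrightarrow L^\infty$, invoking Theorem \ref{t4.1} we conclude $T^{\ast}_{u_0}=\infty$, contradicting finite-time blow-up. Equivalently, if blow-up occurs at finite $T^{\ast}_{u_0}$, the liminf in \eqref{blow-1} is forced to be $-\infty$.

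\emph{Main obstacle.} The only delicate point is passing from the pointwise Lagrangian bound along characteristics to a uniform Eulerian $L^\infty$ bound on $m$; this is handled cleanly by the diffeomorphism property of $q(t,\cdot)$ recorded in \eqref{diff-flow-1} and Remark \ref{rmk-flow-2}. Once that step is in place, the theorem reduces to the already-established blow-up criterion of Theorem \ref{t4.1}, and no further new estimates are required. Note that the single quantity $k_1 m u_x+k_2 u_x$ controls wave breaking because it is precisely the factor appearing in the transport identity \eqref{diff-flow-2}; the cubic and quadratic nonlinearities enter only through this combined slope.
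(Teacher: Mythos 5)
Your first paragraph reproduces the paper's argument for the implication ``finite\--time blow\--up $\Rightarrow$ \eqref{blow-1}'': a uniform lower bound on $\inf_{x\in\mathbb{S}}(k_1mu_x+k_2u_x)$ turns the Lagrangian identity \eqref{diff-flow-2} into a uniform bound on $\|m(t)\|_{L^{\infty}}$ via the diffeomorphism property of $q(t,\cdot)$, which contradicts the criterion \eqref{blow-criterion-2} of Theorem \ref{t4.1}. This is exactly what the paper does, and it is correct (modulo the harmless point that a finite $\liminf$ only gives a lower bound for $t$ near $T^{\ast}_{u_0}$, which must be combined with continuity on compact subintervals to get a bound on all of $[0,T^{\ast}_{u_0})$; the paper glosses over this as well).

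The gap is that the converse direction of the ``if and only if'' is never proved. Your second paragraph, which opens with ``assume the liminf is finite; then the solution does not blow up,'' is the contrapositive of the implication you already established in the first paragraph, not of the remaining one; your own closing sentence (``Equivalently, if blow\--up occurs \dots the liminf is forced to be $-\infty$'') confirms that both paragraphs prove the same implication. What is missing is the statement that \eqref{blow-1} \emph{forces} blow\--up in finite time. The paper handles this in one line: since $s>5/2$, the embedding $H^{s}\hookrightarrow C^{1}$ gives $\|k_1mu_x+k_2u_x\|_{L^{\infty}}\le C\big(\|u\|^{2}_{H^{s}}+\|u\|_{H^{s}}\big)$, so if $u$ remained bounded in $H^{s}$ up to $T^{\ast}_{u_0}$ the local well\--posedness of Theorem \ref{t3-1} would allow continuation of the solution and the quantity in \eqref{blow-1} would stay bounded below; hence \eqref{blow-1} is incompatible with $\limsup_{t\uparrow T^{\ast}_{u_0}}\|u(t)\|_{H^{s}}<\infty$ and the solution must blow up. You need to add this (short) argument; as written, only half of the equivalence is established.
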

\begin{proof}
In view of the equation in \eqref{diff-flow-2},  if $\underset{x\in{\Bbb S}}{\inf} \{k_1mu_x+k_2u_x\}\ge-C_1$  for $0 \le t \le T^*_{u_0}$,
then it implies that
\begin{align*}
\|m(t)\|^2_{L^{\infty}}&=\|m(t,q(t,x))\|^2_{L^{\infty}}=\|m_0(x)\|^2_{L^{\infty}}
\exp\left(-4\int_0^t(k_1mu_x+k_2u_x)(s,q(s,x))\,ds\right)\\&\le e^{4C_1}\|m_0(x)\|^2_{L^{\infty}}.
\end{align*}
Thanks to Theorem \ref{t4.1}, it ensures that the solution $m(t,x)$ does not blow up in finite time.

On the other hand, if
$$
\underset{t\uparrow T^{\ast}_{u_0}}{\liminf}\left (\underset{x\in
{\Bbb S}}{\inf} (k_1mu_x(t,x)+k_2u_x(t,x)) \right )=-\infty,
$$
 by Theorem \ref{t3-1} for the existence of local strong
solutions and the Sobolev embedding theorem, we infer that the
solution will blow up in finite time. The proof of the theorem is then complete.
\end{proof}
\begin{cor}\label{thm-blow-criterion-2}
Let $u_0\in H^s$ be as in Theorem \ref{t4.1} with $ s>
5/2$. Then the corresponding solution $u$ to \eqref{cauchy}
blows up in finite time $ T^{\ast}_{u_0}
> 0 $ if and only if
\begin{equation}\label{blow-1}
\underset{t\uparrow T^{\ast}_{u_0}}{\liminf} \left ( \min \{ \underset{x\in
{\Bbb S}}{\inf} (k_1mu_x(t,x)), \underset{x\in
{\Bbb S}}{\inf} (k_2u_x(t,x))  \} \right ) = -\infty.
\end{equation}
\end{cor}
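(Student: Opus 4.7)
The plan is to reduce the corollary to the preceding blow-up criterion, Theorem~\ref{cor-blow-criterion-2}, which asserts that finite-time blow-up at $T^*_{u_0}$ is equivalent to
\[
\liminf_{t \uparrow T^*_{u_0}} A(t) = -\infty, \qquad A(t) := \inf_{x \in {\Bbb S}} \bigl(k_1 m u_x + k_2 u_x\bigr)(t,x).
\]
Writing $B(t) := \min\bigl\{\inf_x k_1 m u_x(t,x),\ \inf_x k_2 u_x(t,x)\bigr\}$, it suffices to show that $\liminf_{t \uparrow T^*_{u_0}} A(t) = -\infty$ if and only if $\liminf_{t \uparrow T^*_{u_0}} B(t) = -\infty$.

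For the forward direction (blow-up $\Rightarrow$ $\liminf B = -\infty$), I invoke Theorem~\ref{cor-blow-criterion-2} to extract a sequence $t_n \uparrow T^*_{u_0}$ together with points $x_n \in {\Bbb S}$ satisfying $(k_1 m u_x + k_2 u_x)(t_n, x_n) \to -\infty$. The elementary fact that if $a_n + b_n \to -\infty$ and both sequences were bounded below by $-C$ then $a_n + b_n \ge -2C$, a contradiction, allows me to pass to a subsequence along which either $k_1 m u_x(t_n, x_n) \to -\infty$ or $k_2 u_x(t_n, x_n) \to -\infty$. The corresponding spatial infimum descends at least as fast, forcing $B(t_n) \to -\infty$ along that subsequence.

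For the reverse direction ($\liminf B = -\infty$ $\Rightarrow$ finite-time blow-up), I argue by contraposition: if the solution does \emph{not} blow up in finite time, then by Theorem~\ref{t3-1} together with Theorem~\ref{cor-blow-criterion-2} the strong solution extends to $C([0, T^*_{u_0}]; H^s)$ with $s > 5/2$. The Sobolev embedding $H^{s-1} \hookrightarrow L^\infty$ yields a uniform $L^\infty$ bound on $u_x$, and since $s-2 > 1/2$, the embedding $H^{s-2} \hookrightarrow L^\infty$ applied to $u_{xx}$ gives a uniform $L^\infty$ bound on $m = \mu(u) - u_{xx}$. Consequently both $k_1 m u_x$ and $k_2 u_x$ are uniformly bounded in $L^\infty$ on $[0, T^*_{u_0}]$, so the two spatial infima, and hence $B(t)$, are bounded below, giving $\liminf_{t \uparrow T^*_{u_0}} B(t) > -\infty$.

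The only delicate step is the subsequence extraction in the forward direction, where one must carefully handle the possibility that the summand not going to $-\infty$ blows up to $+\infty$ along the chosen sequence; the pigeonhole observation above sidesteps that issue by showing at least one summand must itself diverge to $-\infty$. Apart from this, the argument is bookkeeping built on Theorem~\ref{cor-blow-criterion-2} and the standard Sobolev embeddings.
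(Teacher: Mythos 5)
Your proposal is correct and takes essentially the same route as the paper: both directions are reduced to Theorem~\ref{cor-blow-criterion-2}, the forward one via the elementary comparison between $\inf_{x}\bigl(k_1mu_x+k_2u_x\bigr)$ and $2\min\bigl\{\inf_{x}k_1mu_x,\ \inf_{x}k_2u_x\bigr\}$ (which the paper states as a direct inequality and you phrase equivalently as a pigeonhole on sequences), and the reverse one by contraposition using local existence and Sobolev embedding.
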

\begin{proof}
It is observed that for any $ t \in [0, T) $
$$\underset{x\in
{\Bbb S}}{\inf} \left ( k_1mu_x(t,x)+k_2u_x(t,x) \right ) \ge 2  \min \{ \underset{x\in
{\Bbb S}}{\inf} (k_1mu_x(t,x)), \underset{x\in
{\Bbb S}}{\inf} (k_2u_x(t,x)) \}.$$
If there exists a positive constant $C$ such that
$$\min \{ \underset{x\in
{\Bbb S}}{\inf} (k_1mu_x(t,x)), \underset{x\in
{\Bbb S}}{\inf} (k_2u_x(t,x)) \}  \ge-C,
$$
then
$$\underset{x\in
{\Bbb S}}{\inf} \left ( k_1mu_x(t,x)+k_2u_x(t,x) \right ) \ge-2C.$$
This thus implies  from  Theorem \ref{cor-blow-criterion-2} that the solution $u(t,x)$ does not blow up in finite time.

On the other hand, similar to the proof of Theorem \ref{cor-blow-criterion-2}, if
$$
\underset{t\uparrow T^{\ast}_{u_0}}{\liminf} \left ( \min \{ \underset{x\in
{\Bbb S}}{\inf} (k_1mu_x(t,x)), \underset{x\in
{\Bbb S}}{\inf} (k_2u_x(t,x))  \} \right ) = -\infty,
$$
then the solution will blow-up in finite time.
\end{proof}

\renewcommand{\theequation}{\thesection.\arabic{equation}}
\setcounter{equation}{0}

\section {Wave-breaking mechanism}

In this section, we give some sufficient conditions for the
breaking of waves for the initial-value problem \eqref{cauchy}.
In the case of $ \mu_0 = \mu(u_0) = 0, $ we have the following blow-up result.

\begin{thm}\label{blow-up-1}
Let $k_1>0$, $k_2>0$ and $u_0\in H^s$, $s > 5/2$, $ u_0 \neq 0 $  with $ \mu_0 = 0$.  Let $ T > 0 $ be the
maximal time of existence of the corresponding solution $u(t,x)$
of \eqref{cauchy} with the initial value $ u_0. $ Then the solution $ u $
blows up in  finite time $ T^* < \infty. $
\end{thm}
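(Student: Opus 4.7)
The plan is to reduce the PDE to a Riccati-type inequality for $M(t) := \inf_{x \in \mathbb{S}} u_x(t, x)$, exploiting the algebraic simplifications produced by $\mu_0 = 0$. First I would record the two conservation laws I need: integrating \eqref{cauchy} over $\mathbb{S}$ together with the conservation of $H_1$ shows that $\mu(u)(t) \equiv 0$ and that $\mu_1 = \|u_x(t,\cdot)\|_{L^2}$ is preserved on $[0, T^\ast)$. Since $u_0 \not\equiv 0$ has zero mean we must have $u_{0,x} \not\equiv 0$, so $\mu_1 > 0$ strictly; and the same argument (if $u_{0,x} \ge 0$ with zero mean then $u_{0,x} \equiv 0$) gives $M(0) < 0$ strictly.

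Second, because $u \in C([0, T^\ast); H^s)$ with $s > 5/2$ embeds into $C^1(\mathbb{S})$, the infimum $M(t)$ is attained at some $\zeta(t) \in \mathbb{S}$ with $u_{xx}(t, \zeta(t)) = 0$; the standard Rademacher/Constantin--Escher argument then gives $M \in W^{1,\infty}_{\mathrm{loc}}$ and $\dot M(t) = u_{tx}(t, \zeta(t))$ for a.e. $t$. The key is to compute $u_{tx}$ at $\zeta(t)$: starting from the fully nonlocal form \eqref{cauchy-1}, specializing to $\mu(u) \equiv 0$, and using the identity $-\partial_x^2 A^{-1} f = f - \mu(f)$ (which follows directly from $A = \mu - \partial_x^2$), differentiation in $x$ yields
\[
u_{tx} \;=\; (k_1 u_x^2 - k_2 u)\, u_{xx} \;-\; \tfrac{k_2}{2}\, u_x^2 \;-\; \tfrac{k_2}{2}\, \mu_1^2 .
\]
At $\zeta(t)$ the factor $u_{xx}(t, \zeta(t)) = 0$ annihilates both $k_1$-contributions and the $k_2 u u_{xx}$ term, so
\[
\dot M(t) \;=\; -\tfrac{k_2}{2}\bigl(M(t)^2 + \mu_1^2\bigr), \qquad \text{for a.e.\ } t \in [0, T^\ast).
\]

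A Riccati comparison closes the argument: since $M(0) < 0$ and $k_2, \mu_1 > 0$, one has $\frac{d}{dt}(1/M) \geq k_2 / 2$, hence $1/M(t) \geq 1/M(0) + k_2 t / 2$, which forces $M(t) \searrow -\infty$ before $t = -2/(k_2 M(0))$. Applying Corollary \ref{thm-blow-criterion-2}, using $k_2 > 0$ and $\inf_x k_2 u_x(t,x) \leq k_2 M(t) \to -\infty$ (note also $m(t, \zeta(t)) = \mu(u) - u_{xx} = 0$, so the criterion is comfortably satisfied along $\zeta$), produces finite-time blow-up.

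The main obstacle I anticipate is the algebraic cancellation at $\zeta(t)$: the cubic $k_1$-nonlinearity must be handled by showing that each of its surviving contributions to $u_{tx}$ is proportional to $u_{xx}$, which requires the hypothesis $\mu(u) = 0$ to be used twice---once so that $m(t, \zeta) = 0$, and once in the pointwise reduction of the nonlocal expression $\partial_x^2 A^{-1}(u_x^2)$. Without $\mu_0 = 0$ extra terms of indefinite sign appear in $\dot M$ and the clean Riccati structure is lost, so the assumption is genuinely exploited here.
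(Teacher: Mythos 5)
Your proposal is correct, and the computation at its core is exactly the paper's identity \eqref{blow-up-1-2}: with $\mu(u)\equiv 0$ one gets $u_{xt}=(k_1u_x^2-k_2u)u_{xx}-\tfrac{k_2}{2}u_x^2-\tfrac{k_2}{2}\mu_1^2$ (your derivation via $-\partial_x^2A^{-1}f=f-\mu(f)$ is valid since $\mu(g)=1$; the paper obtains it by integrating the $m$-equation in $x$). The route to the Riccati equation differs, though. The paper evaluates along the characteristic $q(t,x_0)$ of the flow $-k_1u_x^2+k_2u$ emanating from a point with $u_{0,x}(x_0)<0$ --- there the $u_{xx}$-term cancels automatically because the transport speed matches the coefficient of $u_{xx}$ --- and then solves $\dot w=-\tfrac12(w^2+k_2^2\mu_1^2)$ explicitly in terms of tangents. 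You instead track $M(t)=\inf_xu_x$ via Lemma \ref{lem-7.1}, kill the $u_{xx}$-terms by $u_{xx}(t,\zeta(t))=0$, and use a comparison on $1/M$; both yield the same blow-up time scale. Where your argument is genuinely cleaner is the conclusion: because $m(t,\zeta(t))=\mu(u)-u_{xx}(t,\zeta(t))=0$, the quantity $k_1mu_x+k_2u_x$ equals $k_2M(t)\to-\infty$ at $\zeta(t)$, so Theorem \ref{cor-blow-criterion-2} applies directly. The paper, working along a characteristic where $m$ need not vanish, has to confront the fact that $k_1mu_x\to+\infty$ there while $k_2u_x\to-\infty$, which forces the additional explicit computation of $u_{xx}$ along the flow and the auxiliary estimate \eqref{blow-up-7} with the parameter $a$; your observation sidesteps all of that. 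Two small points you should still add: the paper first reduces to $s\ge 3$ by a density argument so that Lemma \ref{lem-7.1} (stated for $C^1([0,T);H^2)$ functions) is applicable, and you need not route the final step through the ``if'' direction of Corollary \ref{thm-blow-criterion-2} at all --- since $M(t)\ge -C\|u(t)\|_{H^s}$ by Sobolev embedding, $M(t)\to-\infty$ before $t=-2/(k_2M(0))$ already contradicts existence of the $H^s$ solution past that time.
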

\begin{proof} In view of  Theorem \ref{thm-blow-criterion-2} with the density argument, it suffices
to consider the case $s\geq3$.
Since $ \mu(u)$ is conserved, so we have $\mu(u)=0$. Thus the equation in \eqref{emmch} is reduced to
\begin{eqnarray}\label{blow-up-1-1}
u_{xxt}-k_1\left(u_x^2u_{xx}\right)_x+k_2(2u_xu_{xx}+uu_{xxx})=0.
\end{eqnarray}
Integrating it with respect to $x$ from $0$ to $x$, we arrive at
\begin{equation} \label{blow-up-1-2}
u_{xt}+(-k_1u_x^2+k_2u)u_{xx}+\frac{k_2}{2}u^2_x= -\frac{k_2}{2}\mu_1^2.
\end{equation}
Notice that $u\in C([0,T); H^s)$ is a periodic function of $x$.
Then there exists a $x_0\in {\Bbb S}$ such that
$u_{0,x}(x_0)<0$. Consider the flow
\eqref{flow-1} with $\mu_0=0$, i.e., the flow $q(t,x)$ governed by
$-k_1u_x^2+k_2u,$
\begin{eqnarray}\label{blow-up-3}
\frac{dq}{dt}=(-k_1u_x^2+k_2u)(t,q(t,x)),\;\; q|_{t=0}=x.
\end{eqnarray}
Along with the flow $ q(t, x)$, we deduce from (\ref{blow-up-1-2})
that
\begin{eqnarray*}
\frac{du_x (t, q(t, x))}{dt}=u_{xt}+(-k_1u_x^2+k_2u)u_{xx}=-\fr {k_2}{2}(u_x^2+\mu_1^2).
\end{eqnarray*}
Let $w(t)=k_2u_{x}(t, q(t,x_0))$. It follows from the above equation that
\begin{eqnarray*}
\frac {dw}{dt}=-\fr 1{2} (w^2+k_2^2 \mu_1^2).
\end{eqnarray*}
Solving it we get
\begin{eqnarray*}
w(t)=k_2u_{x}(t,q(t, x_0))=\mu_1k_2\dfrac{u_{0,x}(x_0)-\mu_1\tan(\frac1{2}\mu_1k_2t)}
{\mu_1+u_{0,x}(x_0)\tan(\frac1{2}\mu_1k_2t)}.
\end{eqnarray*}
It is thereby inferred that
$$
\inf_{x \in \mathbb{S}} (k_2 u_x)(t, x) \le k_2 u_x(t, q(t, x_0))
\longrightarrow - \infty,  \quad {\rm as} \; t \to T^* \le t^*,
$$
where $t^*=-2\arctan(\frac{\mu_1}{u_{0,x}(x_0)})/(\mu_1k_2) < \infty$.

Let $U(t)=k_1u_{xx}(t, q(t,x_0))$. It follows from \eqref{blow-up-1-1} and \eqref{blow-up-3} that
\begin{eqnarray*}
\frac {dU}{dt}=k_1u_{xxt}+(-k_1u_x^2+k_2u)k_1u_{xxx}=2u_x(U^2-k_2U).
\end{eqnarray*}
Solving it we get
\begin{eqnarray*}
U(t)=k_1u_{xx}(t,q(t, x_0))=\dfrac{k_2U(0)}{U(0)-(U(0)-k_2)\left(\cos(\frac1{2}\mu_1k_2t)
+\frac{u_{0,x}(x_0)}{\mu_1}\sin(\frac1{2}\mu_1k_2t)\right)^4}.
\end{eqnarray*}
Accordingly, we have
\begin{align*}
k_1(mu_{x})(t,q(t, x_0))=&\;-k_1(u_xu_{xx})(t,q(t, x_0))\\=&\;-\dfrac{\mu_1k_1k_2u_{0,xx}(x_0)\left(u_{0,x}(x_0)-\mu_1\tan(\frac1{2}\mu_1k_2t)\right)}
{\left(\mu_1+u_{0,x}(x_0)\tan(\frac1{2}\mu_1k_2t)\right) J_1},
\end{align*}
where
\begin{eqnarray*}
J_1=k_1u_{0,xx}(x_0)-(k_1u_{0,xx}(x_0)-k_2)\cos^4(\frac1{2}\mu_1k_2t)
\Big(1+\frac{u_{0,x}(x_0)}{\mu_1}\tan(\frac1{2}\mu_1k_2t)\Big)^4.
\end{eqnarray*}
It is thereby inferred that
$$
 k_1(mu_x)(t, q(t, x_0))\longrightarrow + \infty  \quad {\rm as} \quad  t \to T^* \le t^*.
$$
Moreover, a simple computation shows that for any constant $ a $
 \begin{equation}\label{blow-up-7}
 \begin{split}
&(k_1 u_x m +ak_2u_x)(t, q(t, x_0))\\
&\;=\frac{\tan\alpha}{J_2}\Big[k_1(1-a)\tan^4\alpha+2k_1(1-a)\tan^2\alpha+k_1\\
&\;\qquad +ak_2\mu_1(1+\tan^2\theta)^2\tan\theta (u_{0,x}m_0)^{-1}+ak_1(\tan^4\theta+2\tan^2\theta)\Big],
\end{split}
\end{equation}
where
\begin{eqnarray*}
\begin{aligned}
 &\; J_2=\frac{k_1}{k_2\mu_1}\Big[(\tan^4\alpha+2\tan^2\alpha)-(1+\tan^2\theta)^2\tan\theta (u_{0,x}m_0)^{-1}-(\tan^4\theta+2\tan^2\theta)\Big],\\
&\alpha=\fr{1}{2}k_2\mu_1t-\theta, \quad
\theta = \arctan \left (\frac{u_{0,x}(x_0)}{\mu_0} \right ).
\end{aligned}
\end{eqnarray*}
When  $a>1$, it then follows from \eqref{blow-up-7} that
$$(k_1 u_x m +ak_2u_x)(t, q(t, x_0)) \rightarrow-\infty, \;\; {\rm as}\;\; t\rightarrow T^* \le t^*,
$$which implies the desired result.
\end{proof}

To deal with blow-up solution with $ \mu_0 \neq 0, $ the following two lemmas will be useful.
\begin{lem}\label{prop-blow-1}
Let $u_0\in H^s, s \ge 3$ and $T>0$ be the maximal time
of existence of the corresponding solution $u(t,x)$ to
\eqref{cauchy} with the initial data $u_0$.
Then  $M=m u_x $, $P=u_x$ and $\Gamma=(k_1 m+k_2)u_x$ satisfy
\begin{subequations}\label{blow-1-0}
 \begin{equation}\label{blow-1-0-1}
 \begin{split}
M_{t}+&\left(k_1(2\mu_0u-u_x^2)+k_2u\right)M_x\\
=&\;-2k_1M^2-\fr{5}{2}k_2u_x M +k_1\left[2\mu_0^2(u-\mu_0)-\mu_0(u_x^2+\mu_1^2)\right]m\\
&\;+k_2\left[2\mu_0(u-\mu_0)-\fr{1}{2}\mu_1^2\right]m,
 \end{split}
 \end{equation}
 \begin{equation}\label{blow-1-0-2}
 \begin{split}
\quad P_{t}+&\left(k_1(2\mu_0u-u_x^2)+k_2u\right)P_x\\
=&\;k_1\left[2\mu_0^2(u-\mu_0)-\mu_0(u_x^2+\mu_1^2)\right]+k_2\left[2\mu_0(u-\mu_0)-\fr{1}{2}(u_x^2+\mu_1^2)\right],
 \end{split}
 \end{equation}
 and
 \begin{eqnarray}\label{blow-1-0-3}
 \begin{aligned}
\qquad \quad\Gamma_{t}+&\left(k_1(2\mu_0u-u_x^2)+k_2u\right)\Gamma_x\\
=&\;-2\Gamma^2+(\fr 32 k_2-k_1\mu_0)u_x\Gamma \\
&\; +(k_1m+k_2)\Big[k_1\left(2\mu_0^2(u-\mu_0)-\mu_0\mu_1^2 \right)+k_2 \left(2\mu_0(u-\mu_0)-\fr{1}{2}\mu_1^2\right)\Big].
\end{aligned}
\end{eqnarray}
\end{subequations}
\end{lem}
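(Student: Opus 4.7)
The key organising observation is that $\mu(u)\equiv\mu_0$ is conserved in time, so every occurrence of $\mu(u)$ in \eqref{m-m-mu-ch} or in the nonlocal form \eqref{cauchy-1} may be replaced by the constant $\mu_0$. Let $c(t,x):=k_1(2\mu_0 u-u_x^2)+k_2 u$ denote the common convection speed appearing on the left-hand side of each identity. A direct computation gives
\begin{equation*}
c_x = 2k_1 u_x(\mu_0-u_{xx})+k_2 u_x = (2k_1 m+k_2)\,u_x,
\end{equation*}
and collecting the $m_x$-terms in \eqref{m-m-mu-ch} rewrites that equation in transport form
\begin{equation*}
m_t + c\,m_x = -2(k_1 m + k_2)\,u_x\,m .
\end{equation*}
This is the engine for everything that follows.

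The central step is \eqref{blow-1-0-2} for $P = u_x$. My plan is to use $P_t + c P_x = \partial_x(u_t + cu_x) - c_x u_x$ and first compute $u_t + cu_x$ from \eqref{cauchy-1}: the quadratic pieces $k_1\cdot 2\mu_0 u\cdot u_x$ and $k_2 uu_x$ cancel against the corresponding pieces of $cu_x$, leaving
\begin{equation*}
u_t + c u_x = -\tfrac{2k_1}{3}\,u_x^3 - k_1\,\partial_x A^{-1}(2\mu_0^2 u + \mu_0 u_x^2) - \tfrac{k_1}{3}\mu(u_x^3) - k_2\,A^{-1}\partial_x(2\mu_0 u + \tfrac{1}{2}u_x^2).
\end{equation*}
Differentiating in $x$ and invoking the basic identity $\partial_x^2 A^{-1} f = \mu(f) - f$ (immediate from $A = \mu - \partial_x^2$ together with $\mu(v_{xx}) = 0$ on $\Bbb S$), applied to $f_1 = 2\mu_0^2 u + \mu_0 u_x^2$ and $f_2 = 2\mu_0 u + \tfrac{1}{2}u_x^2$, I use their circle averages $\mu(f_1) = 2\mu_0^3 + \mu_0\mu_1^2$ and $\mu(f_2) = 2\mu_0^2 + \tfrac{1}{2}\mu_1^2$ to account for the $\mu_1^2$-terms in the statement. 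Subtracting $c_x u_x = (2k_1 m + k_2)u_x^2$ and collapsing the remaining $u_x^2$-contributions via $u_{xx} + m = \mu_0$ (so that $-2k_1 u_x^2 u_{xx} - 2k_1 u_x^2 m$ becomes $-2k_1\mu_0 u_x^2$) produces exactly \eqref{blow-1-0-2}.

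The identities \eqref{blow-1-0-1} and \eqref{blow-1-0-3} then follow from the Leibniz rule along the characteristic flow,
\begin{equation*}
M_t + cM_x = u_x(m_t + c m_x) + m(P_t + c P_x), \quad \Gamma_t + c\Gamma_x = k_1 u_x(m_t + cm_x) + (k_1 m + k_2)(P_t + cP_x),
\end{equation*}
into which Steps 1--2 are substituted. For $M$, the factor $u_x(m_t + cm_x) = -2k_1 M^2 - 2k_2 u_x M$ (using $u_x^2 m^2 = M^2$ and $u_x^2 m = u_x M$) yields the $-2k_1 M^2$-term, while the leftover $-\tfrac{1}{2} k_2 u_x^2 m = -\tfrac{1}{2} k_2 u_x M$ hiding inside $m(P_t + cP_x)$ combines with $-2k_2 u_x M$ to give $-\tfrac{5}{2}k_2 u_x M$. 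For $\Gamma$, grouping the $u_x^2$-pieces produces $-2\Gamma^2 + (\tfrac{3}{2}k_2 - k_1\mu_0) u_x \Gamma$ via the algebraic identity $-2(k_1 m + k_2) + (\tfrac{3}{2}k_2 - k_1\mu_0) = -2k_1 m - \tfrac{1}{2}k_2 - k_1\mu_0$. The only genuinely delicate point in the whole argument is the nonlocal bookkeeping in Step 2; once the $A^{-1}$-identity is invoked and $u_{xx} + m = \mu_0$ is used to clear the $u_x^2 u_{xx}$-contribution, the rest is elementary algebra.
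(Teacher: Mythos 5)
Your proposal is correct and follows essentially the same route as the paper: derive the transport identity for $u$ along the speed $c=k_1(2\mu_0u-u_x^2)+k_2u$ (the paper obtains it by applying $A=\mu-\partial_x^2$ to $u_t+cu_x$, you read it off from the nonlocal form \eqref{cauchy-1} and use $\partial_x^2A^{-1}f=\mu(f)-f$, which is the same computation), differentiate to get \eqref{blow-1-0-2}, and then combine with the transport equation for $m$ via the Leibniz rule to obtain \eqref{blow-1-0-1} and \eqref{blow-1-0-3}. All the algebra, including the use of $u_{xx}+m=\mu_0$ and the bookkeeping producing the coefficients $-\tfrac52 k_2$ and $\tfrac32 k_2-k_1\mu_0$, checks out.
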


\begin{proof} We will give a proof of the last estimate \eqref{blow-1-0-3} only, since the other cases can be obtained similarly.
In view of \eqref{emmch}, a direct computation
shows
 \begin{eqnarray*}
 \begin{split}
(\mu-\partial_x^2)&\left[u_{t}+ \big(k_1(2\mu_0u-u_x^2)+k_2u\big)u_x\right]\\
 =&\;m_t-k_1\mu(u_x^3)-\left[k_1(2\mu_0u-u_x^2)u_{xx}+2k_1u_x^2m+k_2(uu_{xx}+u_x^2)\right]_x\\
 =&\;k_1\left(-2\mu_0^2u_x-2\mu_0u_xu_{xx}+4u_xu_{xx}^2+2u_x^2u_{xxx}-\mu(u_x^3)\right)\\
 &\;-k_2(2\mu_0u_x+u_xu_{xx}),
 \end{split}
\end{eqnarray*}
which implies
 \begin{eqnarray}\label{blow-1-1}
 \begin{split}
&u_{t}+ \big(k_1(2\mu_0u-u_x^2)+k_2u\big)u_x\\
&\;=(\mu-\partial_x^2)^{-1}
\big[k_1\left(-2\mu_0^2u_x-2\mu_0u_xu_{xx}+4u_xu_{xx}^2+2u_x^2u_{xxx}-\mu(u_x^3)\right)\\
 &\qquad -k_2(2\mu_0u_x+u_xu_{xx})\big].
\end{split}
\end{eqnarray}
Differentiating it with respect to $x$ leads to
 \begin{equation}\label{blow-1-2}
 \begin{split}
u_{xt}+&k_1\left((2\mu_0u-u_x^2)u_{xx}+2u_x^2m\right)+k_2(uu_{xx}+u_x^2)\\
=&\;k_1\left[2\mu_0^2(u-\mu_0)+\mu_0(u_x^2-\mu_1^2)-2u_x^2u_{xx}\right]\\
&\;+k_2\left[2\mu_0(u-\mu_0)+\fr{1}{2}(u_x^2-\mu_1^2)\right].
 \end{split}
\end{equation}
Clearly,
\begin{equation}\label{blow-1-3}
u_x m_t+\left(k_1(2\mu_0u-u_x^2)+k_2u\right)m_xu_x=-2k_1u_x^2m^2-2k_2m u_x^2.
\end{equation}
Multiplying \eqref{blow-1-2} by $m$ and adding the resulting one
with \eqref{blow-1-3}, we arrive at
\begin{equation}\label{blow-1-6}
 \begin{split}
M_{t}+&\left(k_1(2\mu_0u-u_x^2)+k_2u\right)M_{x}\\
=&\;-4k_1M^2-3k_2u_xM+k_1\left[2\mu_0^2(u-\mu_0)+\mu_0^2(u_x^2-\mu_1^2)-2u_x^2u_{xx}\right]m\\
&\;+k_2\left[2\mu_0(u-\mu_0)+\fr{1}{2}(u_x^2-\mu_1^2)\right]m.
 \end{split}
\end{equation}
Combining \eqref{blow-1-2} and \eqref{blow-1-6}, we obtain
\begin{equation}\label{blow-1-7}
 \begin{split}
\Gamma_{t}+&\left(k_1(2\mu_0u-u_x^2)+k_2u\right)\Gamma_{x}\\
=&\;-2k_1^2M^2-\fr{5}{2}k_1k_2u_xM-\fr{1}{2}k_2^2u_x^2+k_1^2\left[2\mu_0^2(u-\mu_0)-\mu_0(u_x^2+\mu_1^2)\right]m\\
&\;+k_1k_2\left[2\mu_0(u-\mu_0)-\fr{1}{2}\mu_1^2\right]m+ k_1k_2\left[2\mu_0^2(u-\mu_0)-\mu_0(u_x^2+\mu_1^2)\right]\\
&\;+k_2^2\left[2\mu_0(u-\mu_0)-\fr{1}{2}\mu_1^2\right].
 \end{split}
\end{equation}
A simply computation then gives the desired result.
\end{proof}

\begin{lem}\label{lem-blow-2} Let $T>0$ be the maximal time
of existence of the solution $u(t,x)$ to the Cauchy problem
\eqref{cauchy} with initial data $u_0\in H^s, s \ge 3$. Assume
$k_1\geq0$, $k_2\geq0$ and $m_0(x)\geq 0$ for all $x\in {\Bbb S}$. Then there hold
 \begin{equation}\label{blow-2-0}
 \begin{split}
 &\;\;|u_x(t,x)|\leq u(t,x),\\
&\; \Gamma_{t}+\left(k_1(2\mu_0u-u_x^2)+k_2u\right)\Gamma_x\leq -2\Gamma^2+C_1 u_x\Gamma +C_2 (k_1m+k_2),
\end{split}
\end{equation}
where the constants $ C_1 $ and $ C_2 $ are given by
\begin{eqnarray*}
\begin{aligned}
C_1=&\;\fr32 k_2-k_1 \mu_0,\\
C_2=&\;k_1\mu_0\mu_1(\fr {\sqrt{3}}{3}\mu_0-\mu_1)+k_2\mu_1(\fr {\sqrt{3}}3\mu_0-\fr 12\mu_1).
\end{aligned}
\end{eqnarray*}
\end{lem}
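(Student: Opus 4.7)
The plan splits naturally into the pointwise comparison $|u_x|\le u$ and the differential inequality for $\Gamma$; both rest on the positivity $m(t,\cdot)\ge 0$, which propagates from $m_0\ge 0$ by Remark \ref{rmk-flow-1}.

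For the first inequality, I would use the representation $u=g\ast m$ and $u_x=g_x\ast m$ from \eqref{g}--\eqref{green-derivative}. The key pointwise estimate is $|g_x(x)|\le g(x)$ on $[0,1]$: indeed $|g_x(x)|=|x-\tfrac12|\le\tfrac12$, while $g(x)\ge\tfrac{23}{24}>\tfrac12$. Since $m\ge 0$ on $\mathbb S$, this yields
\begin{equation*}
|u_x(t,x)|=\Bigl|\int_{\mathbb S}g_x(x-y)\,m(t,y)\,dy\Bigr|\le \int_{\mathbb S}|g_x(x-y)|\,m(t,y)\,dy\le \int_{\mathbb S}g(x-y)\,m(t,y)\,dy=u(t,x).
\end{equation*}

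For the differential inequality, I would start from identity \eqref{blow-1-0-3} in Lemma \ref{prop-blow-1}, which already supplies the $-2\Gamma^{2}$ term and the linear term $(\tfrac32 k_2-k_1\mu_0)u_x\Gamma$ with coefficient exactly $C_1$. It therefore remains to bound the inhomogeneous term
\begin{equation*}
(k_1 m+k_2)\Big[k_1\bigl(2\mu_0^2(u-\mu_0)-\mu_0\mu_1^2\bigr)+k_2\bigl(2\mu_0(u-\mu_0)-\tfrac12\mu_1^2\bigr)\Big]
\end{equation*}
from above by $C_2(k_1 m+k_2)$. Since $k_1,k_2\ge 0$ and $m\ge 0$, the prefactor $k_1m+k_2$ is nonnegative, so it suffices to bound the bracket. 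Because $\mu_0=\mu(u)=\mu(g)\,\mu(m)=\mu(m)\ge 0$, the coefficients $2\mu_0^2$ and $2\mu_0$ multiplying $(u-\mu_0)$ are nonnegative, hence applying Lemma \ref{sobolev-3} in the form $u-\mu_0\le\tfrac{\sqrt 3}{6}\mu_1$ gives
\begin{equation*}
2\mu_0^2(u-\mu_0)-\mu_0\mu_1^2\le \mu_0\mu_1\bigl(\tfrac{\sqrt 3}{3}\mu_0-\mu_1\bigr),\qquad 2\mu_0(u-\mu_0)-\tfrac12\mu_1^2\le \mu_1\bigl(\tfrac{\sqrt 3}{3}\mu_0-\tfrac12\mu_1\bigr),
\end{equation*}
whose $k_1$- and $k_2$-combination is exactly $C_2$. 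Substituting into \eqref{blow-1-0-3} yields the claimed inequality.

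I do not expect any serious obstacle: the only subtleties are (a) verifying the elementary comparison $|g_x|\le g$ on the fundamental domain (which is purely arithmetic), and (b) keeping track of the signs so that the bound on $u-\mu_0$ can be inserted with the correct orientation, for which the standing hypotheses $k_1,k_2\ge 0$ and $m_0\ge 0$ (yielding $\mu_0\ge 0$ and $k_1m+k_2\ge 0$) are used in an essential way.
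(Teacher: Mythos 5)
Your proposal is correct and follows essentially the same route as the paper: positivity of $m$ propagated from $m_0\ge 0$, a pointwise kernel comparison for $|u_x|\le u$, and then the identity \eqref{blow-1-0-3} combined with $\|u-\mu_0\|_{L^\infty}\le\frac{\sqrt3}{6}\mu_1$ and the sign conditions $k_1,k_2,\mu_0, k_1m+k_2\ge 0$ to produce $C_1$ and $C_2$. The only cosmetic difference is in the first step, where the paper verifies $g\pm g_x\ge 0$ by completing the square ($g\pm g_x=\frac12(x\pm\frac12\mp 1)^2+\frac{11}{24}$ up to the branch), whereas you use the equivalent bound $|g_x|\le\frac12<\frac{23}{24}\le g$.
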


\begin{proof} Since $m_0 \ge 0$, Lemma \ref{lem-flow-1}
implies that $m(t,x) \ge 0$ for any $t>0$, $x\in{\Bbb S}$. According to \eqref{g},
there holds
\begin{align*}
u(t,x)=g\ast m=\int_{{\Bbb S}}[\fr 12 (x-y-\fr
{1}{2})^2+\fr{23}{24}]m(t,y)dy>0,
\end{align*}
for any $t>0$, $x\in {\Bbb S}$. Note that $\mu_0=\mu(u)>0$, and
$ u_x=\int_{{\Bbb S}}(x-y-\fr {1}{2})m(t,y)dy. $ It follows that
\begin{align*}
u+u_x=&\int_{{\Bbb S}}[\fr 1{2}(x-y+\fr
{1}{2})^2+\fr{11}{24}]m(t,y)dy>0,\\
u-u_x=&\int_{{\Bbb S}}[\fr 1{2}(x-y-\fr
{3}{2})^2+\fr{11}{24}]m(t,y)dy>0.\\
\end{align*}
This implies
\begin{equation*}
 |u_x(t, x)| \leq u(t, x),
\end{equation*}
for all $(t, x) \in [0, T) \times {\Bbb S}$. By Lemma \ref{sobolev-3}, we have
\begin{equation}
 \|u-\mu_0\|_{L^{\infty}}\leq \fr{\sqrt{3}}{6}\mu_1.
\end{equation}
In view of \eqref{blow-1-0-3}, it follows that
 \begin{equation*}
 \begin{split}
\Gamma_t+&\left(k_1(2\mu_0u-u_x^2)+k_2u\right)\Gamma_x\\
=&-2\Gamma^2+ (\fr 32 k_2-k_1\mu_0) u_x\Gamma\\
&+(k_1m+k_2)\left[k_1\left(2\mu_0^2(u-\mu_0)-\mu_0\mu_1^2\right)+k_2\left(2\mu_0(u-\mu_0)-\fr 12 \mu_1^2\right)\right]\\
\leq &\;-2\Gamma^2+ (\fr 32 k_2-k_1\mu_0) u_x\Gamma\\
&+(k_1m+k_2)\left[k_1 \mu_0\mu_1\left(\fr {\sqrt{3}}3\mu_0-\mu_1\right)+k_2\mu_1\left(\fr {\sqrt{3}}3 \mu_0-\fr 12 \mu_1\right)\right].
\end{split}
\end{equation*}
This completes the proof of Lemma \ref{lem-blow-2}.
\end{proof}

In the following,  we use the notations $\tau=\frac tb$, $b=\sqrt{\frac 1{2k_2C_2}}$, $n(\tau,x)=\frac 1{m(t,x)}$, $n_0(x)=n(0,x)$, and $\Gamma=k_1mu_x+k_2 u_x$ for convenience of the certain complicated computations.

In view of signs of $C_1$ and $C_2$, we need to consider four cases: (i). $C_1\leq 0$, $C_2\leq 0$; (ii). $C_1 \leq 0$, $C_2 > 0$; (iii). $C_1 > 0$, $C_2 \geq 0$; (iv). $C_1 > 0$, $C_2 < 0$. For the case (i), it is obvious to see from \eqref{blow-2-0} that the solution blows up at finite time. Here we only consider the case (ii). The other cases can be discussed in a similar manner. For the case (ii), we have the following blow-up result, which demonstrates that wave-breaking depends on the infimum of $\Gamma=k_1mu_x+k_2 u_x$ for time $ t \in [0, T). $
\begin{thm}\label{thm-7.2}
Suppose $k_1>0$, $k_2>0$, $C_1\leq 0$, $C_2>0$ and $u_0\in H^s$, with $s > 5/2$. Let $ T^*
> 0 $ be the maximal time of existence of the
 the corresponding solution $m(t,x)$ of \eqref{cauchy} with the initial value $ m_0$. Assume
 $m_0(x)=(\mu-\partial_x^2)u_0\geq0$ for any $x\in{\mathbb S }$ and $m_0(x_0)>0$ for some $x_0\in \Bbb S$ and that
\begin{subequations}
\begin{equation}\label{blow-data-1a}
 (1) \quad \qquad  \qquad \qquad \qquad\qquad \qquad  u_{0,x}(x_0)=-\fr 1 {2bk_2}, \qquad {\it or} \qquad  \qquad
\end{equation}
\begin{equation}\label{blow-data-1b}
(2) \qquad \qquad\qquad  u_{0,x}(x_0)< \min \left \{-\frac 1{2bk_2}, -\frac {\sqrt{k_2+2 k_1m_0(x_0)}}{2b\sqrt{k_2}(k_1m_0(x_0)+k_2)} \right \}.
\end{equation}
\end{subequations}
 Then the solution blows up at finite time $ T^* \leq t^* $, where
\begin{eqnarray*}
\begin{aligned}
&t^*=\left\{\begin{array}{c}
\sqrt{\fr 1{2k_2C_2}}\ln \left(\frac {k_2}{k_1m_0(x_0)}+1\right), \qquad  {\rm for} \quad  {\rm (1)},\\[.5cm]
\sqrt{\fr 1{2k_2C_2}}\ln \left(\frac {\fr {k_1}{k_2}-\sqrt{n_{\tau}^2(0,x_0)-n_0^2(x_0)-2\fr {k_1}{k_2}n_0(x_0)}}{n_0(x_0)+n_{\tau}(0,x_0)+\fr{k_1}{k_2}}\right), \qquad  \;\; {\rm for} \quad {\rm (2)}\\
\end{array}\right.
\end{aligned}
\end{eqnarray*}
with
\begin{eqnarray*}
\begin{aligned}
n_{\tau}(0,x_0)=2b\big(k_1+\fr{k_2}{m_0(x_0)}\big)u_{0,x}(x_0).
\end{aligned}
\end{eqnarray*}
\end{thm}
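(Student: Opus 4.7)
The plan is to propagate the differential inequality from Lemma \ref{lem-blow-2} along the characteristic $q(t,x_0)$ defined by \eqref{flow-1}, convert it into a \emph{linear} second-order inequality via the substitution $n=1/m$, and then compare against a linear ODE whose blow-up can be read off explicitly.

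First, I specialize \eqref{blow-2-0} to the trajectory through $x_0$. Since $m_0\ge 0$ and $m_0(x_0)>0$, Remark \ref{rmk-flow-1} and \eqref{diff-flow-2} give $m(t,q(t,x_0))>0$ on $[0,T^*)$, so that $u_x\,\Gamma=(k_1m+k_2)u_x^2\ge 0$. Combined with $C_1\le 0$, the middle term in \eqref{blow-2-0} becomes nonpositive and yields, along the trajectory,
\begin{equation*}
\dot\Gamma(t)\le -2\Gamma^2 + C_2(k_1m+k_2).
\end{equation*}
Setting $n(t)=1/m(t,q(t,x_0))$ and using \eqref{diff-flow-2} gives $\dot n=2\Gamma n$, hence $\Gamma=\dot n/(2n)$. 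Differentiating once more and inserting the above inequality,
\begin{equation*}
\ddot n = 2n(\dot\Gamma+2\Gamma^2)\le 2C_2(k_1+k_2 n).
\end{equation*}
This is the crucial gain: the quadratic Riccati term in $\Gamma$ cancels exactly, leaving a \emph{linear} second-order inequality for $n$. Rescaling time by $\tau=t/b$ with $b=(2k_2C_2)^{-1/2}$ normalizes it to $n_{\tau\tau}\le n+k_1/k_2$, with initial data $n(0)=1/m_0(x_0)$ and $n_\tau(0)=n_\tau(0,x_0)$ exactly as stated.

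Next, I would compare $n$ with the solution $N(\tau)$ of the associated ODE $N_{\tau\tau}=N+k_1/k_2$ sharing the same initial data. The difference $h=n-N$ satisfies $h_{\tau\tau}\le h$, $h(0)=h_\tau(0)=0$; putting $\phi=h_\tau+h$ gives $(\phi e^{-\tau})_\tau\le 0$, hence $\phi\le 0$, and then $(h e^\tau)_\tau\le 0$ gives $n\le N$ on their common interval of existence. Solving the ODE,
\begin{equation*}
N(\tau) = \alpha e^\tau + \beta e^{-\tau} - \tfrac{k_1}{k_2},\qquad
\alpha=\tfrac12\bigl(n(0)+n_\tau(0)+\tfrac{k_1}{k_2}\bigr),\quad
\beta=\tfrac12\bigl(n(0)-n_\tau(0)+\tfrac{k_1}{k_2}\bigr).
\end{equation*}
The threshold in \eqref{blow-data-1a} is precisely $\alpha=0$: then $N=\beta e^{-\tau}-k_1/k_2$ is strictly decreasing and vanishes at $\tau^*=\ln(\beta k_2/k_1)=\ln(1+k_2/(k_1 m_0(x_0)))$. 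The strict bound \eqref{blow-data-1b} is equivalent to $\alpha<0$ \emph{together with} positivity of the discriminant $(k_1/k_2)^2-4\alpha\beta = n_\tau(0,x_0)^2-n_0(x_0)^2-2(k_1/k_2)n_0(x_0)$; setting $N(\tau^*)=0$ then reduces to a quadratic in $e^{\tau^*}$ whose unique positive root exceeding $1$ gives exactly the stated value of $t^*$.

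Finally, since $n(\tau)>0$ as long as the strong solution exists but $n(\tau)\le N(\tau)$ and $N(\tau^*)=0$, the strong solution must break down by $t^*=b\tau^*$. Combining this with Theorem \ref{cor-blow-criterion-2} identifies the breakdown as wave breaking. The step I expect to be the main obstacle is not the ODE analysis but the collapse from the Riccati-type inequality for $\Gamma$ to the linear one for $n=1/m$: this is where the sign hypotheses ($k_1,k_2>0$, $C_1\le0$, $m_0\ge0$) are used in a delicately balanced way and where the nonlocal nonlinearities that usually obstruct classical blow-up arguments are neutralized. Once this reduction is in place the bookkeeping between the two conditions \eqref{blow-data-1a}--\eqref{blow-data-1b} on $u_{0,x}(x_0)$ and the sign structure of $(\alpha,\beta)$ that forces $N$ to vanish in finite $\tau$ is a routine case analysis.
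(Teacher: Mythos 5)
Your proposal is correct and follows essentially the same route as the paper: the inequality of Lemma \ref{lem-blow-2} along the characteristic, the substitution $n=1/m$ with $\dot n=2\Gamma n$ collapsing the Riccati term, the rescaling $\tau=t/b$ to $(1-\partial_\tau^2)n\ge -k_1/k_2$, and the same case split on the sign of $n_0+n_\tau(0)+k_1/k_2$. The only cosmetic differences are that you package the two explicit integrations (against $e^{\tau}$ and $e^{-2\tau}$) as a comparison principle with the solution $N$ of the associated linear ODE, and that the paper additionally tracks $\Gamma(t,q(t,x_0))\to-\infty$ explicitly to tie the breakdown to the criterion of Theorem \ref{cor-blow-criterion-2}, whereas you conclude directly from $m\to+\infty$.
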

\begin{proof} Again, by  Theorem \ref{thm-blow-criterion-2} with the density argument, it suffices
to consider the case $s\geq3$. Recall $\Gamma=k_1mu_x+k_2u_x$. Thanks to \eqref{flow-1} and \eqref{blow-1-0-3}, along the flow in \eqref{flow-1} we have
\begin{equation}\label{eq-7.18}
 \begin{split}\Gamma_{t}+\left(k_1(2\mu_0u-u_x^2)+k_2u\right)\Gamma_x\leq -2\Gamma^2+C_1 u_x\Gamma +C_2 (k_1m+k_2).
 \end{split}
\end{equation}
Similarly, one can see from \eqref{cauchy} that
 \begin{equation*}\frac{dm}{dt}(t,q(t,x_0))=-2m\Gamma(t,q(t,x_0)).
  \end{equation*}
Notice that $3k_2-2\mu_0k_1 \leq 0$, which implies that
\begin{equation}\label{eq-7.19}
 \begin{split}
-\frac{d^2}{dt^2}n(\tau,q(t,x_0))=&\frac d{dt}(\frac1{m^2}\frac{dm(\tau,q(t,x_0))}{dt})=-2\frac d{dt}(\frac \Gamma m)(\tau,q(t,x_0))\\
\geq &\;\big(2k_1\mu_0-3k_2\big)u_x^2\big(k_1+\fr {k_2}m\big)\\
&\;-2\big[k_1\mu_0\mu_1(\frac{\sqrt{3}}3\mu_0-\mu_1)+k_2\mu_1(\frac{\sqrt{3}}3\mu_0-\frac12\mu_1)\big](k_1+\frac1mk_2),\\
\geq&\; -2C_2(k_1+k_2n)(\tau,q(t,x_0)).
 \end{split}
\end{equation} It is found from  (\ref{eq-7.19}) that
\begin{equation}\label{eq-7.20}
(1-\frac{d^2}{d\tau^2})n(\tau,q(t,x_0)) \geq-\frac{k_1}{k_2}.
\end{equation}
Multiplying (\ref{eq-7.20}) by $e^{\tau}$ and integrating the resulting equation from $0$ to
$\tau$, we obtain
\begin{equation}\label{eq-7.21}
e^{\tau}\big(n(\tau,q(t,x_0))-n_\tau(\tau,q(t,x_0))\big)\geq n_0(x_0)-n_{\tau}(0,x_0)-\frac{k_1}{k_2}(e^\tau-1).
\end{equation}
Furthermore, multiplying (\ref{eq-7.21}) by $e^{-2\tau}$ and integrating it from $0$ to $\tau$ yields
\begin{equation}\label{eq-7.22}
 \begin{split}
0\leq&\; n(\tau,q(t,x_0))\\
 \leq&\; \frac12 e^\tau\Big(n_0(x_0)+n_{\tau}(0,x_0)+\frac{k_1}{k_2}\Big)-\frac{k_1}{k_2}
+\frac12e^{-\tau}\Big(n_0(x_0)-n_{\tau}(0,x_0)+\frac{k_1}{k_2}\Big).
 \end{split}
\end{equation}
In view of the assumptions of the theorem, we now consider two possibilities.

\noindent (1). If $x_0$ satisfies \eqref{blow-data-1a}, then we have
\begin{eqnarray*}
\begin{aligned}
n_0(x_0)&\;+n_{\tau}(0,x_0)+\fr {k_1}{k_2}=\fr 1{m_0(x_0)}+\fr {2b\Gamma(0,x_0)}{m_0(x_0)}+\fr {k_1}{k_2}\\
=&\;\big(1+2bk_2 u_{0,x}(x_0)\big)\big(\fr {k_1}{k_2}+\fr 1{m_0(x_0)}\big)=0.
\end{aligned}
\end{eqnarray*}
It implies that
\begin{eqnarray*}
\begin{aligned}
n_0(x_0)&\;-n_{\tau}(0,x_0)+\fr {k_1}{k_2}=\fr 2{m_0(x_0)}+\fr {2k_1}{k_2}.
\end{aligned}
\end{eqnarray*}
From \eqref{eq-7.22} and above expressions, we have
\begin{eqnarray*}
0\leq \frac 1{m(t,q(t,x_0))} \leq -e^{-\tau}\left(\fr{k_1}{k_2}e^{\tau}-\fr 1{m_0(x_0)}-\fr {k_1}{k_2}\right).
\end{eqnarray*}
It implies that one may find a time $0<T^*\leq t^*=\sqrt{\fr 1{2C_2k_2}}\ln (1+\fr {k_2}{k_1m_0(x_0)})$ such that
 \begin{eqnarray*}
 m(t,q(t,x_0)) \rightarrow +\infty, \;\;{\rm as} \;\; t\rightarrow T^*.
\end{eqnarray*}
In terms of \eqref{eq-7.21}, we also have
\begin{eqnarray}\label{eq-7.23}
\begin{aligned}
\Gamma(t,q(t,x_0))=&\;-\frac{m_t(t,q(t,x_0))}{2m(t,q(t,x_0))}=\frac 1{2b}\frac {n_{\tau}(\tau,q(t,x_0))}{n(\tau,q(t,x_0))}\\
\leq &\; \frac 1{2bn(\tau,q(t,x_0))}\left[n(\tau,q(t,x_0))+e^{-\tau}\left(\fr {k_1}{k_2}(e^{\tau}-1)-n_0(x_0)+n_{\tau}(0,x_0) \right)\right]\\
= &\; \frac 1{2b}\left[1+e^{-\tau}\left(\fr {k_1}{k_2}(e^{\tau}-1)-n_0(x_0)+n_{\tau}(0,x_0) \right)m(t,q(t,x_0))\right]\\
=&\; \frac 1{2b}\left[1+e^{-\tau}\left(\fr {k_1}{k_2}e^{\tau}-2n_0(x_0)-\fr {2k_1}{k_2} \right)m(t,q(t,x_0))\right].
\end{aligned}
\end{eqnarray}
As discussed above, we see that
\begin{eqnarray*}
\begin{aligned}
\fr {k_1}{k_2}e^{\tau}-2n_0(x_0)-\fr {2k_1}{k_2}\leq -\big(n_0(x_0)+\fr {k_1}{k_2} \big), \;\; {\rm for }\;\; t \leq T^*.
\end{aligned}
\end{eqnarray*}
Thus we deduce from \eqref{eq-7.23} that
\begin{eqnarray*}
\begin{aligned}
\liminf\limits_{t\uparrow T^*}\big(\inf\limits_{x\in\mathbb{S}}\Gamma(t,x)\big)=-\infty \;\; {\rm as}\;\; t\rightarrow T^*.
\end{aligned}
\end{eqnarray*}
(2). If the initial data satisfies \eqref{blow-data-1b}, it then  follows that
\begin{eqnarray*}
\begin{aligned}
n_0(x_0)+&\;n_{\tau}(0,x_0)+\fr {k_1}{k_2}=\;\big(1+2bk_2u_{0,x}(x_0)\big)(\fr {k_1}{k_2}+\fr 1{m_0(x_0)})<0,
\end{aligned}
\end{eqnarray*}
and
\begin{eqnarray*}
\begin{aligned}
4b^2(k_1 m_0(x_0)+k_2)^2u_{0,x}^2(x_0)>1+\fr {2k_1}{k_2}m_0(x_0).
\end{aligned}
\end{eqnarray*}
A direct computation yields
\begin{eqnarray*}
n_{\tau}(0,x_0)=&\;\frac {2b\Gamma(0,x_0)}{m_0(x_0)}=2b(k_1+ \fr{k_2}{m(0,x_0)})u_{0,x}(x_0).
\end{eqnarray*}
Combining above expressions, we find
\begin{eqnarray*}
n_{\tau}^2(0,x_0)-n_0^2(x_0)-2\frac{k_1}{k_2}n_0(x_0)>0.
\end{eqnarray*}
and
\begin{eqnarray*}
\sqrt{n_{\tau}^2(0,x_0)-n_0^2(x_0)-2\frac{k_1}{k_2}n_0(x_0)}>-n_0(x_0)-n_{\tau}(0,x_0).
\end{eqnarray*}
So the equation
 \begin{eqnarray}\label{eq-7.24}
 \frac12 \Big(n_0(x_0)+n_{\tau}(0,x_0)+\fr {k_1}{k_2}\Big)z^2-\fr{k_1}{k_2}z
+\frac12 \Big(n_0(x_0)-n_{\tau}(0,x_0)+\fr{k_1}{k_2}\Big)=0
\end{eqnarray}
has one positive root
\begin{eqnarray*}\label{root-1}
z_1=\frac{\fr{k_1}{k_2}-\sqrt{n_{\tau}^2(0,x_0)-n_0^2(x_0)-2\frac{k_1}{k_2}n_0(x_0)}}{n_0(x_0)+n_{\tau}(0,x_0)+\frac{k_1}{k_2}} \end{eqnarray*}
and one negative root
\begin{eqnarray*}\label{root-1}
z_2=\frac{\fr{k_1}{k_2}+\sqrt{n_{\tau}^2(0,x_0)-n_0^2(x_0)-2\frac{k_1}{k_2}n_0(x_0)}}{n_0(x_0)+n_{\tau}(0,x_0)+\frac{k_1}{k_2}}. \end{eqnarray*}
It is then inferred from \eqref{eq-7.22} that
\begin{eqnarray*}
0\leq \frac 1{m(t,q(t,x_0))}\leq \frac12 e^{-\tau} \big(n_0(x_0)+n_{\tau}(0,x_0)+\fr {k_1}{k_2}\big) (e^{\tau}-z_1)(e^{\tau}-z_2),
\end{eqnarray*}
Note that $z_1>1$ and $z_2<0$.  Then one may find a time $0<T^*\leq t^*=b\ln z_1$ such that
\begin{eqnarray*}
m(t)\rightarrow +\infty \;\; {\rm as} \;\; t \rightarrow T^*.
\end{eqnarray*}
For any $t\leq T^*$, we have
\begin{eqnarray*}
\begin{aligned}
n_{\tau}(0,x_0)&\;+\fr {k_1}{k_2}(e^{\tau}-1)-n_0(x_0)\\
\leq &\;\fr {k_1}{k_2}\frac {\fr {k_1}{k_2}-\sqrt{n_{\tau}^2(0,x_0)-n_0^2(x_0)-2\frac{k_1}{k_2}n_0(x_0)}}{n_0(x_0)+n_{\tau}(0,x_0)+\frac{k_1}{k_2}}
-n_0(x_0)+n_{\tau}(0,x_0)-\frac{k_1}{k_2}\\
=&\; \frac{\sqrt{n_{\tau}^2(0,x_0)-n_0^2(x_0)-2\frac{k_1}{k_2}n_0(x_0)}\big(\sqrt{n_{\tau}^2(0,x_0)-n_0^2(x_0)-2\frac{k_1}{k_2}n_0(x_0)}-\fr {k_1}{k_2}\big)}{n_{\tau}(0,x_0)+n_0(x_0)+\fr{k_1}{k_2}}  < 0.
\end{aligned}
\end{eqnarray*}
Making use of \eqref{eq-7.23}, we deduce that
\begin{eqnarray*}
\begin{aligned}
\liminf\limits_{t\uparrow T^*}\left (\inf\limits_{x\in\mathbb{S}}\Gamma(t,x)\right )=-\infty \;\; {\rm as}\;\; t\rightarrow T^*.
\end{aligned}
\end{eqnarray*}
This completes the proof of the theorem.
 \end{proof}

\begin{remark} In the case of $C_1 > 0$, $C_2 \geq 0$, replacing $C_2$ by
\begin{eqnarray*}
\tilde{C}_2=(3k_2-2\mu_0 k_1)\mu_0^2+\frac {5\sqrt{3}}{3}k_2 \mu_0\mu_1-\frac 1{12}(9k_2+26k_1\mu_0)\mu_1^2,
\end{eqnarray*}
Theorem \ref{thm-7.2} still holds.
\end{remark}

\begin{remark} In the case of $C_1 > 0$ and $C_2 < 0$, Theorem \ref{thm-7.2}  holds  just by changing $C_2$ to
\begin{eqnarray*}
\tilde{C}_2=(3k_2-2\mu_0 k_1)(\mu_0+\fr {\sqrt{3}}{6}\mu_1)^2.
\end{eqnarray*}
\end{remark}

Recall that $\mu_0=\mu(u(t)) = \int_{\mathbb{S}}u(t,x)\ dx $ and $
m_0=(\mu-\partial_x^2)u_0$.

\begin{lem} {\cite{ce-1}} \label{lem-7.1}
Let $T>0$ and $v\in C^1([0,T);H^2(\mathbb{R})).$ Then for
every $t\in[0,T),$ there exists at least one point
$\xi(t)\in{\mathbb{R}}$ with
$$
I(t):= \underset {x\in
\mathbb{R}}\inf\left(v_x(t,x)\right)=v_x(t,\xi(t)).$$ The function
$I(t)$ is absolutely continuous on $(0,T)$ with
 \begin{equation*}\frac{dI(t)}{dt}=v_{tx}(t,\xi(t)),   \; a. \,  e. \, \mbox{on} \; (0,T).
  \end{equation*}
\end{lem}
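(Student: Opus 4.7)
The plan is to establish the lemma by first verifying pointwise existence of a minimizer $\xi(t)$, then proving a Lipschitz estimate that yields absolute continuity of $I$, and finally identifying $I'(t)$ at every point where it exists via two-sided comparison inequalities.

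First I would note that the hypothesis $v \in C^1([0,T); H^2(\mathbb{R}))$ gives $v_x(t,\cdot) \in H^1(\mathbb{R})$ and $v_{tx}(t,\cdot) \in H^1(\mathbb{R})$ for every $t$, and via the Sobolev embedding $H^1(\mathbb{R}) \hookrightarrow C_0(\mathbb{R})$ both functions are continuous and decay to $0$ at infinity. In particular $v_x(t,\cdot)$ attains its (nonpositive) infimum at some point $\xi(t) \in \mathbb{R}$, which supplies the first assertion. Moreover, by the same embedding $v_{tx}$ is bounded on $[0,T'] \times \mathbb{R}$ for every $T' < T$, a fact I will use repeatedly.

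Next, to obtain absolute continuity, I would derive two comparison inequalities. For $0 \le s < t < T$, using $v_x(s,\xi(s)) \le v_x(s,\xi(t))$ and $v_x(t,\xi(t)) \le v_x(t,\xi(s))$,
\begin{equation*}
\int_{s}^{t} v_{tx}(\tau,\xi(t))\,d\tau \;\le\; I(t) - I(s) \;\le\; \int_{s}^{t} v_{tx}(\tau,\xi(s))\,d\tau.
\end{equation*}
Both bounds are dominated by $|t-s|\,\|v_{tx}\|_{L^\infty([s,t]\times\mathbb{R})}$, so $I$ is locally Lipschitz, hence absolutely continuous on $(0,T)$.

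For the derivative formula, I would exploit the same two-sided bound at an arbitrary point $t_0$ of differentiability. Fix $x = \xi(t_0)$; because $v_{tx}(\cdot,x)$ is continuous in time at this fixed $x$ (it is the $x$-section of $v_{tx} \in C([0,T);C_0(\mathbb{R}))$), dividing the right-hand inequality with $s = t_0$ by $t - t_0 > 0$ and letting $t \downarrow t_0$ yields
\begin{equation*}
I'(t_0) \;\le\; v_{tx}(t_0, \xi(t_0)),
\end{equation*}
while dividing the left-hand inequality with $t = t_0$ by $t_0 - s > 0$ and letting $s \uparrow t_0$ yields the reverse inequality. Thus $dI/dt = v_{tx}(t,\xi(t))$ almost everywhere.

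The main delicacy I expect is the nonuniqueness of $\xi(t)$: different selections may give different values of $v_{tx}(t,\xi(t))$, so at first sight the conclusion looks ill-defined. The two-sided comparison saves the argument because it forces the common value at every point of differentiability, so any measurable selection of $\xi(t)$ works for the almost-everywhere identity. The only inputs really needed are (i) $v_x(t,\cdot) \in C_0$ to ensure attainment of the infimum, (ii) $v_{tx} \in L^\infty_{\mathrm{loc}}$ to run the Lipschitz estimate, and (iii) continuity of $v_{tx}(\cdot,x)$ in $t$ at fixed $x$ to pass to the limit inside the integrals; all three are immediate from $v \in C^1([0,T); H^2(\mathbb{R}))$.
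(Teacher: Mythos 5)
The paper offers no proof of this lemma: it is quoted verbatim from Constantin--Escher \cite{ce-1}, and your argument is essentially the proof given there --- attainment of the infimum via the Sobolev embedding $H^1(\mathbb{R})\hookrightarrow C_0(\mathbb{R})$, the two-sided comparison $v_x(s,\xi(s))\le v_x(s,\xi(t))$ and $v_x(t,\xi(t))\le v_x(t,\xi(s))$ giving a local Lipschitz bound on $I$, and the same squeeze at points of differentiability to identify $I'(t_0)=v_{tx}(t_0,\xi(t_0))$. Those last two steps are correct and complete as you present them, including your observation that the two-sided bound makes the conclusion independent of the selection of $\xi(t)$.

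There is one genuine (though easily repaired) gap, in the very first step. A continuous function on $\mathbb{R}$ that decays to zero at infinity need \emph{not} attain its infimum: if $v_x(t,\cdot)$ were strictly positive everywhere, its infimum would be $0$ and would not be attained. So ``$v_x(t,\cdot)\in C_0(\mathbb{R})$'' alone does not justify the existence of $\xi(t)$, and your parenthetical ``(nonpositive)'' is exactly the assertion that needs proof. The missing observation is that $v(t,\cdot)\in H^2(\mathbb{R})\hookrightarrow C_0(\mathbb{R})$ forces $\int_{\mathbb{R}}v_x(t,x)\,dx=v(t,+\infty)-v(t,-\infty)=0$, whence $v_x(t,\cdot)$ is either identically zero (infimum attained everywhere) or takes a strictly negative value at some $x_1$; in the latter case the sublevel set $\{x:\ v_x(t,x)\le v_x(t,x_1)\}$ is closed, nonempty, and bounded (because $v_x\to 0$ at infinity), so the continuous function $v_x(t,\cdot)$ attains its minimum there, and that minimum is the global infimum. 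With this one line inserted, your proof is complete and coincides with the one in the cited reference.
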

The following result demonstrates that wave-breaking could depend on the infimum of $u_x$.
\begin{thm} \label{7-blowup-3}
Assume $k_1 >0$, $k_2 >0$ and $m_0>0$. Let $u_0 \in H^s,  s > 5/2$ and $T^*>0$ be the maximal time of
existence of the corresponding solution $u(t,x)$ to (\ref{cauchy})
with the initial data $u_0$. If $C_2>0$ (defined in Lemma \ref{lem-blow-2})and
$$ \inf_{x \in \mathbb{S}}  (u_{0,x}(x)) <-\sqrt{\dfrac{2k_1\mu_0\mu_1(\sqrt{3}\mu_0-3\mu_1)+k_2\mu_1(2\sqrt{3}\mu_0-3\mu_1)}
{3k_2+6\mu_0k_1}}:\equiv-K,$$
then the corresponding
solution $u(t,x)$ to (\ref{cauchy}) blows up in finite time  $T^*$ with
$$0<T^*\le -\dfrac{4}{(k_2+2k_1\mu_0)(\inf\limits_{x \in\mathbb{S}} u_{0,x}(x)+\sqrt{-K\inf\limits_{x \in \mathbb{S}} u_{0,x}(x)})},$$
such that
$$\liminf\limits_{t\uparrow T^*}\left (\inf\limits_{x\in\mathbb{S}}(u_x(t,x))\right )=-\infty.$$
\end{thm}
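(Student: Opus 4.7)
The plan is to combine the pointwise differentiation principle in Lemma \ref{lem-7.1} with the $u_x$-transport equation \eqref{blow-1-0-2} from Lemma \ref{prop-blow-1}, exploiting that the transport term vanishes at the spatial minimum of $u_x$. By the density argument used in the proof of Theorem \ref{blow-up-1} together with the blow-up criterion Theorem \ref{cor-blow-criterion-2}, it suffices to assume $s\geq 3$. First I would define $I(t):=\inf_{x\in\mathbb{S}}u_x(t,x)$ and, via Lemma \ref{lem-7.1} applied to $v=u$, pick $\xi(t)\in\mathbb{S}$ with $I(t)=u_x(t,\xi(t))$ and $I'(t)=u_{xt}(t,\xi(t))$ for a.e.\ $t$. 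Since $u_x(t,\cdot)$ attains its minimum at $\xi(t)$, $u_{xx}(t,\xi(t))=0$, and evaluating \eqref{blow-1-0-2} at $(t,\xi(t))$ kills the transport contribution $(k_1(2\mu_0 u-u_x^2)+k_2 u)P_x$.

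Next I would apply Lemma \ref{sobolev-3} to obtain $|u(t,\xi(t))-\mu_0|\leq \tfrac{\sqrt{3}}{6}\mu_1$. Inserting this uniform bound into the right-hand side of \eqref{blow-1-0-2} and collecting terms in powers of $I$ yields the Riccati-type inequality
\[
\frac{dI}{dt}\;\leq\;-\Bigl(k_1\mu_0+\tfrac{k_2}{2}\Bigr)I^2+C_2\;=\;-\beta(I^2-K^2),
\]
where $\beta=(2k_1\mu_0+k_2)/2$, $C_2$ is the constant from Lemma \ref{lem-blow-2}, and a direct computation confirms that $K^2=C_2/\beta$ coincides with the expression inside the square root in the hypothesis. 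Since $C_2>0$ and $I(0)=\inf u_{0,x}<-K$, the right-hand side is strictly negative at $t=0$, so $I$ is strictly decreasing and $I(t)\leq a:=I(0)$ on $[0,T^*)$; in particular $I(t)^2\geq a^2$.

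The last step converts this Riccati inequality into the algebraic blow-up time in the theorem. For any $I\leq a<-K$ the elementary identity
\[
(I-K)(I+K)-\tfrac12\bigl(I-\sqrt{-Ka}\bigr)\bigl(I+\sqrt{-Ka}\bigr)=\tfrac12\bigl[(I^2-a^2)+(a-2K)(a+K)\bigr]
\]
is non-negative, because $I^2\geq a^2$ and, under $a<-K$, both $a-2K$ and $a+K$ are negative. Hence $dI/dt\leq -\tfrac{\beta}{2}(I-\sqrt{-Ka})(I+\sqrt{-Ka})$. Substituting $Q(t):=I(t)+\sqrt{-Ka}$---which is negative since $a<-K$ forces $\sqrt{-Ka}<-a$ and hence $Q(0)=a+\sqrt{-Ka}<0$---and writing $I-\sqrt{-Ka}=Q-2\sqrt{-Ka}$, the cross term $\beta\sqrt{-Ka}\,Q$ is non-positive and may be dropped, leaving the clean Riccati $Q'\leq -\tfrac{\beta}{2}Q^2$. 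Integrating $-d(1/Q)\leq -\tfrac{\beta}{2}\,dt$ forces $Q(t)\to-\infty$, hence $I(t)\to-\infty$, at or before
\[
T^*\;\leq\;-\frac{2}{\beta(a+\sqrt{-Ka})}\;=\;-\frac{4}{(k_2+2k_1\mu_0)(a+\sqrt{-Ka})}.
\]
Theorem \ref{cor-blow-criterion-2} then yields $T^*<\infty$ with the announced wave-breaking conclusion.

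The main obstacle lies in the third paragraph: producing a quadratic-in-$I$ lower bound for $(I-K)(I+K)$ that delivers the stated algebraic blow-up time rather than the sharper but logarithmic bound $\tfrac{1}{2K\beta}\ln\tfrac{|a|+K}{|a|-K}$ obtained by solving the Riccati exactly. The key observation, and the place where the hypothesis $a<-K$ enters non-trivially, is that this hypothesis is precisely what makes $(a-2K)(a+K)\geq 0$, which in turn justifies the weakened factorization $(I-K)(I+K)\geq \tfrac12(I-\sqrt{-Ka})(I+\sqrt{-Ka})$ along the trajectory and reduces the argument to a standard Riccati blow-up in the variable $Q=I+\sqrt{-Ka}$.
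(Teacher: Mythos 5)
Your proposal is correct and follows essentially the same route as the paper: apply Lemma \ref{lem-7.1} to $w(t)=\inf_{x}u_x(t,x)$, use $u_{xx}(t,\xi(t))=0$ in \eqref{blow-1-0-2} together with Lemma \ref{sobolev-3} to obtain the Riccati inequality $\frac{dw}{dt}\le -(k_1\mu_0+\frac12 k_2)(w^2-K^2)$, and conclude blow-up from $w(0)<-K$. The only difference is that you additionally work out, via the factorization $(I-K)(I+K)\ge \frac12(I-\sqrt{-Ka})(I+\sqrt{-Ka})$, the explicit algebraic bound on the blow-up time stated in the theorem, a computation the paper dismisses with ``it can be easily deduced''; your identity and sign checks are correct.
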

\begin{proof}
The proof of the theorem is approached by applying Lemma \ref{lem-7.1}.
In fact, if we define $w(t)=u_x(t,\xi(t))=\inf\limits_{x\in\mathbb{S}}[u_x(t,x)]$,
then for all $t\in[0,T^*)$, $u_{xx}(t,\xi(t))=0$. Thus one finds that
$$
\frac{d}{dt} w(t)\le-(\frac1{2}k_2+k_1\mu_0)\big(w^2(t)-K^2\big).$$ Applying the assumptions of
Theorem \ref{7-blowup-3}, it can be easily deduced that if
$$w(0)<-\sqrt{\dfrac{2k_1\mu_0\mu_1(\sqrt{3}\mu_0-3\mu_1)+k_2\mu_1(2\sqrt{3}\mu_0-3\mu_1)}
{3k_2+6\mu_0k_1}},$$ then $T^*$ is
finite and $\liminf\limits_{t\uparrow T^*}\left(\inf\limits_{x\in\mathbb{S}}u_x(t,x)\right)=-\infty$.
This completes the proof of the theorem.
\end{proof}
The following result shows that wave-breaking may depend on the infimum of $mu_x$.
\begin{thm}\label{7-thm-blowup-result-1}
Suppose $u_0\in H^s({\Bbb S}),\, s > 5/2$ with $k_2>0,\;k_1>0$ and
$ C_2>0$ (defined in Lemma \ref{lem-blow-2}). Let $T^*>0$ be the maximal time of existence of the corresponding solution $u(t,x)$ to
\eqref{cauchy} with the initial data $u_0$. Assume
that for some $x_0 \in {\Bbb S}$ with $m_0(x_0)>0$ and
\begin{equation}\label{blow-4-0}
 u_{0,x}(x_0) <-\frac{\sqrt{C_2(k_1m_0(x_0)+k_2)}}{
 k_1m_0(x_0)}.
\end{equation}
Then the solution $u(t, x)$ blows up at finite time
 \begin{equation}\label{blow-4-1}
T^* \leq t^{\ast}=-\frac{k_1m_0(x_0)u_{0,x}(x_0)}{C_2(k_1m_0(x_0)+k_2)}
-\sqrt{\left(\frac{k_1m_0(x_0)u_{0,x}(x_0)}
{C_2(k_1m_0(x_0)+k_2)}\right)^2-\frac{1}{C_2(k_1m_0(x_0)+k_2)}}.
\end{equation}
Moreover, when $T^*=t^{\ast}$, the following estimate of the blow-up
rate holds
\begin{equation}\label{blow-4-1-a}
\begin{split}
&\liminf_{t\uparrow T^{*}}\left((T^*-t)\,\inf_{x\in {\Bbb S}}mu_x(t,
x)\right) \leq -\frac1{2k_1}.
\end{split}
\end{equation}
\end{thm}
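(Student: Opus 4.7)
The strategy is to track $M(t)\equiv(mu_x)(t,q(t,x_0))$ along the characteristic $q(t,x_0)$ defined by \eqref{flow-1}. This converts the PDE into a coupled Riccati-type ODE system in $(M,m)$ along the flow; an integrating-factor substitution then collapses it to a single scalar linear second-order differential inequality whose explicit quadratic supersolution produces both the blow-up bound $T^*\leq t^*$ and, via its logarithm, the rate \eqref{blow-4-1-a}.

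\textbf{Step 1 (coupled ODIs and integrating-factor reduction).} By Lemma \ref{lem-flow-1}, $\dot m=-2M(k_1m+k_2)$ along $q(t,x_0)$, and $m>0$ there by Remark \ref{rmk-flow-1} (under the standing sign hypothesis $m_0\geq 0$ implicit in the validity of $C_2$). Evaluating \eqref{blow-1-0-1} along the flow and writing $u_x=M/m$, the cubic cross term becomes $-\tfrac{5}{2}k_2u_xM=-\tfrac{5}{2}k_2M^2/m\leq 0$, while the two $m$-linear terms are bounded by $C_2\cdot m$ exactly as in the proof of Lemma \ref{lem-blow-2} (using $|u-\mu_0|\leq\tfrac{\sqrt 3}{6}\mu_1$ from Lemma \ref{sobolev-3}). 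Hence
\[
\dot M\leq -2k_1M^2+C_2m,\qquad \dot m=-2M(k_1m+k_2).
\]
Set
\[
W(t)\equiv\frac{k_1m_0(x_0)+k_2}{k_1m(t,q(t,x_0))+k_2},\qquad W(0)=1,\quad W>0 \text{ on }[0,T^*).
\]
A direct computation gives $\dot W=2k_1MW$ and $\ddot W=2k_1\dot M\,W+4k_1^2M^2W$. Multiplying the Riccati inequality by $2k_1W>0$ produces the exact cancellation of the quadratic $M^2W$ term, leaving $\ddot W\leq 2k_1C_2mW$. The elementary bound $mW=m(k_1m_0(x_0)+k_2)/(k_1m+k_2)\leq m_0(x_0)+k_2/k_1$ then gives the scalar linear ODI $\ddot W\leq 2C_2(k_1m_0(x_0)+k_2)$.

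\textbf{Step 2 (quadratic supersolution, blow-up time, and rate).} Since $\dot W(0)=2k_1M_0$ with $M_0=m_0(x_0)u_{0,x}(x_0)<0$, integrating twice yields
\[
0<W(t)\leq f(t)\equiv 1+2k_1M_0\,t+C_2(k_1m_0(x_0)+k_2)\,t^2\quad\text{on }[0,T^*).
\]
Hypothesis \eqref{blow-4-0} is exactly the positive-discriminant condition $(k_1M_0)^2>C_2(k_1m_0(x_0)+k_2)$, so $f$ has two distinct positive roots, the smaller being the $t^*$ of \eqref{blow-4-1} (by Vieta). Since $f(t^*)=0$ while $W>0$ on $[0,T^*)$, one must have $T^*\leq t^*$. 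For the rate when $T^*=t^*$, integrating the identity $\dot W=2k_1MW$ gives $\int_0^tM\,ds=\tfrac{1}{2k_1}\ln W(t)\leq\tfrac{1}{2k_1}\ln f(t)$, and near the simple root $t^*$ the condition $f'(t^*)<0$ yields $\ln f(t)=\ln(T^*-t)+O(1)$. Suppose for contradiction $\liminf_{t\uparrow T^*}(T^*-t)M(t,q(t,x_0))>-\tfrac{1}{2k_1}$; then there exist $\delta\in(0,\tfrac{1}{2k_1})$ and $t_0<T^*$ with $M(t)\geq-(\tfrac{1}{2k_1}-\delta)/(T^*-t)$ on $[t_0,T^*)$, so $\int_0^tM\,ds\geq(\tfrac{1}{2k_1}-\delta)\ln(T^*-t)+O(1)$. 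Subtracting the upper bound yields $-\delta\ln(T^*-t)\leq O(1)$, which contradicts $\ln(T^*-t)\to-\infty$. The inequality $\inf_{x\in\mathbb S}(mu_x)(t,x)\leq M(t,q(t,x_0))$ then delivers \eqref{blow-4-1-a}.

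\textbf{Main obstacle.} The genuine difficulty is the coupling: the correction $+C_2m$ in $\dot M$'s Riccati inequality blows up precisely when $M\to-\infty$ (since $\dot m=-2M(k_1m+k_2)>0$ drives $m$ upward as $M$ becomes very negative), so neither variable alone admits a closable scalar Riccati. The nontrivial ingredient is identifying the integrating factor $W\propto(k_1m+k_2)^{-1}$, designed so that $\dot W/W=2k_1M$ linearizes the $m$-equation while the worst term $4k_1^2M^2W$ in $\ddot W$ is annihilated exactly by $2k_1W\cdot(-2k_1M^2)$. Once $W$ is in hand, matching its quadratic supersolution to the theorem's $t^*$ is routine, and the sharp rate $-1/(2k_1)$ emerges from the exponent relation $W=\exp(2k_1\int_0^tM\,ds)$.
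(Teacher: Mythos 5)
Your proof is correct and follows essentially the same route as the paper: your integrating factor $W=(k_1m_0(x_0)+k_2)/(k_1m+k_2)$ is exactly the paper's auxiliary quantity $1/\overline{w}$ (with $\overline{w}=2(m+k_2/k_1)$) up to the constant factor $2(k_1m_0(x_0)+k_2)/k_1$, and your linear second-order inequality $\ddot W\le 2C_2(k_1m_0(x_0)+k_2)$ with quadratic supersolution $f$ reproduces the paper's bound $0\le 1/\overline{w}\le\frac12 k_1C_2(t-t^*)(t-t_*)$ and the same identification of $t^*$. One genuine improvement: the paper's written proof stops after establishing $T^*\le t^*$ and never actually verifies the rate estimate \eqref{blow-4-1-a}, whereas your contradiction argument based on $W=\exp\bigl(2k_1\int_0^t M\,ds\bigr)\le f(t)$ and the simple-zero asymptotics $\ln f(t)=\ln(T^*-t)+O(1)$ supplies a correct and complete proof of that remaining claim.
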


\begin{proof} As in the proof of Theorem \ref{thm-blow-criterion-2}, it suffices
to consider the case $s\geq3$.
Thanks to \eqref{blow-1-0-1} and \eqref{flow-1}, we have
 \begin{equation*}
\frac{d}{dt}M(t, q(t, x_0))\leq -2k_1 M^2(t, q(t, x_0))+C_2 m(t,q(t, x_0)).
\end{equation*}
And recall that
 \begin{equation*}
\frac{d}{dt}m(t, q(t, x_0))= -2M(t, q(t, x_0))(k_1m+k_2).
\end{equation*}
Denote $\overline{w}(t):=2\big(m(t,q(t, x_0))+k_2/k_1\big)$, then
 \begin{align*}
\frac{d}{dt}M(t, q(t, x_0))&\leq -2k_1 M^2(t, q(t, x_0))+C_2 \left(\frac{\overline{w}}{2}-\frac{k_2}{k_1}\right)\\
&\leq -2k_1 M^2(t, q(t, x_0))+ \frac{C_2}{2}\overline{w}.
\end{align*}
Applying the above two equations, we deduce that
\begin{align*}
\frac{d}{dt}\left(\frac{1}{\overline{w}(t)^2}\right.\left.\frac{d}{dt}\overline{w}(t)\right)
&=-2k_1\frac{d}{dt}\left(\frac{1}{\overline{w}(t)}M\right)\\
&=\frac{2k_1}{\overline{w}(t)^2}\left(-\overline{w}(t)\frac{d}{dt}M(t)
+M(t)\frac{d}{dt}\overline{w}(t)\right)\geq -k_1 C_2.
\end{align*}
Integrating it from $0$ to $t$ leads to
\begin{equation*}
\frac{1}{\overline{w}(t)^2}\frac{d}{dt}{\overline{w}(t)} \geq
C_3-k_1 C_2t,
\end{equation*}
with
\begin{equation*}
C_3:=\frac{\overline{w}(0)^{'}}{\overline{w}(0)^2}=-\frac{2k_1M(0)}{\overline{w}(0)}
=-\dfrac{k_1^2(m_0u_{0,x})(x_0)}{k_1m_0(x_0)+k_2}.
\end{equation*}
and hence
\begin{align*}
\frac{1}{\overline{w}(t)}&\leq
\frac1{2}\left(k_1C_2t^2-2C_3t+\frac{k_1}{k_1m_0(x_0)+k_2}\right).
\end{align*}
Let \begin{equation*}
t^{\ast}:=\frac{C_3}{k_1C_2}-
\sqrt{\left (\frac{C_3}{k_1C_2} \right
)^2-\frac{1}{C_1(k_1m_0(x_0)+k_2)}}
\end{equation*}
and
\begin{equation*}
 t_{\ast}:=\frac{C_3}{k_1C_2}+\sqrt{\left (\frac{C_3}{k_1C_2} \right
)^2-\frac{1}{C_2(k_1m_0(x_0)+k_2)}}\ .
\end{equation*}
Thus,
\begin{equation*}
0 \leq \frac{1}{\overline{w}(t)}\leq \frac1{2}k_1C_2(t-t^{\ast})(t-t_{\ast}).
\end{equation*}
Therefore,
\begin{equation*}
\inf_{x\in {\Bbb S}} mu_x(t, x) \leq M(t)\longrightarrow  -\infty, \quad
\mbox{as} \quad t \longrightarrow  T^* \leq t^{\ast},
\end{equation*}
This implies that the solution $u(t,x)$ blows up at the time $T^*$. \end{proof}

\noindent {\bf Acknowledgments.} The work of Fu is partially
  supported by the NSF-China grant-11001219
and the Scientific Research Program Funded by Shaanxi Provincial
Education Department grant-2010JK860. The work of Liu is partially
supported by the NSF grant DMS-1207840. The work of Qu is supported in part by the
NSF-China for Distinguished Young Scholars grant-10925104.

\end{document}